\newtheorem{thm}{Theorem}[section]
\newtheorem{cor}[thm]{Corollary}
\newtheorem{lem}[thm]{Lemma}
\newtheorem{prop}[thm]{Proposition}
\newtheorem*{propA}{\bf Proposition A (Finiteness of fixed ray pairs)}
\newtheorem*{mainthm}{\bf Main Theorem (Separation Theorem)}
\newtheorem*{thmB}{\bf Theorem B (Global counting)}
\newtheorem*{thmC}{\bf Theorem C (Separation Theorem for periodic rays)}
\newtheorem*{corD}{\bf Corollary D}
\newtheorem*{corE}{\bf Corollary E (Hidden components of Siegel disks)}
\newtheorem{claim}[thm]{Claim}
\theoremstyle{definition}
\newtheorem{defn}[thm]{Definition}
\newtheorem{rem}[thm]{Remark}
\newtheorem*{rem*}{Remark}
\numberwithin{equation}{section}
\renewcommand{\Im}{\operatorname{Im}}
\renewcommand{\Re}{\operatorname{Re}}
\newcommand{\diam}{\operatorname{diam}}
\newcommand{\eucl}{\operatorname{eucl}}
\newcommand{\dist}{\operatorname{dist}}
\newcommand{\ind}{\operatorname{Ind}}
\renewcommand{\AA}{{\cal A}}
\newcommand{\BB}{{\cal B}}
\newcommand{\FF}{{\cal F}}
\newcommand{\GG}{{\cal G}}
\newcommand{\PP}{{\cal P}}
\renewcommand{\SS}{{\cal S}}
\newcommand{\TT}{{\cal T}}
\newcommand{\C}{{\mathbb C}}
\newcommand{\D}{{\mathbb D}}
\newcommand{\Hyp}{{\mathbb H}}
\newcommand{\N}{{\mathbb N}}
\newcommand{\R}{{\mathbb R}}
\newcommand{\Z}{{\mathbb Z}}
\newcommand{\ra}{\rightarrow}
\newcommand{\ov}{\overline}
\renewcommand{\epsilon}{\varepsilon}
\renewcommand{\phi}{\varphi}
\newcommand{\BBt}{\widehat{\BB}}
\newcommand{\Bt}{\hat{\BB}}
\newcommand{\dt}{{\widetilde{\delta}}}
\newcommand{\dtj}{{\widetilde{\delta_j}}}
\newcommand{\Sti}{{\widetilde{S_i}}}
\newcommand{\St}{{\widetilde{S}}}
\newcommand{\TTt}{{\widetilde{\TT}}}
\newcommand{\ft}{\widetilde{f}}
\newcommand{\Ft}{{\widetilde{F}}}
\newcommand{\Tt}{{\widetilde{T}}}
\newcommand{\Tta}{{\widetilde{T}_{\alpha,i}}}
\newcommand{\Ttb}{{\widetilde{T}_{\beta,i}}}
\newcommand{\Ht}{{\widetilde{H}}}
\newcommand{\st}{{\widetilde{s}}}
\newcommand{\zt}{{\widetilde{z}}}
\newcommand{\Ta}{{T_{\alpha}}}
\newcommand{\Dbar}{{\overline{D}}}
\newcommand{\deltat}{{\widetilde{\delta}}}
\newcommand{\s}{{s}}
\newcommand{\FFa}{{\FF_\alpha}}
\newcommand{\gammah}{\widehat{\gamma}}
\newcommand{\sigmah}{\widehat{\sigma}}
\newcommand{\Gammah}{\widehat{\Gamma}}
\newcommand{\chat}{\widehat{\C}}
\newcommand{\Vhat}{{\widehat{V}}}
\newcommand{\GV}{{\Gamma_\Vhat}}
\newcommand{\dap}{{\delta^{+}_{\alpha}}}
\newcommand{\dam}{{\delta^{-}_{\alpha}}}
\newcommand{\dapm}{{\delta^{\pm}_{\alpha}}}
\newcommand{\damp}{{\delta^{+}_{\alpha-1}}}
\newcommand{\Pap}{{P^{+}_{\alpha}}}
\newcommand{\Pam}{{P^{-}_{\alpha}}}
\newcommand{\Pamp}{{P^{+}_{\alpha-1}}}
\newcommand{\ftmF}{{\ft}^{-1}_{\Ft}}
\newcommand{\ftmT}{{{\ft}^{-1}_{\Tt}}}
\renewcommand{\hat}{\widehat}
\renewcommand{\ss}{\scriptsize}
\renewcommand{\b}{\color{blue}}
\newcommand{\Gt}{\tilde{G}}
 \title{A separation theorem for entire transcendental maps}
\author{\small Anna Miriam Benini \thanks{Partially supported by the EU network MRTN-CT-2006-035651 CODY}\\  
\small CRM Ennio de Giorgi\\
\small Piazza dei Cavalieri 3 \\   
\small 56100 Pisa, Italy\\ 
\small {\tt ambenini$@$gmail.com} 
\and 
\small N\'uria Fagella\thanks{Partially supported by the catalan grant 2009SGR-792, by the spanish grants MTM-2008-01486, MTM2006- 05849 Consolider (including a FEDER contribution) and MTM2011-26995-C02-02, and by the european network MRTN-CT-2006-035651-2-CODY. }\\   
\small Dept.~de Mat.~Aplicada i An\`alisi\\ 
\small Univ.~de Barcelona, Gran Via 585 \\ 
\small 08007 Barcelona, Spain\\  
\small {\tt fagella$@$maia.ub.es} 
}  
\begin{document} 

\maketitle  
\begin{abstract}
 We study the distribution of periodic points for a wide class of
maps, namely entire transcendental functions of finite order and  with
bounded set of singular values,  or compositions thereof.  Fix $p\in\N$ and assume that 
all dynamic rays which are invariant under $f^p$  land. An interior $p$-periodic point  is a  fixed point of $f^p$  which is not the landing point of any periodic ray invariant under $f^p$.  
Points belonging to  attracting, Siegel  or  Cremer cycles
are  examples of interior periodic points. 
For functions as above, we show that rays which are invariant under $f^p$, together with their landing points, separate the plane into finitely many regions, each containing {\em exactly} one interior $p-$periodic point or one parabolic immediate basin invariant under $f^p$.
This result generalizes the Goldberg-Milnor Separation Theorem 
 for polynomials \cite{GM}, and  has several corollaries.  
 It follows, for example,  that two periodic Fatou components can always be separated by a pair of periodic rays landing together; that there cannot be Cremer
points on the boundary of Siegel disks; that  
``hidden components'' of a bounded Siegel disk have to be either wandering domains or preperiodic to the Siegel disk itself; or that there are only finitely many non-repelling cycles of any given period, regardless of the number of singular values. 
\end{abstract}

\section{Introduction}
 
 Given a holomorphic map $f:\C \ra \C$, we are interested in the dynamical system generated by the iterates of $f$. In this setup, there is a dynamically meaningful partition of the phase space into two completely invariant subsets: the {\em Fatou set}, where the dynamics are  in some sense stable, and the {\em Julia set}, where they are chaotic. More precisely, the Fatou set is defined as the open set 
\[
\FF(f):=\left\{ z\in\C; \{f^n\}\text{ is normal in a neighborhood of $z$ }\right\},
\] 
and the {\em Julia set} $J(f)$ as its complement. Another dynamically interesting set is the set of escaping points or  {\em escaping set} 
\[
I(f):=\{z\in\C; \ f^n(z)\ra \infty, \text{\ as $n\to \infty$}\}.
\]
The relation between them is that, in general, $J(f)=\partial I(f)$, although for some classes of functions $I(f)\subset J(f)$ (\cite{EL}) and hence  $J(f)=\overline{I(f)}$.

In this paper we are mainly concerned with {\em entire transcendental} maps (abbreviated transcendental maps), that is those entire maps for which infinity is an essential singularity.  There are several important differences between the dynamics of  transcendental maps and that of polynomials, coming from the  very different behavior of iterates in a neighborhood of infinity.  For example, while  the Julia set of polynomials is always a compact set disjoint from $I(f)$,  its analogue for transcendental maps is always unbounded and may contain points of $I(f)$.

Another relevant difference concerns the singularities of the inverse function, which always play a crucial role in holomorphic dynamics. For polynomials (or rational maps) all branches of $f^{-1}$ are well defined in a neighborhood of any point, with the exception of {\em critical values}, that is, $v=f(c)$ where $c$ is a  zero of $f'$ (i.e. a {\em  critical point}). If $f$ is entire, one more type of isolated singularity of $f^{-1}$ is allowed, namely the {\em asymptotic values}, or points $a\in\C$ for which there is a curve $\gamma(t)$ such that $|\gamma(t)|\ra\infty$ as $t\ra\infty$, and $f(\gamma(t))\ra a$ as  $t\ra\infty$. Informally, asymptotic values have some of their preimages at infinity.

A holomorphic function is a covering outside the set $S(f)$ of \emph{singular values} of $f$, where
 \[
 S(f):=\overline{\{z\in\C;  \text{\ $z$ is an asymptotic or critical value for $f$} \}}.
 \]
Observe that rational maps, and in particular polynomials,  have a finite number of singular values while transcendental maps may have infinitely many such singularities. This is in fact part of the reason why  
for polynomials, every connected component of the Fatou set (also called {\em Fatou component})  is preperiodic to the basin of an   attracting or  parabolic  periodic point, or to a periodic rotation domain (Siegel disk). Instead, transcendental maps allow for additional types of Fatou components like {\em wandering components} (that is non-preperiodic)  or {\em Baker domains} (i.e. periodic  components for which all their points tend to infinity under iteration).  There exist certain natural classes of transcendental maps for which the dynamics is better understood. One such class is the Eremenko-Lyubich class
\[
\BB=\{f:\C \to \C \text{\ entire transcendental; $S(f)$ is bounded}\}.
\] 
For functions in class $\BB$, the escaping set is a subset of the Julia set (see \cite{EL}, Theorem 1) and therefore there are neither  Baker domains nor wandering domains in which the iterates tend to infinity. Recently (\cite{Bi}) has constructed an example of function in class $\BB$ with wandering domains whose orbits are unbounded but do not tend to infinity; it is still unknown in general whether there can be functions with wandering domains which have bounded orbits. 
If $S(f)$ is finite, then $f$ is said to be of {\em finite type} and has no wandering domains of any kind (\cite{EL}, \cite{GK}). 

Recall that a function is of \emph{finite order} if $\log\log|f(z)|$ is of the order of $\log|z|$ as $|z|\ra\infty$. For example, $f(z)=e^{z^p}$ has order $p$.   In this paper we restrict   to the class 
\[
\BBt=\{f=g_1\circ{\dots} \circ g_k; \text{\ $k\geq1$, $g_i\in\BB$ and $g_i$ has finite order}\}.
\]
Observe that class $\BBt$ is contained in $\BB$ and  is closed under composition, as is $\BB$, while the class of functions of finite order is not. 

  The reason why we restrict ourselves to class $\BBt$, is because it is the most general (natural) class for which \emph{(dynamic) rays} or {\em hairs} have been proven to exist. Informally, a ray is a maximal unbounded injective curve  $g:(0,\infty)\to I(f) \setminus C(f)$, where $C(f)$ is the set of critical points of $f$,  such that all its iterates are also unbounded curves with the same properties (see Section~\ref{Tools} for a precise definition).  With this definition, rays are always pairwise disjoint. Rays which end at a critical point (or an iterated preimage thereof) but which are not maximal in $I(f)$ are called {\em broken}. We say that a ray {\em lands} at $z_0\in\C$ if  it is not broken and $g(t)\to z_0$ as $t\to 0$.  Broken rays do not land but a ray can land at a critical point. Observe that the image of a landing ray is a landing ray. 
  
 For polynomials {the assumption that there are no broken rays is equivalent to the assumption that the Julia set is connected; in this case} rays foliate the attracting basin of infinity and belong to the Fatou set (see for example \cite{DH}, \cite{Mi}). They are in one to one correspondence with straight rays in the complement of $\ov{\D}$ via B\"ottcher coordinates,  and therefore 
their dynamics is governed by $\theta \mapsto d \theta$, where $\theta$ is the angle that identifies the ray and $d$ is the degree of the polynomial (recall that a polynomial behaves like $z\mapsto z^d$ in a neighborhood of infinity). 
  
 For transcendental maps  in class $\BB$, the situation is quite different since rays belong to the Julia set.  The existence of  rays, and their organization with respect to some symbolic dynamics  was settled initially in \cite{DT} for functions of finite type (with some extra technical condition). Recently in \cite{R3S}, these results have been extended to the class $\BBt$ (in fact, to a more general but less natural class than $\BBt$). We refer to Section~\ref{Tools} for background and precise statements. Of special importance for us are the {\em $p-$periodic} rays, i.e., rays that are invariant under $f^p$.  As it is the case  for periodic points, $p$ is called the {\em period} of the ray. Unless otherwise specified periods are never assumed to be exact. When the period is 1, the ray is called {\em invariant} or {\em fixed}.   If two $p-$periodic rays land together at a common point,  the curve which consists of the two rays and their common landing point is called a \emph{ray pair} (of period $p$). In contrast, we say that a $p-$periodic ray {\em lands alone} if its landing point is not the landing point of any other ray  of the same period. Observe that, although it is expected to be impossible,  it is not yet known in general  whether rays with different (exact) periods can land together at the same point (see e.g. \cite{BL} for a proof that this cannot happen in the exponential case). Therefore  a priori the concept of landing alone depends on the period $p$.

Our goal in the present work is to prove some results concerning the relation between periodic rays, their landing points and the distribution of some special periodic points in the complex plane. More precisely, a $p-$periodic point is called an \emph{interior $p-$periodic point} if there are no $p-$periodic rays landing at it. 

Periodic points which are neither repelling nor parabolic (hence (super)attracting, Siegel or Cremer) are necessarily interior periodic points. However, repelling or parabolic $p-$periodic points  can be interior $p-$periodic points as well, for example if they are landing points only of rays which are not fixed by $f^p$, or if they are not landing points of any ray. 

We first restrict the attention to fixed points and fixed rays.  As opposed to polynomials, functions in class $\BB$ in general have infinitely many of each. We assume that all fixed rays land in $\C$, and in fact this will be a standing assumption for most of the  paper. 
In the polynomial case, it is enough to assume that the Julia set is connected  to ensure that all fixed rays land (in fact that all periodic rays land).   For transcendental functions, there is no known necessary and sufficient condition to show that fixed rays  land. There are however many cases in which this can be shown, for example if the postsingular set 
\[
\PP(f):=\ov{\bigcup_{s\in\SS(f),n\in\N}\{f^n(s)\}}
\] 
does not intersect the set of fixed rays (see {\cite{den,Fa,Rel}}).

Let $\Gamma$ be the graph formed by the fixed rays of $f$ together with their endpoints. A connected  component of $\C\setminus \Gamma$ is called a \emph{basic region} for $f$. 

Let us observe some facts which are obvious for polynomials but {not even true} for transcendental maps. 
A polynomial of degree $d$ always has $d-1$ fixed rays (fixed points of $\theta \mapsto d \theta$) and $d$ fixed points, counted with multiplicity. This means that  there is always at least one interior fixed point, or one multiple fixed point, even if all fixed rays land alone (this would be the case with only one basic region). If some fixed rays land together, then more fixed points become multiple or otherwise they become interior as they cannot be landing points of any fixed ray. This elementary counting, and the fact that there are only finitely many basic regions,  is possible  because  of the finiteness of the degree. In the transcendental case there are infinitely many fixed points and infinitely  many fixed rays, so a priori one might have infinitely many fixed ray pairs, basic regions and interior fixed points. Our first simple, but rather surprising result is the following proposition.

\begin{propA}
Let $f\in \Bt$. Then all but finitely many fixed rays land {alone} at repelling fixed points. Consequently, {if all fixed rays land}, there are only finitely many fixed ray pairs and finitely many basic regions.
\end{propA}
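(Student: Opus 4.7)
The plan is to split the assertion into two finiteness claims: (a) only finitely many fixed rays land at non-repelling fixed points, and (b) only finitely many fixed rays share their landing point with another fixed ray.

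For (a), landing points of rays lie in $J(f)$ (since rays live in $I(f) \subset J(f)$ for $f \in \BB$), so non-repelling landing points must be parabolic or Cremer. Each parabolic fixed point supports only finitely many fixed rays (bounded by the petal count), and each parabolic or Cremer fixed point requires an associated singular orbit (a singular value attracted to a petal, or a singular orbit accumulating on the fixed point). Since $S(f)$ is bounded in class $\widehat{\BB}$, a Fatou--Shishikura/Epstein-type bound adapted to class $\BB$ produces finitely many non-repelling fixed points, and thus (a).

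For (b), equivalently $\Gamma$ decomposes $\C$ into only finitely many basic regions. Following the Goldberg--Milnor strategy adapted to the transcendental setting, for each basic region $B$ the boundary $\partial B$ is pointwise fixed by a well-defined inverse branch $\phi_B$ of $f$ (using that each boundary ray is setwise $f$-invariant and its landing point is fixed), and this branch maps $\overline{B}$ into itself. A hyperbolic contraction argument --- exploiting the Eremenko--Lyubich contraction of inverse branches of $\BB$-maps on simply connected domains disjoint from $\overline{S(f)}$ --- produces a fixed point $z_B \in \overline{B}$ of $\phi_B$, hence of $f$. Either $z_B$ lies in the interior of $B$ (and is then an interior repelling fixed point, since no fixed ray can terminate at an interior point of a basic region) or $z_B \in \partial B$ (and is then forced to be non-repelling as the attractor of a contraction, so parabolic or Cremer and already counted in (a)). A combinatorial/dynamical count, using that the action of $f$ on $\Gamma$ is constrained by the boundedness of $\overline{S(f)}$, then bounds the total number of such $z_B$'s, and hence of basic regions.

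The main obstacle is (b), in two respects. First, constructing a contracting inverse branch $\phi_B$ on an \emph{unbounded} region whose boundary extends to $\infty$ is delicate: one must use the tract structure at $\infty$ in class $\widehat{\BB}$ to ensure $\phi_B$ is well defined along the unbounded parts of $\partial B$ and to preclude the fixed point $z_B$ from escaping to $\infty$. Second, and more seriously, bounding the number of interior repelling fixed points arising as $z_B$ is not a consequence of classical Fatou--Shishikura counts (which control only non-repelling cycles) and seems to require a more refined combinatorial argument that exploits both the boundedness of $\overline{S(f)}$ and the way $f$ permutes the finitely many basic regions around $\infty$.
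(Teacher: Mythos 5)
Your proposal misses the paper's key mechanism and, more importantly, contains circularity that cannot be repaired. The paper's actual proof is purely local and elementary: using the tract/fundamental domain decomposition (Section 2), only finitely many fundamental domains intersect the disc $D$; on any fundamental domain $F$ disjoint from $D$, the map $f\colon F \to \C\setminus(\ov D \cup \delta)$ is univalent and a Schwarz-lemma contraction argument (Proposition \ref{Forced landing}) forces the unique fixed ray $g_F$ to converge to the unique fixed point $w_F \in F$, which is automatically repelling. Since $w_F$ lies in the interior of $F$, the only fixed rays that could possibly land at $w_F$ besides $g_F$ are those asymptotically contained in the finitely many fundamental domains meeting $D$ --- so all but finitely many fixed rays land alone at repelling fixed points. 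This is short, requires no global counting, and makes no reference to the Fatou set.

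Your step (a) does not go through. To conclude that only finitely many fixed rays land at non-repelling fixed points, you invoke a ``Fatou--Shishikura/Epstein-type bound adapted to class $\BB$'' from the boundedness of $S(f)$. But boundedness of $S(f)$ does not imply $S(f)$ is finite, and the classical Fatou--Shishikura counts require finitely many singular values (or finitely many singular orbits); there is no prior result bounding non-repelling cycles for $f \in \BB$ with infinitely many singular values. Indeed, the paper explicitly advertises exactly that finiteness (Corollary D, part 1, with the remark ``regardless of the presence of infinitely many singular values'') as a \emph{new consequence} of the Separation Theorem, which itself rests on Proposition A. Reading it back in as an input is circular.

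Your step (b) has the same structural problem. You propose to bound the number of basic regions by constructing an attracting fixed point of an inverse branch $\phi_B$ in each one, then counting these fixed points via some refined combinatorial argument. But counting one distinguished fixed/virtual-fixed point per basic region \emph{is} the Separation Theorem (Main Theorem), which in the paper requires Proposition A as an input (to know there are finitely many basic regions before one even sets up the index argument). You acknowledge at the end that you do not know how to complete the count --- and that instinct is correct: the paper itself states that the Goldberg--Milnor finite-degree strategy ``cannot be adapted to the transcendental setting,'' which is why the authors instead count fixed points directly via the argument principle \emph{after} finiteness of basic regions is already secured. The missing idea in your proposal is the expansion/Schwarz argument localized to a single fundamental domain disjoint from $D$ --- that is what converts a potentially infinite global problem into a finite one without appeal to any non-repelling-cycle count.
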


Additionally, a global counting of interior fixed points similar to the polynomial case can be done. 

\begin{thmB}\label{thmB}
Let $f\in\Bt$ such that all fixed rays land.  Let  $G$ be  a collection of $L \geq 0$  fixed rays such that it contains the set of rays which do {\em not} land alone at repelling fixed points. Then there are exactly $L+1$ fixed points, counted with multiplicity, which are either landing points of some $g\in G$ or interior fixed points. 
\end{thmB}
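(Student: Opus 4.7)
My plan is to combine Euler's formula for the graph $\Gamma_G$ with a Goldberg--Milnor-style local index count in each basic region. Since $L$ is finite and every ray terminates at $\infty$, adjoining $\infty$ as a single vertex of degree $L$ turns $\Gamma_G$ into a connected graph on $\hat{\C}$ with $V+1$ vertices and $L$ edges, where $V$ is the number of distinct landing points of $G$-rays. Euler's formula on the sphere then gives $F=L-V+1$ basic regions, so the identity $L+1=V+F$ reduces to the claim that \emph{each basic region contributes exactly one fixed point counted with multiplicity}, with the multiplicities of any parabolic landing points suitably distributed among their adjacent basic regions through the petal combinatorics.

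For the local count, fix a basic region $U$ and a radius $R$ so large that every landing point on $\partial U$ lies in $D_R$. The truncation $U_R=U\cap D_R$ is a topological disc whose boundary consists of ray segments, landing points, and arcs of $\partial D_R$. Since each ray in $G$ is $f$-invariant and the dynamics along dynamic rays in class $\Bt$ is outward-expanding (see \cite{R3S}), for $R$ large no fixed point of $f$ lies on a ray segment of $\partial U_R$ away from its landing point. A small inward perturbation near each landing point, chosen to enclose the relevant attracting petals in the parabolic case, yields a Jordan curve $\gamma_U\subset\overline{U}$ that avoids fixed points of $f$; by the argument principle applied to $f(z)-z$, the local count of $U$ equals the winding number of $f(z)-z$ along $\gamma_U$, which I aim to show is $1$.

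The winding number decomposes into ray-segment and arc contributions. Along the ray segments, both $z$ and $f(z)$ lie on the same fixed ray pointing outwards, so $f(z)-z$ remains aligned with the asymptotic direction of the ray and its contribution is straightforward. The arc contribution is the delicate one: unlike the polynomial case of \cite{GM}, where $|f(z)|\gg|z|$ on $\partial D_R$ trivializes it, here $f$ is highly oscillatory on $\partial D_R$. Using the tract structure of class $\Bt$, each arc of $\partial D_R\cap\overline{U}$ sits inside a single logarithmic tract where $f$ acts as a polynomial-type cover, and a direct computation in these coordinates, modelled on \cite{GM}, shows that the arc and ray contributions sum to winding number $1$. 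Summing over the $F$ basic regions and adding the $V$ landing-point contributions gives $V+F=L+1$.

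The main obstacle I expect is rigorously estimating the $\partial D_R$-arc contribution and proving it has a well-defined $R\to\infty$ limit: this uses crucially both that $f\in\Bt$ (for the tract model) and that the factors $g_i$ have finite order (to control the asymptotics of tracts and rule out spurious winding escaping to infinity). A secondary technical issue is the bookkeeping at parabolic landing points, where the inward perturbation must partition the attracting petals among the adjacent basic regions in accordance with the petal-sector combinatorics, so that the local counts sum to the correct total $L+1$ rather than miscounting hidden petals.
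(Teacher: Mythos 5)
Your strategy---Euler's formula on the sphere plus a per-basic-region index count---is a genuinely different route from the paper's, but as outlined it has a logical inversion and leaves the crux unproved. The claim that ``each basic region contributes exactly one fixed point counted with multiplicity'' \emph{is} the Separation Theorem (the paper's Main Theorem), which in the paper is proved \emph{after} and \emph{using} Theorem B: the virtual-fixed-point case of the Main Theorem is handled there by a counting argument that invokes the global Theorem~\ref{Global counting theorem} precisely because the per-region index computation breaks down near parabolic points on $\partial V$. So reducing Theorem B to a local count per region does not simplify the problem---it trades it for a strictly harder one, and you would need a fully independent local argument for regions containing virtual fixed points, which you acknowledge only as ``secondary bookkeeping.''

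The paper's actual proof of Theorem B avoids your two flagged obstacles altogether by a different choice of Jordan curve. Rather than truncating a basic region (so that ray segments and landing points lie on the boundary), it builds one large Jordan curve $\Gamma = \bigcup_\alpha(\Gamma_\alpha \cup r_\alpha)$ out of preimages of $\delta_P$, connecting arcs $\gamma_\alpha$ outside the tracts, and arcs $r_\alpha \subset f^{-1}(C_R)$; the fixed rays and all their landing points lie strictly \emph{inside} the enclosed region $\hat U$, never on $\Gamma$. The arc contribution you call ``the delicate one'' is then handled not by a direct asymptotic estimate in tract coordinates but by the Structural Lemma part (b) (which places $f^{-1}(C_R)\cap\FF$ inside $D_R$) together with the Second Homotopy Lemma; the image $f(r_\alpha)$ is exactly $C_R$ covered $N_\alpha$ times, and the homotopy lemma turns this into a clean index $N_\alpha + \ind(r_\alpha,P)$. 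Your assertion that along ray segments ``$f(z)-z$ remains aligned with the asymptotic direction of the ray and its contribution is straightforward'' is not justified and is not true without further work: rays can wiggle, and controlling the winding of $f(z)-z$ along them is exactly what the paper's First and Second Homotopy Lemmas, and the modifications of Lemmas~\ref{Enlarging rfp}--\ref{Enlarging pfp}, are engineered to do in the Main Theorem's (harder) local proof. In short: the Euler count is fine, but the local index claim you are reducing to is both unproved in your sketch and is the deeper theorem that the paper derives \emph{from} Theorem B, not the other way around.
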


An easy but surprising consequence of this theorem (see Corollary \ref{onlyoneregion}) is that a function in class $\Bt$ without fixed ray pairs (but which may have infinitely many singular values) can have at most one interior fixed point, in particular at most one non-repelling fixed point. 

To state our main theorem we need to recall the local dynamics around a parabolic fixed point (see \cite{Mi}).  In a neighborhood of a  fixed point $z_0$ with multiplicity $m+1\geq 2$, a holomorphic map $f$ can be written as $f(z)=z + a (z-z_0)^{m+1}+ \mathcal{O}(z-z_0)^{m+2}$. In this case, $z_0$ is in the boundary of $m$ distinct immediate basins of attraction which are fixed by $f$, formed by points whose orbits converge to $z_0$ in a given asymptotic direction. Following Goldberg and Milnor \cite{GM}, we call each of these basins a {\em virtual fixed point} of $f$.  More generally, any immediate basin of attraction for a  parabolic $p-$periodic point (i.e. a point with multiplier one under $f^p$) is called  a  {\em virtual p-periodic point} whenever it is fixed by $f^p$.

 The following is the main theorem in the paper. For completeness, it also contains 
the statement of Proposition A. 

 \begin{mainthm}\label{thmA}
Let  $f\in\Bt$ and assume  that all fixed rays land. Then there are finitely many basic regions, and each basic region  contains exactly one interior fixed point or virtual fixed point.
\end{mainthm}

{It follows that interior fixed points can not be multiple. They can still be parabolic but only if the multiplier is different from 1. Otherwise they would have a virtual fixed point attached contradicting the Separation Theorem.} 

 The same Separation Theorem concerning polynomials with a connected Julia set was proven by Goldberg and Milnor in \cite[Theorem~3.3]{GM} and we shall refer to it  as the Goldberg-Milnor Theorem.  In the polynomial case the finiteness of the set of basic regions is immediate, since there are only finitely many fixed rays and fixed points. 
 In their proof, the finiteness of the degree of $f$ is used in an essential way. In a suitable restriction of each basic region, they construct a weakly polynomial-like map and use the  Lefschetz fixed point theory to count the number of critical points and fixed points.
 
The strategy as it is  cannot be adapted to the transcendental setting, where the degree is hardly well defined on any restriction, and where the role of critical values can be carried out also by asymptotic values. However,  to prove the Main Theorem we do use Goldberg and Milnor's idea of doing a counting  in the basic regions, and we manage to do it successfully by counting directly  the fixed points instead of going through critical points. 

As in the polynomial case, the statement about periodic points follows from  the fixed case and from the fact that the class $\Bt$ is invariant under composition. Since this is the most general result and the one that is used in the applications, we state it as  a theorem although it follows immediately from the Main Theorem.

\begin{thmC}\label{thmC}
Let $f\in\Bt$ such that $p$-periodic rays land. Then there are finitely many basic regions for $f^p$, and each basic region  contains exactly one interior p-periodic point or virtual p-periodic point.
\end{thmC}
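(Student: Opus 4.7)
The plan is to deduce Theorem C directly from the Main Theorem applied to the iterate $g := f^p$. The first observation, explicitly noted in the excerpt, is that $\Bt$ is closed under composition, so $g \in \Bt$. The whole reduction rests on checking that each notion appearing in Theorem C for $f$ at level $p$ corresponds faithfully to the analogous notion for $g$ at level $1$, so that the Main Theorem can be applied verbatim to $g$.

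First I would verify the identification of rays. A $p$-periodic ray of $f$ is, by definition, a maximal injective curve in $I(f)$ invariant as a set under $f^p$. Since $I(f)=I(g)$ and the symbolic-dynamics construction of \cite{R3S} classifies rays in terms of the dynamics on a suitable tail, a $p$-periodic ray of $f$ is precisely a fixed ray of $g$, and conversely. Consequently the hypothesis of Theorem C that all $p$-periodic rays of $f$ land becomes exactly the hypothesis of the Main Theorem for $g$, namely that all fixed rays of $g$ land.

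Next I would match the notions of interior and virtual points. A point $z_0$ is an interior $p$-periodic point of $f$ iff $g(z_0)=z_0$ and no $p$-periodic ray of $f$ lands at $z_0$; by the identification above this is the same as saying $z_0$ is an interior fixed point of $g$. A virtual $p$-periodic point of $f$ is, by definition, an immediate parabolic basin of a parabolic fixed point of $g$, which is exactly a virtual fixed point of $g$. Likewise, the graph $\Gamma_g$ consisting of the fixed rays of $g$ together with their landing points coincides as a subset of $\C$ with the graph of $p$-periodic rays of $f$ together with their landing points, so the basic regions for $f^p$ in the sense of Theorem C are precisely the basic regions of $g$ in the sense of the Main Theorem.

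With these identifications in place, the Main Theorem applied to $g \in \Bt$ yields that there are finitely many basic regions and that each contains exactly one interior fixed point or virtual fixed point of $g$, which is precisely the content of Theorem C. The only point requiring real care, and thus the main (minor) obstacle, is to ensure that the parametrization and tail-equivalence conventions that define rays for $f$ and for $f^p$ are mutually compatible, so that the phrases ``fixed ray of $f^p$'' and ``$p$-periodic ray of $f$'' refer to exactly the same curves (including their behaviour with respect to broken rays and landing at singular values). Once this bookkeeping is in order, Theorem C follows as a formal consequence of the Main Theorem, with no new dynamical input.
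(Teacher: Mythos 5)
Your proposal is correct and takes exactly the same route as the paper: apply the Main Theorem to $g=f^p$, using that $\BBt$ is closed under composition, and identify fixed rays, interior fixed points, virtual fixed points, and basic regions for $g$ with the corresponding $p$-periodic objects for $f$. The paper presents this as an immediate consequence; the bookkeeping about ray identifications that you flag is indeed the only content, and you have traced through it faithfully.
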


The true power of the Goldberg-Milnor Theorem, and therefore of our Separation Theorem, resides in their remarkable number of corollaries. Some of them were not yet known for transcendental functions of any class. We summarize them in the following statement (see Section~\ref{Fixed points on the boundary} for the definition of petals). 

\begin{corD}\label{corD}
If $f$ is a function in $\Bt$ whose periodic rays land,
\begin{enumerate}
\item There are only finitely many interior periodic  points of any given period.  In particular, there are only finitely many non-repelling cycles of any given period.
\item Any two periodic Fatou components can be separated by a periodic ray pair. 
\item There are no Cremer   periodic points on the boundary of periodic Fatou components.
\item If $z_0$ is a  parabolic point, for each repelling petal there is at least  one  ray  which lands at $z_0$ through that repelling petal. In particular, any two disjoint attracting petals invariant under $f^p$ are separated by a ray pair of period $p$.
\item For any given period $p$, there are only finitely many (possibly none) {$p-$periodic} points which are landing points of more than one periodic ray. None of them is the landing point of infinitely many rays of the same period.
\end{enumerate}
\end{corD}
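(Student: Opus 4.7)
The plan is to derive each of the five assertions from Theorem C and Proposition A applied to the iterate $f^p$, which remains in $\Bt$ since that class is closed under composition. In each case I fix a common period $p$ so that every relevant ray, Fatou component, periodic point and petal is fixed by $f^p$, and then work with the graph $\Gamma_p$ of $p$-periodic rays together with their landing points, and with the finitely many basic regions for $f^p$ that it defines.

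Item (1) is immediate: non-repelling periodic points are interior $p$-periodic points, since by the Snail Lemma no periodic ray can land at an attracting, Siegel or Cremer point. Theorem C then bounds the number of interior $p$-periodic points by the (finite) number of basic regions, and dividing by $p$ bounds the number of non-repelling cycles of period $p$. For item (2), given two distinct periodic Fatou components $U_1,U_2$ with common period $p$, each is contained in the immediate basin of an interior $p$-periodic point (attracting or Siegel center) or is itself an attracting petal, i.e.\ a virtual $p$-periodic point. If $U_1$ and $U_2$ lay in the same basic region, that region would contain two distinct centers, contradicting Theorem C; hence they lie in different basic regions, whose common boundary in $\Gamma_p$ must therefore contain a Jordan arc from $\infty$ to $\infty$, i.e.\ a $p$-periodic ray pair. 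Item (3) is similar: a Cremer $p$-periodic point $z_0$ is interior and not on $\Gamma_p$, so it lies in the interior of some basic region $B$; if $z_0 \in \partial U$ for a periodic Fatou component $U$, then $U \subset B$ as well and $B$ would contain two centers, again contradicting Theorem C.

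For item (4), two disjoint attracting petals invariant under $f^p$ are two distinct virtual $p$-periodic points, so the same argument as in (2) produces a $p$-periodic ray pair separating them. The first assertion, on rays landing through repelling petals, proceeds as follows: at a parabolic $p$-periodic point $z_0$ with $q$ attracting petals, each petal is a distinct virtual $p$-periodic point and hence lies in its own basic region of $f^p$ touching $z_0$. Comparing this global count with the local cyclic alternation of attracting and repelling petals at $z_0$ shows that at least $q$ rays must land at $z_0$; the existence of a ray through each repelling petal is then obtained by a local Fatou-coordinate argument inside the petal, while the Separation Theorem applied at $z_0$ prevents any petal from being skipped.

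Finally, item (5) reduces to Proposition A for $f^p$: if a point $z$ is the landing point of two or more $p$-periodic rays, then it is the common landing point of a $p$-periodic ray pair, and Proposition A provides only finitely many such pairs. For the second assertion, if infinitely many $p$-periodic rays were to land at a single point $z_0$, a small disk around $z_0$ would be cut into infinitely many open sectors, each inside a distinct basic region of $f^p$ (since the separating rays extend to $\infty$), contradicting the finiteness of basic regions in Theorem C. The main obstacle in the whole proof is item (4): one must carefully translate the global counting of Theorem C into a local combinatorial statement matching the basic regions touching the parabolic point with the cyclic arrangement of its attracting and repelling petals.
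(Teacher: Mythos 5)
Your proposal follows essentially the same route as the paper: apply Theorem~C and Proposition~A to $f^p$ (using that $\Bt$ is closed under composition) and read off each assertion from the fact that basic regions are finite in number and each contains exactly one interior or virtual $p$-periodic point. Items~1, 2, 3 and 5 match the paper's proof closely, modulo small inaccuracies of phrasing (e.g.\ in item~2 the two basic regions containing $U_1$ and $U_2$ need not share a boundary, but there is still a ray pair separating them).

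The one place where your argument is genuinely weaker than the paper's is item~4. You first count ``at least $q$ rays'' and then invoke ``a local Fatou-coordinate argument inside the petal'' to get a ray through each repelling petal; but no Fatou-coordinate computation produces a ray, and a global count of $\geq q$ rays does not prevent two rays from sharing one repelling petal while another petal is skipped. The paper's argument is simpler and avoids any local-to-global translation: if some repelling petal contained no ray landing at $z_0$, then the two attracting petals adjacent to it (two virtual $p$-periodic points) would lie, near $z_0$, in the same complementary sector of $\Gamma_p$, hence in the same basic region, contradicting Theorem~C. Your remark that ``the Separation Theorem\ldots prevents any petal from being skipped'' gestures at this, but you never state the two-adjacent-basins contradiction that makes it work. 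You also do not treat the case of a single attracting petal ($q=1$), where there is no pair of adjacent basins; there the paper argues that if no ray landed at $z_0$, then $z_0$ would be an interior fixed point of $f^p$ lying in the same basic region as its own attracting petal, again contradicting Theorem~C. Adding these two observations would close the gap in your item~4 and would render the appeal to Fatou coordinates (and the preliminary count of $q$ rays) unnecessary.
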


Note that statement 1 holds regardless of the presence of infinitely many singular values.

From the fact that Fatou components can be separated by periodic ray pairs we obtain the following additional corollary. Given an invariant Siegel disk $\Delta$, we say that $U$ is a \emph{hidden component} of $\Delta$ if $U$ is a bounded connected component of $\C\setminus\overline{\Delta}$.  Siegel disks with hidden components have never been found but their existence has never been ruled out either, not even for polynomials.

\begin{corE}
If $f\in\BBt$ and all periodic rays land, then any hidden component of a bounded invariant Siegel disk is either a wandering domain or  preperiodic to the Siegel disk itself.
\end{corE}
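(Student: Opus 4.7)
The strategy is to track the forward orbit of $U$ using the forward-invariance of $\partial \Delta$ and then invoke Corollary~D(2) to exclude all possibilities beyond those asserted.

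The first step is to show that for every $n \geq 0$, the iterate $f^n(U)$ is either equal to $\Delta$ or is itself a hidden component of $\Delta$ (and in particular $U$ is a Fatou component, $U \subset \FF(f)$). Since $\partial U \subset \partial \Delta$ and $f(\partial \Delta) \subset \partial \Delta$ (because $\Delta$ is an invariant Siegel disc), one gets $\partial f^n(U) \subset f^n(\partial U) \subset \partial \Delta$. Now $f^n(U)$ is open, connected and bounded, and disjoint from $\partial \Delta \subset J(f)$ by forward-invariance of $\FF(f)$; hence $f^n(U)$ is a nonempty clopen subset of some connected component of $\C \setminus \partial \Delta$, and therefore equals that component. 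Among these components, only $\Delta$ and the hidden components are bounded.

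From here the argument branches as follows. If $f^n(U) = \Delta$ for some $n$, then $U$ is preperiodic to $\Delta$. If instead every $f^n(U)$ is a hidden component and the orbit $\{f^n(U)\}_{n \geq 0}$ is eventually periodic, then some $W := f^{n_0}(U)$ is a periodic Fatou component with $W \neq \Delta$, and I will derive a contradiction. Otherwise, the orbit runs through infinitely many distinct hidden components, and $U$ is a wandering domain by definition. To rule out the periodic case, apply Corollary~D(2) to find a periodic ray pair $\gamma = g_1 \cup g_2 \cup \{p\}$ separating $W$ from $\Delta$ in $\hat{\C}$, and write $\hat{\C} \setminus \gamma = D_1 \sqcup D_2$ with $W \subset D_1$ and $\Delta \subset D_2$. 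The rays $g_1, g_2$ lie in $I(f)$ and hence escape to infinity, whereas $\partial \Delta$ is forward-invariant and bounded; so the rays cannot meet $\partial \Delta$, and $\gamma \cap \overline{\Delta} \subset \{p\}$. Now $\overline{\Delta} \setminus \{p\}$ is connected (because $\Delta$ is connected and dense in $\overline{\Delta}$), disjoint from $\gamma$, and meets $D_2$, so it lies entirely in $D_2$; in particular $\partial \Delta \setminus \{p\} \subset D_2$. On the other hand $\partial W \subset \overline{W} \subset \overline{D_1} = D_1 \cup \gamma$ and $\partial W \subset \partial \Delta$, hence $\partial W \subset \partial \Delta \cap (D_1 \cup \gamma) \subset \{p\}$. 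This is impossible, because the boundary of a bounded open subset of $\C$ cannot be a single point.

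The main obstacle I anticipate is this last topological step, especially when $p \in \partial \Delta$ so that the separating curve actually touches $\overline{\Delta}$. The key observation that makes this case work is that $\overline{\Delta} \setminus \{p\}$ remains connected because $\Delta$ is connected and dense in its closure; without this fact, pathological Siegel boundaries would force a much more delicate prime-end style argument. A secondary technical concern is confirming at the outset that $U$ is itself a Fatou component, but this is built into the invariance argument of the first step.
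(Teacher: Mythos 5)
Your proposal is correct and follows essentially the same approach as the paper: both establish that every iterate $f^n(U)$ is a hidden component (hence $U\subset\FF(f)$), and both rule out eventual periodicity to a hidden component $W\neq\Delta$ by invoking the separation statement of Corollary D(2) to get a periodic ray pair separating $W$ from $\Delta$, forcing $\partial W\subset\partial\Delta$ to concentrate at the single landing point. Your version just fills in the topological details (rays escape so cannot meet the bounded invariant set $\partial\Delta$; $\overline{\Delta}\setminus\{p\}$ remains connected since $p\in J(f)$ so $p\notin\Delta$) which the paper states more tersely.
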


This article is structured as follows. In Section \ref{Tools} we state and prove a result concerning the existence of fixed rays for functions in $\BBt$ and some expansion properties (Lemma~\ref{Technical}). We call it the Structural Lemma and it will be deduced from results in \cite{R3S}. Hence, we must introduce logarithmic coordinates and the setup of \cite{R3S} to prove it. We also  introduce some additional symbolic dynamics to be able to transfer their statements to the original dynamical plane. This setup has interest on its own and we hope that it can be used for other purposes as well.  With the Structural Lemma in hand, in Section \ref{Where are interior periodic points?}  we present  some results about the topology and general distribution of fixed points in the plane, proving Proposition A. In Section \ref{Index} we   prove some  lemmas about homotopies used in the two following sections, which are dedicated to the proof of Theorem B and to  the proof of the Main Theorem respectively. The latter uses the setup and notation of the first so it cannot be read independently. Finally, in Section \ref{Corollaries} we prove corollaries  D  and E.

\subsection*{Acknowledgments}
We are thankful to Arnaud Ch\'eritat and to Pascale Roesch for sharing with us the preprint of their recent work on Siegel disks for polynomials with two critical values \cite{CR}, from which comes the original idea that ended up generating this present paper. We thank Xavier Buff for helpful discussions and 
we are also thankful to IMUB,  to the Complex Dynamics group at Universitat de Barcelona, to the CRM Ennio de Giorgi, and to the CODY  Marie Curie Network thanks to which this project could be carried out.  We  give very special thanks to the referee for the very thoughtful comments which brought to the clarification of many details, as well as suggesting several simplifications for our proofs. Thanks to them the exposition was largely improved.

\section{Tools and preliminary results}\label{Tools}

In this section, we present the main features of the dynamical plane for $f\in\BBt$ and state a Structural Lemma (Lemma~\ref{Technical}) which is needed for the rest of the paper.
We prove it using results of \cite{R3S} together with some study of the combinatorics of $f$. In further sections we will refer to the statement of the Structural Lemma but not to its proof; the faithful reader can skip Sections~\ref{Logarithmic coordinates}, \ref{Expansion near infinity} and \ref{raysandprop}.

\subsection{Structure of the dynamical plane and main technical lemma} \label{structure}

{The following construction is from \cite{EL} (see also \cite{R3S}).} Let $f\in\BB$, and $D$ be a Jordan domain containing $S(f)$, $0$ and  $f(0)$.  For simplicity we assume that the boundary of $D$ is real analytic {and for most of the applications, one can think of $D$ as a round disk}.  Then the preimage $\TT$ of $\C\setminus \Dbar$ under $f$ consists of countably many unbounded connected components $\{\Ta, \alpha \in \AA\}$ called \emph{tracts} of $f$, and $f:{\Ta}\ra \C\setminus \Dbar$ is a universal covering of infinite degree. If f has finite order then the number of tracts is finite.

 Observe that since fibers are discrete, tracts accumulate only at infinity. In particular, only finitely many tracts intersect any compact set.
 
 \begin{lem}\label{Existence of delta} There is a simple curve $\delta\subset\C\setminus(\Dbar\cup\ov{\TT})$ connecting $\Dbar$ to infinity.
 \end{lem}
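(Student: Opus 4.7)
The target set is $\C\setminus(\Dbar\cup\overline{\TT})=f^{-1}(D)\setminus\Dbar$, since $f$ is open and hence $\C\setminus\overline{\TT}=f^{-1}(D)$. Call this open set $U$; the goal is to find in $U$ a simple arc with one end on $\partial D$ and escaping to infinity.

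First I would check that $U$ is nonempty and unbounded. By the great Picard theorem applied at the essential singularity of $f$ at $\infty$, for any non-exceptional $w\in D$ the fibre $f^{-1}(w)$ is infinite, discrete, and accumulates only at $\infty$, so $f^{-1}(D)$ is unbounded, and hence so is $U$.

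The bulk of the argument is topological, exploiting the structure of tracts. Since $f|_{T_\alpha}\colon T_\alpha\to\C\setminus\Dbar$ is the universal cover of an annulus, each $T_\alpha$ is simply connected, and because $S(f)\subset D$ no critical values of $f$ lie on $\partial D$, so $f^{-1}(\partial D)$ is a smooth real-analytic $1$-manifold consisting of properly embedded arcs. A short argument via the argument principle rules out any closed loop in $\partial T_\alpha$ (such a loop would bound a disc on which $f$ would be a proper finite-degree map onto the unbounded set $\C\setminus\Dbar$), so $\partial T_\alpha$ is a single unbounded simple arc with both ends at $\infty$; equivalently, $\overline{T_\alpha}\cup\{\infty\}$ is a closed topological disc in $S^2$.

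Next, fix $R$ large with $\Dbar\subset D_R$; by local finiteness only finitely many tracts $T_1,\dots,T_k$ meet $\overline{D_R}$. Set $E_R:=\Dbar\cup\bigcup_i\overline{T_i}$. The key topological claim is that $\overline{D_R}\setminus E_R$ has a connected component $A_R$ whose closure meets both $\partial D$ and $\partial D_R$: each $\overline{T_i}\cup\{\infty\}$ is a topological disc in $S^2$ disjoint from $\Dbar$, and the pairwise disjointness of tracts prevents any finite union of them from forming a loop around $\Dbar$ inside $\overline{D_R}$. In $A_R$ I would then pick a simple arc $\delta_R$ running from a point of $\partial D$ to a point of $\partial D_R$. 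Finally, for an exhaustion $R_n\nearrow\infty$, I would concatenate $\delta_{R_1}$ with successive simple arcs lying in $A_{R_{n+1}}\cap(D_{R_{n+1}}\setminus\overline{D_{R_n}})$ that join the outer endpoint of the current curve to a point of $\partial D_{R_{n+1}}$; the resulting infinite union is a simple curve in $U$ from $\partial D$ out to $\infty$.

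The main obstacle is the no-separation claim: establishing that the finite union of tract closures together with $\Dbar$ cannot encircle $\Dbar$ inside $\overline{D_R}$. This rests on the precise topology of each $\overline{T_\alpha}\cup\{\infty\}$ as a Jordan disc in $S^2$ through $\infty$, together with the disjointness of different tracts, so that one can always ``slip around'' each tract without meeting another.
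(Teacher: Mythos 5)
Your proposal takes a genuinely different route from the paper. The paper works directly with $\hat{\TT}$, the closure in $\hat{\C}$ of the union of \emph{all} tracts: it argues that $\hat{\TT}$ is connected and full (so its complement in the sphere is a simply connected domain containing $D$), then proves $\hat{\TT}$ is locally connected -- the key point being local connectivity at $\infty$, which uses that tracts do not accumulate at finite points, so that for a small spherical ball $B_r$ about $\infty$ the set $B_r\cap\hat{\TT}$ has only finitely many components. Caratheodory's theorem then gives accessibility of every boundary point of $\hat{\C}\setminus\hat{\TT}$, in particular of $\infty$, and $\delta$ is such an accessing arc joined to $\partial D$. Your plan instead assembles $\delta$ from finite stages, exploiting local finiteness of the tracts on each $\overline{D_R}$ and deferring the global topology to a concatenation over an exhaustion.

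The step you yourself flag as ``the main obstacle'' is indeed a genuine gap: ``pairwise disjointness of tracts prevents \ldots a loop around $\Dbar$'' and the ``slip around'' picture do not constitute a proof that some component of $\overline{D_R}\setminus\bigl(\Dbar\cup\bigcup_i\overline{T_i}\bigr)$ reaches both $\partial D$ and $\partial D_R$; after slipping around one tract closure one may meet another, and a finite chain of disjoint sets can in principle still disconnect an annulus. A correct argument is available and should be supplied: each $\overline{T_i}\cup\{\infty\}$ is a closed Jordan disc in $S^2$ (here you should also record that the $\overline{T_\alpha}$ are \emph{pairwise disjoint} -- true because any $z$ with $f(z)\in\partial D$ is a regular point, so a small neighborhood of $z$ meets exactly one tract -- otherwise the Jordan-disc description collapses); these discs meet only at $\infty$, so $K:=\bigcup_i(\overline{T_i}\cup\{\infty\})$ is compact and contractible, hence $\check H^1(K\cup\Dbar)=0$ and by Alexander duality $S^2\setminus(K\cup\Dbar)$ is connected. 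That is the real content behind your ``no-separation claim,'' and it is exactly where the paper, by contrast, does the work via fullness plus local connectivity at $\infty$. Finally, the concatenation step is stated too loosely: to extend $\delta_{R_n}$ you must ensure its outer endpoint lies on the boundary of an admissible component of the next annulus $D_{R_{n+1}}\setminus\overline{D_{R_n}}$ (the components $A_{R_n}$ are not automatically nested), that the union of pieces remains simple, and that it genuinely escapes rather than accumulating on some $\partial D_{R_n}$. These are all repairable, but as written the proof does not yet go through.
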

 
  Existence of $\delta$ seems to be a rather standard fact for functions with finitely many tracts. However we are not aware of any reference covering the case with infinitely many tracts, so   we include a proof here.
 \begin{proof}Consider the closure $\hat{\TT}$ of the union of all tracts in the Riemann sphere $\hat{\C}$. This set is connected and full so its complement in $\hat{\C}$  is simply connected. By Caratheodory's Theorem, to show that any point in $\partial \hat{\TT}$, including infinity, is accessible by curves in $\hat{\C}\setminus\hat{\TT}$, it is enough to show that ${\partial}\hat{\TT}$ is locally connected, or equivalently that $\hat{\TT}$ is locally connected.  Local connectivity at interior points is immediate. At any point $z\in \partial\hat{\TT}, z\neq \infty$, it follows from the fact that $z$ is on the boundary of some tract and cannot be accumulated by any other point of $\hat{\TT}$, neither from the same tract (the boundary is a  smooth curve), nor from other tracts. To show local connectivity at infinity let $r>0$ and consider the ball $B_r$ of spherical  radius $r$ centered at infinity. Since tracts do not accumulate in any compact set, $\partial B_r$ intersects only finitely many tracts, and $B_r\cap \hat{\TT}$ has only finitely many connected components, exactly  one of which contains infinity. It follows that the finitely many components of $B_r\cap \hat{\TT}$ not containing infinity can be removed from $B_r$ giving a simply connected open neighborhood of infinity whose intersection with $\hat{\TT}$ is connected.
 \end{proof}
 
 Observe that, in fact, there are many accesses to infinity,  providing plenty of freedom in the choice of $\delta$.  Let $\delta$ be any curve as in Lemma~\ref{Existence of delta}.   The preimages $\{f^{-1}\delta\}$ subdivide each tract $T_\alpha$ into \emph{fundamental domains} $F_{\alpha,i}$ with $ i\in \Z$ on which $f: F_{\alpha,i}\ra\C\setminus(\delta\cup \ov{D})$ is univalent (see Figure~\ref{tractsandfd}).

 \begin{figure}[hbt!]
\begin{center}
\def\svgwidth{0.5\textwidth}
\begingroup%
  \makeatletter%
  \providecommand\color[2][]{%
    \errmessage{(Inkscape) Color is used for the text in Inkscape, but the package 'color.sty' is not loaded}%
    \renewcommand\color[2][]{}%
  }%
  \providecommand\transparent[1]{%
    \errmessage{(Inkscape) Transparency is used (non-zero) for the text in Inkscape, but the package 'transparent.sty' is not loaded}%
    \renewcommand\transparent[1]{}%
  }%
  \providecommand\rotatebox[2]{#2}%
  \ifx\svgwidth\undefined%
    \setlength{\unitlength}{517.48867188bp}%
    \ifx\svgscale\undefined%
      \relax%
    \else%
      \setlength{\unitlength}{\unitlength * \real{\svgscale}}%
    \fi%
  \else%
    \setlength{\unitlength}{\svgwidth}%
  \fi%
  \global\let\svgwidth\undefined%
  \global\let\svgscale\undefined%
  \makeatother%
  \begin{picture}(1,0.68251966)%
    \put(0,0){\includegraphics[width=\unitlength]{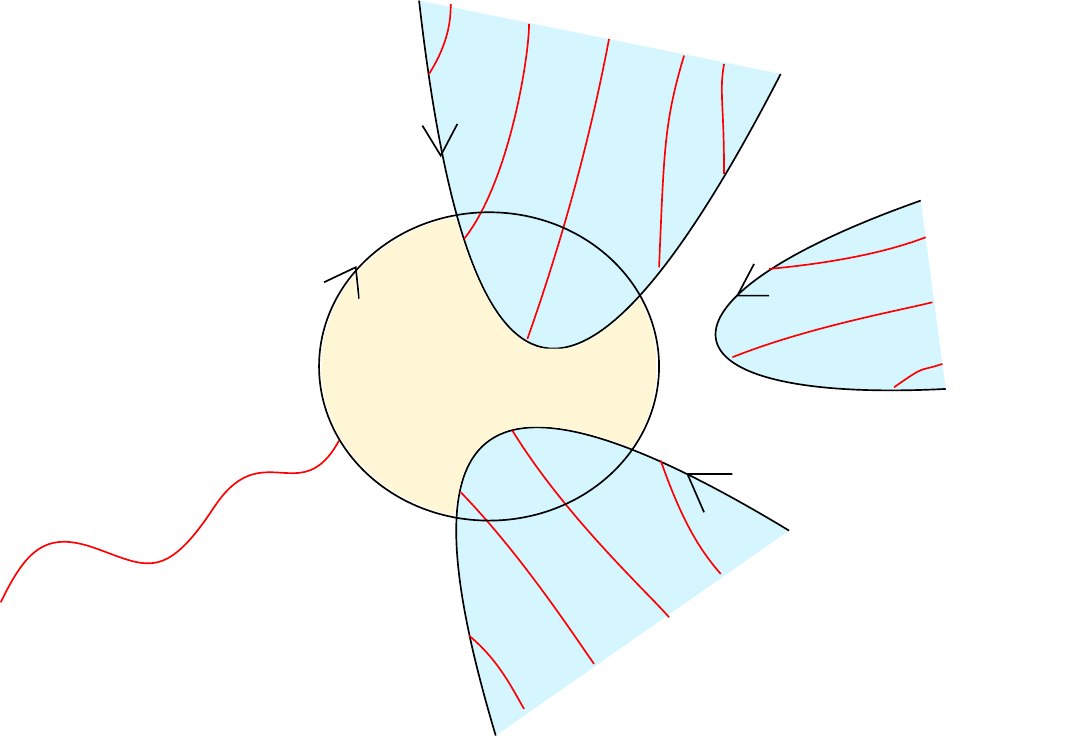}}%
    \put(0.08106458,0.19190807){\color[rgb]{0,0,0}\makebox(0,0)[lb]{\smash{$\delta$}}}%
    \put(0.61904737,0.05741238){\color[rgb]{0,0,0}\makebox(0,0)[lb]{\smash{\large $T_1$}}}%
    \put(0.8957684,0.4176135){\color[rgb]{0,0,0}\makebox(0,0)[lb]{\smash{\large $T_2$}}}%
    \put(0.54948063,0.66959969){\color[rgb]{0,0,0}\makebox(0,0)[lb]{\smash{\large $T_3$}}}%
    \put(0.50928651,0.13934654){\color[rgb]{0,0,0}\makebox(0,0)[lb]{\smash{\ss $F_{1,0}$}}}%
    \put(0.62368515,0.50109359){\color[rgb]{0,0,0}\makebox(0,0)[lb]{\smash{\ss $F_{3,-1}$}}}%
    \put(0.41962271,0.60621666){\color[rgb]{0,0,0}\makebox(0,0)[lb]{\smash{\ss $F_{3,2}$}}}%
    \put(0.4860976,0.56756847){\color[rgb]{0,0,0}\makebox(0,0)[lb]{\smash{\ss $F_{3,1}$}}}%
    \put(0.54948063,0.53201214){\color[rgb]{0,0,0}\makebox(0,0)[lb]{\smash{\ss $F_{3,0}$}}}%
    \put(0.73962972,0.39906237){\color[rgb]{0,0,0}\makebox(0,0)[lb]{\smash{\ss $F_{2,0}$}}}%
    \put(0.76591049,0.34186305){\color[rgb]{0,0,0}\makebox(0,0)[lb]{\smash{\ss $F_{2,-1}$}}}%
    \put(0.44744941,0.09760649){\color[rgb]{0,0,0}\makebox(0,0)[lb]{\smash{\ss $F_{1,-1}$}}}%
    \put(0.6546037,0.18417844){\color[rgb]{0,0,0}\makebox(0,0)[lb]{\smash{\ss $F_{1,2}$}}}%
    \put(0.58194511,0.16098952){\color[rgb]{0,0,0}\makebox(0,0)[lb]{\smash{\ss $F_{1,1}$}}}%
    \put(0.35469375,0.39751644){\color[rgb]{0,0,0}\makebox(0,0)[lb]{\smash{$D$}}}%
  \end{picture}%
\endgroup%
\end{center}
\caption{\small The topological  disk $D$, the tracts of $f$ and the fundamental domains.}
\label{tractsandfd}
\end{figure}
  
 When it is not relevant  which tract the fundamental domains are contained in, or their position inside the tract,   we  call a fundamental domain simply $F$. Observe that for each $F$, because  $f|_F$ is univalent, there is a well defined bijective inverse branch  $f^{-1}_F: \C\setminus(\delta\cup \ov{D})\ra F$.

\begin{defn}[\bf Dynamic ray]
A \emph{(dynamic) ray} for $f$ is an injective curve  $g:(0,\infty)\ra I(f)$ such that:
\begin{itemize}
\item[(a)] $\underset{t\ra\infty}\lim |f^n(g(t))|=\infty\,\  \forall n\geq 0$;
\item[(b)] $\underset{n\ra\infty}\lim |f^n(g(t))|=\infty$  uniformly in $[t_0,\infty)$ for all $t_0>0$;
\item[(c)] $f^n(g(t))$ is not a critical point for any $t>0$ and $n>0$;
\end{itemize} 
and such that $g(0,\infty)$ is maximal with respect to these properties. If  $g(0,\infty)$ is not maximal  when excluding (c),  then we call the ray {\em broken}.
\end{defn} 

Broken rays could therefore be continued if we allowed critical points and their iterated preimages to be part of the ray, as it is the case in the definition in \cite{R3S}, where branching might occur and several rays might share one same arc. This  situation cannot happen in our setting, i.e.  rays are pairwise disjoint. 
 
Recall that a  dynamic ray $g$ is  \emph{periodic} (or $p-$periodic) if $f^p(g)\subset g$ for some $p\geq 1$, and \emph{fixed} if $p=1$. We say that a dynamic ray \emph{lands} at a point $z_0\in\C$ if {it is not broken and} $\lim g(t)= z_0$ as $t\ra 0 $. Observe that dynamic rays are allowed to land at critical points, but that broken rays do not land.

We say that a dynamic  ray $g$ is \emph{asymptotically contained} in a fundamental domain $F$ if $g(t)\in F$ for all $t$ sufficiently large. It is easy to see that this is always the case.

\begin{lem}\label{asymptotically contained} Let $f\in \BB$. Then any dynamic ray is asymptotically contained in some fundamental domain.
\end{lem}
\begin{proof}
A dynamic ray must intersect  $\TT$ because all its images must be unbounded. Now suppose it is not asymptotically contained in some fundamental domain.  Then there exists some sequence $t_n\ra\infty$ such that $g(t_n)$ belongs to preimages of $\delta$. Hence $f(g)$ intersects $\delta$ infinitely many times and $f^2(g)$ intersects $D$ infinitely many times as $t\ra\infty,$ contradicting again property (a) in the definition.   \end{proof}

The following lemma summarizes all of the results from this section that we shall use in the rest of the paper. It relies on the results and constructions from \cite{R3S}, however it requires independent proofs that we will give in the following subsections.  Recall that maps in class $\BBt$, as defined in the introduction, are also in class $\BB$. For $R>0$, we denote by  $D_R$ the open disk centered at 0 and  of radius $R$,  and by $C_R$ the boundary of $D_R$. 

\begin{lem}[\bf Structural Lemma]\label{Technical}
Let $f\in\BBt$. Then
\begin{enumerate}
\item[(a)] Tracts and fundamental domains have a cyclic order at infinity and can be labeled according to that order.
\item[(b)]
Let $\FF$ be the union of finitely many  fundamental domains. Then for any $R$ sufficiently large, the set $f^{-1}(C_R) \cap \FF$  is contained in $D_R$. 
\item[(c)]  For each fundamental domain $F$, there is a unique fixed  dynamic  ray $g_{F}$ which is asymptotically contained in $F$. Conversely, any ray is asymptotically contained in some fundamental domain. 
\end{enumerate}
\end{lem}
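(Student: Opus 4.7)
My strategy is to reduce all three parts to the logarithmic-coordinate framework of \cite{EL}, as extended to class $\BBt$ in \cite{R3S}. After passing to log coordinates, each tract $\Ta$ lifts to a $2\pi i \Z$-indexed family of unbounded simply connected components, each mapped conformally by the log transform $\ft$ of $f$ onto a right half-plane; on each such component the Eremenko--Lyubich expansion estimate gives a quantitative lower bound on $|\ft'(\widetilde w)|$ once $\Re \widetilde w$ is large enough. For $f = g_1 \circ \cdots \circ g_k \in \BBt$ this estimate iterates through the composition up to a uniform constant on any fixed finite collection of lifted tracts. These are the only ingredients beyond \cite{R3S} that I will need.

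For part (a) I use that tracts accumulate only at infinity, so a sufficiently small spherical disc around $\infty$ meets any prescribed finite subfamily of tracts in disjoint arcs along its boundary circle; this yields a coherent cyclic order, and refining the subfamily produces the cyclic order on all of $\TT$. Within a single tract $\Ta$, the arcs comprising $f^{-1}(\delta) \cap \Ta$ are disjoint, run from $\partial \Ta$ to infinity, and inherit a $\Z$-indexing from the vertical coordinates of their lifts in log coordinates. Concatenating this linear order within each tract along the cyclic order on $\TT$ then gives the cyclic order on all fundamental domains.

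For part (b), fix $\FF = F_1 \cup \cdots \cup F_n$ and take a lift $\widetilde \FF$ of it in log coordinates. Because $\FF$ meets only finitely many tracts, a uniform $K > 0$ makes the Eremenko--Lyubich expansion valid on the part of $\widetilde \FF$ with $\Re \widetilde w \geq K$, and iterating through the $k$ factors of $f$ gives $\Re \ft(\widetilde w) \geq 2\,\Re \widetilde w$ on that region. Now if $w \in \FF$ satisfies $|f(w)| = R$ with $\log R \geq K$, then $\log R = \Re \ft(\widetilde w) \geq 2\,\Re \widetilde w$, so $|w| \leq R^{1/2} < R$; this proves $f^{-1}(C_R) \cap \FF \subset D_R$ for all sufficiently large $R$.

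For part (c) I would invoke the classification of dynamic rays from \cite{R3S}: rays of $f \in \BBt$ are in bijection with exponentially bounded external addresses $\underline F = F_0 F_1 F_2 \ldots$ whose entries are fundamental domains, with the shift on addresses realizing the dynamics of $f$. A constant address $\underline F = F F F \ldots$ is trivially exponentially bounded and yields a unique dynamic ray $g_F$, asymptotically contained in $F$ and fixed by $f$ because its address is shift-invariant. Conversely, any fixed ray escapes to $\infty$ inside the tracts and has a well-defined external address which must be shift-invariant, hence constant; so the ray is asymptotically in a single fundamental domain $F$ and coincides with $g_F$ by uniqueness. The main obstacle I anticipate is precisely this last point: the fundamental domains of $f = g_1 \circ \cdots \circ g_k$ arise from iterated pullbacks through the factors $g_i$, and it will require careful bookkeeping to verify that applying the \cite{R3S} classification directly to $f$ really produces exactly one fixed ray per fundamental domain of $f$, and not extra rays coming from the combinatorics of the individual factors.
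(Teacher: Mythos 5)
Your plan has a genuine gap in part (b). The Eremenko--Lyubich expansion estimate reads $|\ft'(\widetilde{w})|\geq \tfrac{1}{4\pi}\bigl(\Re\ft(\widetilde{w})-Q\bigr)$: it is a lower bound on $|\ft'|$ controlled by $\Re\ft(\widetilde{w})$, i.e.\ by the real part of the \emph{image}, not of the point itself. It therefore does not ``kick in once $\Re\widetilde{w}$ is large enough,'' and it does not by itself yield $\Re\ft(\widetilde{w})\geq 2\Re\widetilde{w}$. To pass from ``$\Re\widetilde{w}$ large'' to ``$\Re\ft(\widetilde{w})$ large'' one must rule out a lifted tract reaching far to the right while $\ft$ keeps bounded real part, and this is exactly what the bounded slope condition gives (Proposition~\ref{Bounded slope}); this is where finite order, hence the restriction to $\BBt$ rather than $\BB$, actually enters the argument. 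As written your proposal would read verbatim for an arbitrary $f\in\BB$, where part (b) can fail. The paper's route (Proposition~\ref{Almost straight}) uses bounded slope to control $|\Im z-\Im z_0|$ on a half-strip, deduces the hyperbolic estimate $\dist_{\Ht}(z_0,z)\leq \log R'+k$, transports it by the isometry $\ft^{-1}:\Ht\to\Tt$, and concludes via $\rho_{\Tt}\geq 1/(2\pi)$ that the preimage stays within Euclidean distance of order $\log R'$ of a bounded base point. Your inequality $\Re\ft(\widetilde{w})\geq 2\Re\widetilde{w}$ is a (much weaker) consequence of this chain, not of the derivative estimate alone, and you should either supply the bounded-slope argument or cite it.

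Your sketch for part (c) also understates a real dependency and misidentifies the obstacle. The classification in \cite{R3S} is stated in logarithmic coordinates for $\ft$, so one must transfer ray tails from $\ft$ to $f$ and then verify that the relevant set $J^K_{\st}(\ft)$ is nonempty for a constant address; the paper does both (Lemma~\ref{Correspondence of addresses}, Proposition~\ref{Rays for f}, Proposition~\ref{Existence for finitely many symbols}), and the nonemptiness step uses part (b) of the very lemma you are proving. The ``bookkeeping through the factorization $f=g_1\circ\cdots\circ g_k$'' that you anticipate is in fact a non-issue: tracts and fundamental domains are defined directly for $f$ via $f^{-1}(\C\setminus\overline{D})$ and $f^{-1}(\delta)$, and the composition structure only enters to guarantee that $\ft$ satisfies bounded slope. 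Part (a) is fine and essentially matches the paper's vertical-order-at-infinity argument in log coordinates.
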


The three following subsections are dedicated to the proof of the three statements of Lemma~\ref{Technical}.

\subsection{Logarithmic coordinates and lift of $f$}\label{Logarithmic coordinates} 

For an entire transcendental function  in class $\BB$ we consider logarithmic coordinates following \cite{EL} (see also \cite{R3S}).
The logarithm is a well defined multivalued function on $\C\setminus\Dbar$. Define the set  $\Ht:=\exp^{-1}(\C\setminus \Dbar )$ and observe that it  contains  a right half plane. 
Also $\exp^{-1}(\delta)$ is a countable family of curves $\deltat_i$ dividing $\Ht$ into semi-strips $\Sti$; without loss of generality we  label one of them $\St_0$ and find a branch $L_0$ of the logarithm mapping $\C\setminus(\Dbar\cup\delta)$ into $\St_0$. If we define branches $L_n$ of the logarithm as $L_n(w)=L_0(w)+2\pi i n$, we obtain a consecutive labeling of the strips $\St_n$ as images of $L_n$. Call $\deltat_n, \deltat_{n+1}$ the preimages of $\delta$ bounding the semi-strip $\St_n$.
Observe that the tracts do not contain the origin, so the branches of the logarithm are defined on the entire tracts
(see Figure~\ref{logcoordinatesnew2}).
The set  $\TTt:=\exp^{-1}(\TT)$ then consists of infinitely many connected components $\Tta$ which do not accumulate in any compact set; the set $\Tta$ is defined as $L_i(T_\alpha)$ so an unbounded part of $\Tta$  is always contained in $\St_i$. 

\begin{figure}[hbt!]
\begin{center}
\def\svgwidth{0.5\textwidth}
\begingroup%
  \makeatletter%
  \providecommand\color[2][]{%
    \errmessage{(Inkscape) Color is used for the text in Inkscape, but the package 'color.sty' is not loaded}%
    \renewcommand\color[2][]{}%
  }%
  \providecommand\transparent[1]{%
    \errmessage{(Inkscape) Transparency is used (non-zero) for the text in Inkscape, but the package 'transparent.sty' is not loaded}%
    \renewcommand\transparent[1]{}%
  }%
  \providecommand\rotatebox[2]{#2}%
  \ifx\svgwidth\undefined%
    \setlength{\unitlength}{915.2bp}%
    \ifx\svgscale\undefined%
      \relax%
    \else%
      \setlength{\unitlength}{\unitlength * \real{\svgscale}}%
    \fi%
  \else%
    \setlength{\unitlength}{\svgwidth}%
  \fi%
  \global\let\svgwidth\undefined%
  \global\let\svgscale\undefined%
  \makeatother%
  \begin{picture}(1,1.12125677)%
    \put(0,0){\includegraphics[width=\unitlength]{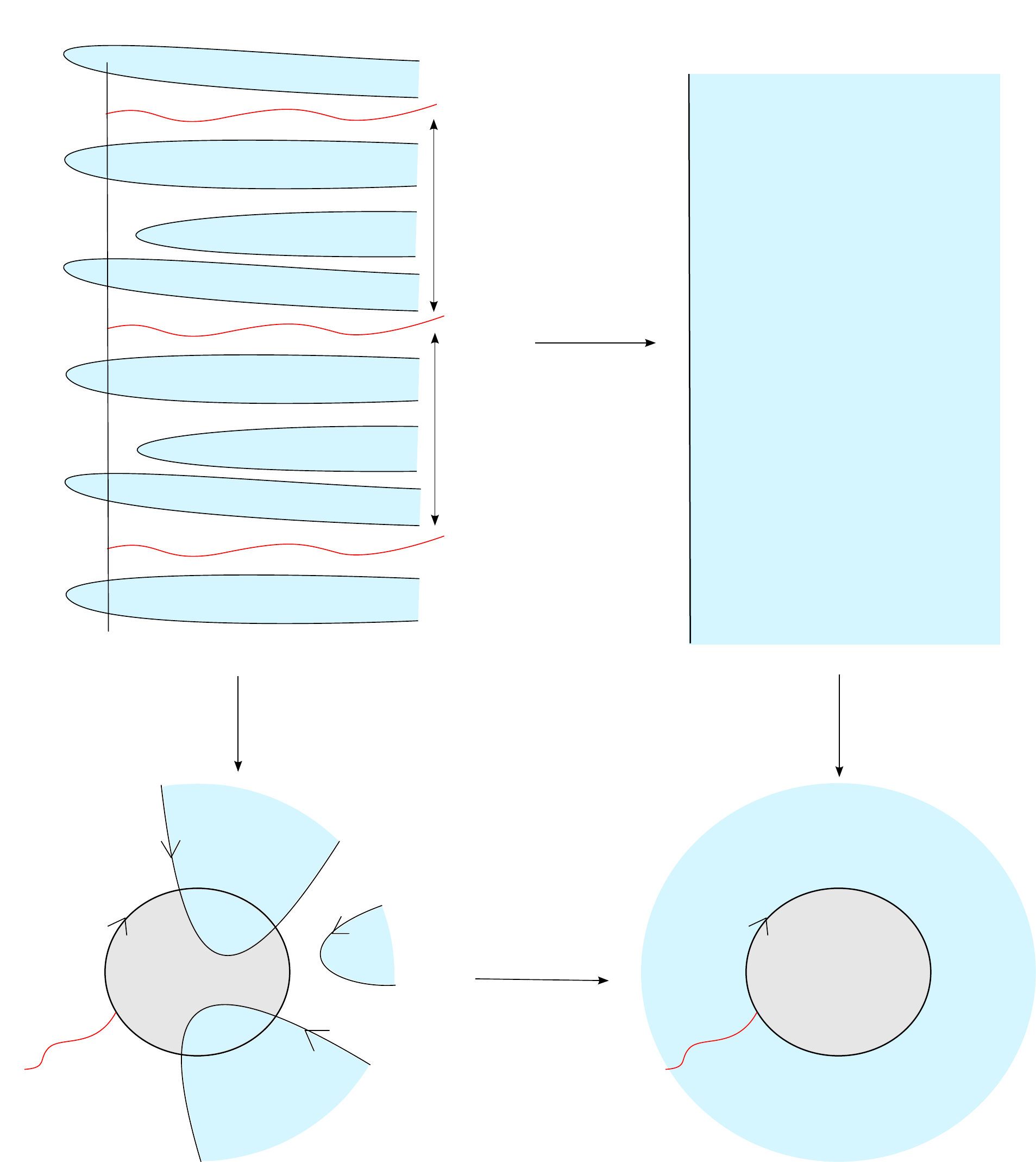}}%
    \put(0.79534245,1.10465697){\color[rgb]{0,0,0}\makebox(0,0)[lb]{\smash{ $\widetilde{H}$}}}%
    \put(0.54785279,0.8050335){\color[rgb]{0,0,0}\makebox(0,0)[lb]{\smash{$\widetilde{f}$}}}%
    \put(0.23935359,1.10797415){\color[rgb]{0,0,0}\makebox(0,0)[lb]{\smash{ $\widetilde{\mathcal{T}}$}}}%
    \put(0.50611888,0.19592467){\color[rgb]{0,0,0}\makebox(0,0)[lb]{\smash{$f$}}}%
    \put(0.15646853,0.42144914){\color[rgb]{0,0,0}\makebox(0,0)[lb]{\smash{\ss $\exp$}}}%
    \put(0.82954545,0.42319739){\color[rgb]{0,0,0}\makebox(0,0)[lb]{\smash{\ss $\exp$}}}%
    \put(0.14073427,0.19417641){\color[rgb]{0,0,0}\makebox(0,0)[lb]{\smash{$D$}}}%
    \put(0.22465035,0.29732327){\color[rgb]{0,0,0}\makebox(0,0)[lb]{\smash{\ss $T_1$}}}%
    \put(0.81381119,0.28333725){\color[rgb]{0,0,0}\makebox(0,0)[lb]{\smash{\small $\C\setminus  \overline{D}$}}}%
    \put(0.3365681,0.20066567){\color[rgb]{0,0,0}\makebox(0,0)[lb]{\smash{\ss $T_2$}}}%
    \put(0.22248726,0.05843503){\color[rgb]{0,0,0}\makebox(0,0)[lb]{\smash{\ss $T_3$}}}%
    \put(0.13887357,0.53694392){\color[rgb]{0,0,0}\makebox(0,0)[lb]{\smash{\scriptsize $\Tt_{1,-1}$}}}%
    \put(0.24233649,0.63193075){\color[rgb]{0,0,0}\makebox(0,0)[lb]{\smash{\scriptsize $\Tt_{3,0}$}}}%
    \put(0.23390294,0.68513884){\color[rgb]{0,0,0}\makebox(0,0)[lb]{\smash{\scriptsize$\Tt_{2,0}$}}}%
    \put(0.14198865,0.75137258){\color[rgb]{0,0,0}\makebox(0,0)[lb]{\smash{\scriptsize $\Tt_{1,0}$}}}%
    \put(0.22748282,0.83755053){\color[rgb]{0,0,0}\makebox(0,0)[lb]{\smash{\scriptsize$\Tt_{3,1}$}}}%
    \put(0.24385604,0.8906401){\color[rgb]{0,0,0}\makebox(0,0)[lb]{\smash{\scriptsize$\Tt_{2,1}$}}}%
    \put(0.14559761,0.95915318){\color[rgb]{0,0,0}\makebox(0,0)[lb]{\smash{\scriptsize$\Tt_{1,1}$}}}%
    \put(0.22624816,1.04837025){\color[rgb]{0,0,0}\makebox(0,0)[lb]{\smash{\ss $\Tt_{3,2}$}}}%
    \put(0.43129354,0.59953073){\color[rgb]{0,0,0}\makebox(0,0)[lb]{\smash{\ss $\delta_0$}}}%
    \put(0.42865332,0.81264801){\color[rgb]{0,0,0}\makebox(0,0)[lb]{\smash{\ss $\delta_1$}}}%
    \put(0.4226708,1.01549079){\color[rgb]{0,0,0}\makebox(0,0)[lb]{\smash{\ss $\delta_2$}}}%
    \put(0.43049579,0.92256981){\color[rgb]{0,0,0}\makebox(0,0)[lb]{\smash{$\widetilde{S}_1$}}}%
    \put(0.43161646,0.71636576){\color[rgb]{0,0,0}\makebox(0,0)[lb]{\smash{$\widetilde{S}_0$}}}%
    \put(0.03153577,0.1190464){\color[rgb]{0,0,0}\makebox(0,0)[lb]{\smash{$\delta$}}}%
  \end{picture}%
\endgroup%
\end{center}
\caption{\small Logarithmic coordinates. The lift $\widetilde{f}$ is only defined on the tracts. Each of them is mapped conformally onto the set $\widetilde{H}$.}\label{logcoordinatesnew2}
\end{figure}

The following properties hold (see \cite{R3S}):
\begin{itemize}
\item $\Ht$ is a $2\pi i$ periodic domain containing a right half plane;
\item every $\Tta$ is an unbounded Jordan domain with real parts bounded from  below, but unbounded from above (i.e., unbounded to the right);
\item the $\Tta$ accumulate only at infinity;
\item $\TTt$ is invariant under translation by $2\pi i $.
\end{itemize}

\pagebreak 

\noindent\emph{Lift of $f$}

Because $f\circ\exp$ is a universal covering on each $\Tta$, it is possible to lift $f$ to a continuous function  $\ft:\TTt\ra\Ht$ which makes the following diagram commute.
\begin{displaymath}
\xymatrix
{
\TTt \ar[r]^{\ft} \ar[d]_\exp &  \Ht \ar[d]^{\exp} \\
\TT \ar[r]^{f} & \C\setminus \Dbar 
}
\end{displaymath}
On each of the infinitely many connected  components $\Tta$ of $\TTt$, the lift $\ft|_\Tta$ is defined up to translation by multiples of $2\pi i$; so $\ft:\TTt\ra\Ht$ is defined up to infinitely many constants.
Let us fix any choice of $\ft$ on each of the tracts $\Tt_{\alpha,0}$, and then choose the remaining constants so that $\ft$ is $2\pi i $ periodic. 

The preimages of $\{\dtj\}$ under  $\widetilde{f}$ divide each $\Tta$ into \emph{fundamental domains} $\Ft_{\alpha, i,j}$. Choose the labeling of fundamental domains so that  $\ft(\Ft_{\alpha,i,j}))=\St_j$ for each $\alpha,i$.
Because we chose the lift $\ft$ to be periodic, the labeling on the fundamental domains for $\ft$ induces a well defined labeling for the fundamental domains for $f$, in such a way that $\exp(\Ft_{\alpha, i, j})= F_{\alpha,j}$ for each $i$ (see Figure~\ref{Label}).

\begin{figure}[hbt!]
\begin{center}
\def\svgwidth{0.8\textwidth}
\begingroup%
  \makeatletter%
  \providecommand\color[2][]{%
    \errmessage{(Inkscape) Color is used for the text in Inkscape, but the package 'color.sty' is not loaded}%
    \renewcommand\color[2][]{}%
  }%
  \providecommand\transparent[1]{%
    \errmessage{(Inkscape) Transparency is used (non-zero) for the text in Inkscape, but the package 'transparent.sty' is not loaded}%
    \renewcommand\transparent[1]{}%
  }%
  \providecommand\rotatebox[2]{#2}%
  \ifx\svgwidth\undefined%
    \setlength{\unitlength}{1124.19941406bp}%
    \ifx\svgscale\undefined%
      \relax%
    \else%
      \setlength{\unitlength}{\unitlength * \real{\svgscale}}%
    \fi%
  \else%
    \setlength{\unitlength}{\svgwidth}%
  \fi%
  \global\let\svgwidth\undefined%
  \global\let\svgscale\undefined%
  \makeatother%
  \begin{picture}(1,0.54305336)%
    \put(0,0){\includegraphics[width=\unitlength]{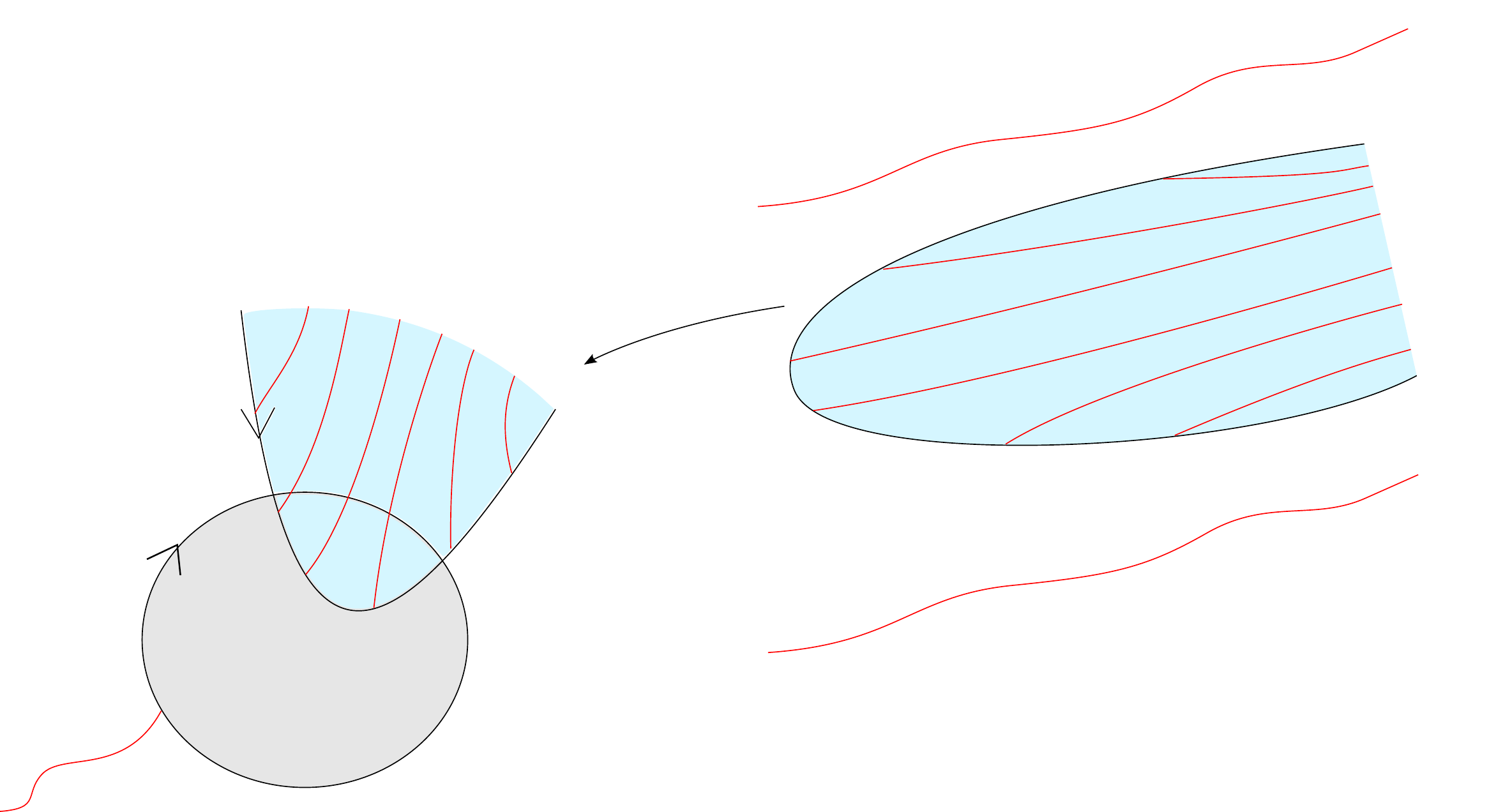}}%
    \put(0.41835568,0.33553515){\color[rgb]{0,0,0}\makebox(0,0)[lb]{\smash{$\exp$}}}%
    \put(0.13357697,0.14375095){\color[rgb]{0,0,0}\makebox(0,0)[lb]{\smash{$D$}}}%
    \put(0.27791875,0.34438555){\color[rgb]{0,0,0}\makebox(0,0)[lb]{\smash{$T_\alpha$}}}%
    \put(0.92935945,0.23394564){\color[rgb]{0,0,0}\makebox(0,0)[lb]{\smash{$\dt_{n}$}}}%
    \put(0.7404581,0.32864089){\color[rgb]{0,0,0}\makebox(0,0)[lb]{\smash{\ss $\Ft_{\alpha,n,0}$}}}%
    \put(0.73701754,0.36195675){\color[rgb]{0,0,0}\makebox(0,0)[lb]{\smash{\ss $\Ft_{\alpha,n,1}$}}}%
    \put(0.7355553,0.39492855){\color[rgb]{0,0,0}\makebox(0,0)[lb]{\smash{\ss $\Ft_{\alpha,n,2}$}}}%
    \put(0.74540385,0.29288799){\color[rgb]{0,0,0}\makebox(0,0)[lb]{\smash{\ss $\Ft_{\alpha,n,-1}$}}}%
    \put(0.74974753,0.25676236){\color[rgb]{0,0,0}\makebox(0,0)[lb]{\smash{\ss $\Ft_{\alpha,n,-2}$}}}%
    \put(0.9127875,0.53224011){\color[rgb]{0,0,0}\makebox(0,0)[lb]{\smash{$\dt_{n-1}$}}}%
    \put(0.51545453,0.36269909){\color[rgb]{0,0,0}\makebox(0,0)[lb]{\smash{$\Tt_{\alpha,n}$}}}%
    \put(0.25232777,0.25337102){\color[rgb]{0,0,0}\makebox(0,0)[lb]{\smash{\ss $F_{\alpha,0}$}}}%
    \put(0.22523118,0.2848556){\color[rgb]{0,0,0}\makebox(0,0)[lb]{\smash{\ss $F_{\alpha,1}$}}}%
    \put(0.27286809,0.22608147){\color[rgb]{0,0,0}\makebox(0,0)[lb]{\smash{\ss $F_{\alpha,-1}$}}}%
    \put(0.00623133,0.03267082){\color[rgb]{0,0,0}\makebox(0,0)[lb]{\smash{$\delta$}}}%
    \put(0.30321648,0.20515154){\color[rgb]{0,0,0}\makebox(0,0)[lb]{\smash{\ss $F_{\alpha,-2}$}}}%
    \put(0.20078503,0.30369253){\color[rgb]{0,0,0}\makebox(0,0)[lb]{\smash{\ss $F_{\alpha,2}$}}}%
  \end{picture}%
\endgroup%
\end{center}
\caption{\small Labeling of the fundamental domains.}\label{Label}
\end{figure}

\noindent\emph{Vertical order}

Because the sets $\Tta$ are unbounded to the right, each $\Tta$ divides any right half plane  sufficiently far to the right, into at least two connected components, one of which contains points with arbitrarily large imaginary part {and bounded real part} (that is  said to be \emph{above} $\Tta$  or $\succ \Tta$) and another one which contains points with arbitrarily small imaginary part {and bounded real part} (that is  said to be \emph{below} $\Tta$ or $\prec \Tta$). This introduces  a natural order $\prec$ on the $\Tta$,  called \emph{vertical order at infinity}. We may assume that $\Ttb\prec\Tta$ if and only if $\beta<\alpha$ (for a fixed $i$). 
Observe also that $\widetilde{T}_{\alpha,i}\prec \widetilde{T}_{\beta,j}$ whenever $i<j$; in this case, the order is just induced by branches of the logarithm. 
The vertical order  on the $\Tta$ induces via the exponential map a  \emph{cyclic order at infinity} (still denoted by $\prec$) on the tracts $\Ta$.   

Observe that for any given tract $\Ta$, the fundamental domains $F_{\alpha,i}$ contained in $\Ta$ also have a well defined cyclic order;  this  induces a cyclic order on  the set of all fundamental domains.

This proves part $(a)$ of the Structural  Lemma~\ref{Technical}.


\subsection{Expansion near infinity}\label{Expansion near infinity}
This Section is devoted to prove that, when restricted to a finite number of fundamental domains and far enough to the right, $\ft$ is uniformly expanding (that is, part $(b)$ of Lemma~\ref{Technical}).  Let $C_R$ be the circle of radius $R$ centered at $0$.

 Let us recall that the density $\rho_\Hyp(z)$ of the hyperbolic metric in the right half plane $\Hyp$ is $\rho_\Hyp(z)=\frac{1}{\Re z}$. Also, it is a standard estimate that the hyperbolic density on any simply connected hyperbolic domain $\Omega$ is comparable with the inverse of the distance to the boundary. More precisely, 
\[\frac{1}{2\dist(z,\partial\Omega)}\leq\rho_\Omega(z)\leq \frac{2}{\dist(z,\partial\Omega)}.\]

{The intersection of any tract $\Tt$ in logarithmic coordinates with a vertical line has connected components with diameter at most  $2\pi$, so} the maximal distance of a point from the boundary is $\pi$ and on the tract we have the inequality
\begin{equation}\label{Basic estimate} \frac{1}{2\pi}\leq\rho_{\Tt}(x).\end{equation}
Therefore for any curve $\gamma$ in $\Tt$, $\ell_{\eucl}(\gamma)\leq 2\pi\ell_{\Tt}(\gamma)$.

\begin{prop}\label{Bounded slope}
If $\ft$ is a lift of a function  $f\in\Bt$, then  the tracts of $\ft$ satisfy the \emph{bounded slope condition}, which means that there exists $M_1, M_2$ such that for any $z, w$ belonging to the same tract,
\begin{equation}\label{Bounded slope equation}
|\Im z-\Im w|\leq M_1 \max\{\Re z, \Re w,0\}+ M_2. 
\end{equation} 
\end{prop}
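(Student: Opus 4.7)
My plan is to treat the single finite-order case first, and then extend to compositions by induction. For a single $g \in \BB$ of finite order $\rho$, I would work in logarithmic coordinates and use two inputs: (i) the finite-order growth bound, which translates to $\Re \ft(\tilde z) \leq C_1 \exp(\rho' \Re \tilde z) + C_2$ on the tracts, with $\rho'$ slightly above $\rho$; and (ii) the fact that each $\ft : \Tta \to \Ht$ is a conformal isomorphism onto a domain containing a right half plane, with each tract of vertical width at most $2\pi$ (so by \eqref{Basic estimate}, Euclidean and hyperbolic lengths inside a tract are comparable up to the factor $2\pi$).

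From these two ingredients, the strategy is to compare the Euclidean geometry of a tract $\Tta$ to the hyperbolic geometry via the uniformization $\phi := (\ft|_{\Tta})^{-1}$, defined on (a subset of) a right half plane. Given two points $\tilde z_1, \tilde z_2 \in \Tta$, I would take a path $\gamma$ in $\Tta$ connecting them. Because the tract has vertical width at most $2\pi$, one can arrange that $\Re \gamma$ stays comparable to $\max(\Re \tilde z_1, \Re \tilde z_2)$ up to a bounded additive loss, while $\Im \gamma$ traverses the required vertical distance. The image $\ft(\gamma)$ has the same hyperbolic length in $\Ht$ as $\gamma$ in $\Tta$; the growth bound forces $\ft(\gamma)$ to reach real parts at most exponential in $\Re \gamma$, while the vertical excursion of $\gamma$ must reflect a correspondingly large hyperbolic diameter of $\ft(\gamma)$. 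Inverting the exponential then yields the desired inequality $|\Im \tilde z_1 - \Im \tilde z_2| \leq M_1 \max(\Re \tilde z_1, \Re \tilde z_2, 0) + M_2$. A Koebe distortion estimate applied to $\phi$ on hyperbolic discs of radius $1$ in the half plane makes this quantitative: it bounds the Euclidean size of $\phi$ of such a disc by a constant, so vertical travel in the tract must be compensated by horizontal travel at a controlled logarithmic rate.

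For $f = g_1 \circ \cdots \circ g_k$ the tracts of $f$ are obtained by successively pulling back the tracts of $g_1$ through $g_2, \ldots, g_k$, each of which is of finite order and in $\BB$. The plan is to argue by induction on $k$ that the bounded slope condition is preserved under such pullbacks, using the single-function estimate above on each factor together with Koebe distortion on the univalent inverse branches of each $g_i$ restricted to appropriate tracts of $g_{i+1} \circ \cdots \circ g_k$. The main obstacle here is that the class of finite-order functions is \emph{not} closed under composition, so $f$ itself need not have finite order and the single-function argument cannot be applied to $f$ directly. One must instead propagate the geometric bounded slope property through the composition, carefully tracking how the constants $M_1, M_2$ accumulate at each step and verifying that each pullback stays within a region where the distortion and growth estimates remain valid; this combinatorial bookkeeping, rather than any single analytic step, is where the difficulty lies.
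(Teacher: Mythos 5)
The paper gives no proof of this proposition at all: it is deduced by citation to \cite[Cor.~5.8(a),(b)]{R3S}, which in turn rests on \cite[Thm.~5.6]{R3S} (single functions of finite order) and \cite[Lemma~5.7]{R3S} (preservation under composition). So your two-step plan --- prove bounded slope for a single finite-order $g\in\BB$, then propagate it through $g_1\circ\cdots\circ g_k$ --- does track the structure of the result you would actually need to reprove, and you are right that the non-trivial point is that finite order is not preserved by composition, so one cannot apply a single-function estimate to $f$ itself.

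The sketch of the single-function step, however, has a genuine gap. You argue: large $|\Im\tilde z_1-\Im\tilde z_2|$ forces a large hyperbolic distance $d_{\Tt}(\tilde z_1,\tilde z_2)=d_{\Ht}(\ft(\tilde z_1),\ft(\tilde z_2))$ (correct, by the width bound \eqref{Basic estimate}), and then you want to conclude that $\Re\ft(\tilde z_i)$ must be large and invert the growth bound. That last step does not follow: the hyperbolic distance in $\Ht$ between two points is \emph{not} controlled by their real parts alone. It also depends on $|\Im\ft(\tilde z_1)-\Im\ft(\tilde z_2)|$ and on how close each $\ft(\tilde z_i)$ sits to $\partial\Ht$, and the finite-order growth estimate $\Re\ft(\tilde z)\le C_1 e^{\rho'\Re\tilde z}+C_2$ says nothing about either quantity. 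Concretely, two points $w_1,w_2\in\Ht$ with $\Re w_i$ bounded but $|\Im w_1-\Im w_2|$ huge, or with $w_i$ hugging $\partial\Ht$, can be hyperbolically far apart; so ``large hyperbolic diameter of $\ft(\gamma)$'' does not imply ``$\ft(\gamma)$ reaches large real parts.'' What is missing is some control of $\Im\ft$ (equivalently, of $\arg g$ along the tract), or a different geometric mechanism; this is precisely the content one would have to extract from \cite[Thm.~5.6]{R3S} rather than from the modulus growth bound alone. The same unresolved point then infects the inductive step, since the constants you want to propagate are exactly the ones whose existence in the base case is not yet established.
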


\begin{proof} 
From  \cite[Corollary 5.8 (a) and (b)]{R3S} we have that if  $\Ht$ is a  right half plane  $\mathbb{H}_r$ with $r$ sufficiently large (i.e. choosing $D=\D_{e^r}$), and after conjugating by the translation $\zt \mapsto \zt - r$,  the tracts of the new map have bounded slope. But after these changes, the new tracts  are translations of subsets of the original ones. If these modified tracts have bounded slope, the original ones must have the same property (maybe after changing the constant $M_2$), keeping in mind  that the bounded slope property is not affected by removing compact parts of the tract (see  \cite[Definition 5.1]{R3S}  and the remark thereafter and see also remarks on page 85 about normalized functions).
\end{proof}

We are now ready to prove the following proposition. For any $R>0$ let $C_R$ and $D_R$ denote respectively the circle and the disk of radius $R$ centered at $0$.

\begin{prop}
\label{Almost straight} Let $f\in\Bt$, 
and $\FF$ be a finite union of fundamental domains for $f$. Then for any $R$ sufficiently large, $f^{-1}(C_R)\cap \FF$  is contained in $D_R$.
\end{prop}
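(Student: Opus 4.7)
The plan is to pass to logarithmic coordinates and exploit that $\ft$ restricts to a conformal isomorphism $\ft:\Tta\to \Ht$ from each tract onto the simply connected domain $\Ht$. Since $|z|<R$ is equivalent to $\Re\zeta<\log R$ for any lift $\zeta=\log z$, and $|f(z)|=R$ corresponds to $\Re\ft(\zeta)=\log R$, I would reduce the statement to the following: for every logarithmic fundamental domain $\Ft\subset \Tta$ lying above some $F\in\FF$ and every $\zeta\in \Ft$ with $\Re\ft(\zeta)=\log R$, one has $\Re\zeta<\log R$ provided $R$ is large enough.

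To estimate $\Re\zeta$, I would fix a base point $\zeta_0\in \Ft$ chosen so that $w_0:=\ft(\zeta_0)$ lies in a right half-plane $\Hyp_K:=\{\Re w>K\}\subset\Ht$. For $\zeta\in\Ft$ with $w:=\ft(\zeta)$ satisfying $\Re w=\log R$, the image $\ft(\Ft)$ is a strip of imaginary width $2\pi$, so $|\Im w-\Im w_0|\le 2\pi$; hence for $R$ large the straight segment $\gamma$ from $w_0$ to $w$ stays entirely in $\Hyp_K$. Pulling $\gamma$ back by the conformal isomorphism $\ft:\Tta\to\Ht$ produces a curve $\gammah$ from $\zeta_0$ to $\zeta$ in $\Tta$, and because $\ft$ is a hyperbolic isometry between these two domains,
\[
\ell_{\Tta}(\gammah)\;=\;\ell_{\Ht}(\gamma)\;\le\;\ell_{\Hyp_K}(\gamma),
\]
the last inequality following from the monotonicity of hyperbolic densities under domain inclusion.

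The explicit half-plane distance formula yields $\ell_{\Hyp_K}(\gamma)=O(\log\log R)$, with an implicit constant depending only on $w_0$ and on the bounded imaginary range of $w$. Combining this with the basic estimate \eqref{Basic estimate} then gives
\[
|\zeta-\zeta_0|\;\le\;\ell_{\eucl}(\gammah)\;\le\;2\pi\,\ell_{\Tta}(\gammah)\;=\;O(\log\log R),
\]
so $\Re\zeta\le \Re\zeta_0+O(\log\log R)$. Exponentiating yields $|z|=e^{\Re\zeta}\le C\,(\log R)^{O(1)}$, which is strictly less than $R$ once $R$ is large. Since $\FF$ is a finite union of fundamental domains and the constants depend only on the finitely many choices of $\Ft$ and $\zeta_0$, a single $R_0$ will suffice uniformly.

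The main difficulty is to make the hyperbolic comparisons effective despite the possibly complicated geometry of $\Ht$ and of the tracts. This is resolved by two cheap inclusions: on the target side, $\Hyp_K\subset \Ht$ bounds the hyperbolic distance $d_{\Ht}(w_0,w)$ above by the explicit half-plane distance, which is only logarithmic in $\log R$; on the source side, the inclusion $\Tta\subset\Sti$ into a strip of width $2\pi$ gives the uniform lower bound $\rho_{\Tta}\ge 1/(2\pi)$ of \eqref{Basic estimate}, which converts hyperbolic length bounds on $\Tta$ into the Euclidean bounds that are needed to control $\Re\zeta$.
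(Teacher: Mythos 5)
Your strategy is structurally the same as the paper's: pass to logarithmic coordinates, use that $\ft:\Tta\to\Ht$ is a conformal isomorphism (hence a hyperbolic isometry), bound $\dist_{\Ht}$ from above by the explicit half-plane metric on a sub-half-plane, and bound $\dist_{\Tta}$ from below by $(2\pi)^{-1}$ times Euclidean distance using the $2\pi$-width of tracts. However, there is a genuine gap at the step \textquotedblleft the image $\ft(\Ft)$ is a strip of imaginary width $2\pi$, so $|\Im w-\Im w_0|\le 2\pi$\textquotedblright. The set $\St=\ft(\Ft)$ is bounded by two translates $\dt_n$ and $\dt_n+2\pi i$ of a component of $\exp^{-1}(\delta)$, so its vertical cross-section at each fixed real part has length $\le 2\pi$, but the boundary curve $\dt_n$ itself can have unbounded imaginary part as $\Re\to\infty$ (e.g.\ if $\delta$ spirals, which the construction does not rule out). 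Consequently, for $w_0$ fixed and $\Re w=\log R$ both in $\St$, the quantity $|\Im w-\Im w_0|$ need not be bounded by $2\pi$; it can grow with $\Re w$. Your inference conflates the width of a vertical cross-section with the total vertical drift of the strip.

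This is not a cosmetic omission: the correct bound, $|\Im w-\Im w_0|\le M_1\Re w+M_2+2\pi$, is exactly what the bounded slope condition (Proposition~\ref{Bounded slope}, relying on \cite[Cor.~5.8]{R3S}) supplies, and that proposition is where the finite-order hypothesis $f\in\Bt$ (as opposed to merely $f\in\BB$) actually enters. As written, your argument would prove the proposition for every $f\in\BB$, which is more than the statement claims and more than is true. The good news is that your straight-segment estimate survives the weaker, linear imaginary bound: since $|w-w_0|\lesssim\sqrt{1+M_1^2}\,\Re w$ while the minimum of the half-plane density along the segment is still of order $1/\Re w$, one still gets $\ell_{\Hyp_K}(\gamma)=O(\log\log R)$ with a constant now depending on $M_1$. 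So the proof can be repaired simply by replacing the \textquotedblleft $\le 2\pi$\textquotedblright\ assertion with the bounded slope inequality and adjusting the constant; but without that input the proof does not go through.
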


\begin{proof}
Observe that it is enough to prove the claim for one fundamental domain $F$. 
Recall the notation from Section \ref{Logarithmic coordinates}, and let $\ft$ be the lift of $f$ defined on the tracts $\Tt_{\alpha,i}$ with the conventions adopted there. Let $\Ft$ be any fundamental domain for $\ft$ such that $\exp(\Ft)=F$; it is then sufficient to prove that for any $R'$ sufficiently large, and any  $z$ with  $\Re z=R'$, we have that  $\Re \left(\ft^{-1}(z)\cap\Ft \right)<R'$ (the set $\ft^{-1}(z)\cap\Ft$ is a single point).  The proposition would then hold with $R > e^{R'}$. 

 Let $\Tt$ be the tract containing $\Ft$, and let  $\St=\ft(\Ft)$. 
 Since $\ft:\Tt\ra\Ht$ is a biholomorphism, there is a well defined inverse $\ftmT$, which is an isometry between the hyperbolic metric on $\Ht$ and the hyperbolic metric in $\Tt$. Its restriction $\ftmF: \St \ra \Ft$  is hence also an isometry between the hyperbolic metric on $\Ht$ and the hyperbolic metric in $\Tt$. 
Let $\Hyp_r=\{z\in\C \mid \Re z>r\}$ be a right half plane for some $r>0$ such that $\Hyp_r\subset \Ht$; then $\rho_{\Hyp_r}\geq\rho_\Ht$.

Let $\Tt_*$ be a tract contained in $\St$ and fix a  base point $z_0\in\Tt_*\cap\Hyp_r$. 
We first show that  there exists a constant $C$ such that for any $w\in\St$ with $\Re w>\Re z_0$

\begin{equation}\label{Bounds on strips}
|\Im z_0-\Im w|\leq M_1 \Re w+ C. 
\end{equation}\noindent
Let $X$ be the unique unbounded component of $\St\setminus L_{z_0}$, where $L_{z_0}$ is the vertical line passing through $z_0$.  Let $d$ be the diameter of the bounded set $\St\setminus X$. 
Let $\Tt_+$ and $\Tt_{-}$ be the translates of $\Tt_*$ by $+2\pi i $ and  $-2\pi i $ respectively, and let $z_{\pm}=z_0\pm 2\pi i $. 

The set $X$ is contained in the unbounded component of $\mathbb{H}_{\Re z_0}\setminus (\Tt_+\cup\Tt_-)$ which is between $\Tt_+$ and $\Tt_-$, in the sense that for any $w\in X$ there exist $w_\pm$ in $\Tt_\pm$ with $\Re w=\Re w_\pm$ and $\Im w_-<\Im w<\Im w_+$.

By Proposition~\ref{Bounded slope},  $\Tt_+$ and $\Tt_{-}$ satisfy Equation~\ref{Bounded slope equation}. Let us assume that $\Im w>\Im z_0$ (the opposite case is symmetric). Then for all $w\in X$,
\begin{align*}
|\Im z_0-\Im w| & =\Im w-\Im z_0\leq \Im w_+-\Im z_0 
 \leq |\Im w_+-\Im z_+| + |\Im z_+ -\Im z_0| \\
& \leq 2\pi+ M_1 \Re w+ M_2.
\end{align*}
%
%
%
%
Finally if $w \in \St\setminus X$ then $|\Im z_0-\Im w| \leq d$. 
Equation (\ref{Bounds on strips}) then follows with $C=\max\{2\pi+M_2,d\}$.

Now let $z_0'$ be the  unique preimage of $z_0$ in $\Ft$.  We will show the claim by proving that for any $R'>1$, and any point $z\in\St$ with $\Re z=R'+\Re z_0$, we have $|z-z_0|\geq R'$ while $|\widetilde{f}^{-1}(z)\cap \Ft -z_0'| \sim \ln R'$. 
By direct computation, and using (\ref{Bounds on strips}) with $w=z$,
\begin{align*}
\dist_{\Ht}(z_0, z)& \leq \int_{\Re z_0}^{\Re z}\frac{1}{t-r}dt+\left|\int_{\Im z_0}^{\Im z}\frac{1}{\Re z-r}dt\right|=
\ln\left(\frac{ R'+\Re z_0-r}{\Re z_0-r}\right) +\frac{|\Im z_0-\Im z|}{R'+\Re z_0-r}\leq\\
&\leq \ln R'+k,
\end{align*}
where $k$ is a constant. 
Since $\ft$ is an isometry for the hyperbolic metric between $\Ht$ and $\Tt$, we have that 
\[
\dist_{\Tt}(z_0', f^{-1}(z)\cap \Ft)\leq \ln R'+ k, 
\]
 and by Equation~(\ref{Basic estimate}), $|z_0'- (f^{-1}(z)\cap \Ft ) |\leq 2\pi (\ln R'+ k)$.
\end{proof}

This concludes the proof of part (b) in Lemma~\ref{Technical}.

\subsection{Dynamic rays and their properties} \label{raysandprop}
Here we state a result from \cite{R3S} about existence of dynamic rays for $\ft$ (recall that $\ft$ is only defined on $\TTt$). Then, we prove a theorem about existence of dynamic rays for $f$ corresponding to part $(c)$ of the Structural Lemma \ref{Technical}.

We define the  escaping set  \[I(\ft):=\{\zt { \in \TTt}; \Re\ft^n(\zt)\ra\infty \text{ as } n\ra\infty \}.\]

The following definition of ray tail and dynamic ray is  from \cite[Definition 2.2]{R3S}.

\begin{defn}
Let $\ft$ be the lift of a function $f$ in class $\BBt$. A \emph{ray tail} for $\ft$ is an injective curve $\Gt:[t_0,\infty)\ra I(\ft)$ such that $\lim_{t\ra\infty}\Re \ft^n(\Gt(t))=+\infty$ for any $n>0$ and such that $\Re \ft^n(\Gt(t))\ra+\infty$ uniformly in $t$ as $n\ra\infty$.

Likewise ray tails can be defined for $f$. 
Observe that with this definition a  non-broken dynamic ray for  $f$ becomes a maximal injective curve $g(t):(0,\infty)\ra I(f)$ such that $g|_{[t,\infty)}$ is a ray tail for every $t>0$ and such that no iterate of $g(t)$ is a critical point. 
\end{defn}

Let us  define \[ J(\ft):=\{\zt  \in \TTt \mid  \ft^n(\zt) \text{ is defined for all $n$}\}.\]

Observe that $J(\ft)$ includes orbits which escape to infinity but also  any periodic orbit whose projection does not intersect $D$. 

For points $\zt\in J(\ft)$, we can naturally introduce symbolic dynamics using the labeling of the tracts (see \cite{DT}, \cite{R3S},\cite{Ba}): if  for $n\geq 0$ we have  $\ft^n({\zt})\in \Tt_{\alpha_n, i_n} $, we say that $\zt$ has \emph{ address}
\[\st:=\binom{\alpha_0}{i_0}\binom{\alpha_1}{i_1}... \binom{\alpha_n}{i_n} ...\]
and introduce the left-sided shift map $\sigma$ as
 \[
 \sigma:\binom{\alpha_0}{i_0}\binom{\alpha_1}{i_1}...\mapsto \binom{\alpha_1}{i_1}\binom{\alpha_2}{i_2}... .
 \] 
 A ray tail $\Gt$   for $\ft$   is called  a  \emph{\emph{ray tail} of address $\st$ and is denoted by $\Gt_\st$} if it  consists of points with address $\st$. Notice that this is well defined since every ray tail is in $J(\ft)$ and hence belongs entirely to a unique fundamental domain. 
 We define the set 
\[
J_{\st}(\ft):=\{\zt\in J(\ft) \mid \zt\text{ has address $\st$} \}
\] 
From the definition of $J_\st$, it follows that $\ft(J_\st)\subset J_{\sigma\st}$;
 in particular, if an address $\st$ is periodic of period $m$, we have that $\ft^m(J_\st)\subset J_\st$. 

The next theorem is the restatement of one of the main results from  \cite{R3S}.  Let us define the set 
\begin{equation}\label{JK}
 J^K_{\st}(\ft):=\{\zt\in J_\st(\ft) \mid  \Re \ft^n(\zt)\geq K, \text{\ for all $ n\in\N$} \}.
 \end{equation}

\begin{thm}[\bf Ray tails for $\ft$]\label{Transcendental rays}
Let $\ft$ be the lift of a function $f\in\BBt$. Then for any address $\st$  for which $J^K_{\st}(\ft)\neq\emptyset$, the set  $J_\st(\ft)$ contains a unique maximal   unbounded arc $\Gt_\st(t)$, {which consists of escaping points (except for at most its endpoint), where $t\in [0,\infty)$ or $t\in (0,\infty)$.}   For any $t_*>0$ the  restriction of  $\Gt_\st$ to the interval $[t_*,\infty]$  is a  ray tail  of address $\st$ for $\ft$.  
\end{thm}

Theorem~\ref{Transcendental rays} is not stated in this terms  in \cite{R3S}, so we sketch a proof using results and terminology from the source.
\begin{proof}From Theorem 5.6 and Lemma 5.7 in \cite{R3S}, $\ft$ satisfies a uniform linear head start condition (uniform head start and uniform bounded wiggling are equivalent by Proposition 5.4).
By Theorem 3.3 and Proposition 4.4 (b) in \cite{R3S} there exists a constant $K(\ft)$ such that if $K>K(\ft)$ and the set $J^K_{\st}(\ft)$ is non empty, then $J_\st(\ft)$ has a unique unbounded component, which is a closed arc tending to infinity on one side.  From their Corollary 4.5, this arc consists of escaping points  (except for at most the endpoint).   For any $t_*>0$ the  restriction of  $\Gt_\st$ to the interval $[t_*,\infty]$  satisfies the hypothesis of being a {ray tail} by the head start condition, and consists of points of address $\st$ by definition.  
\end{proof} 


We now establish a correspondence between existence of ray tails for $\ft$ and existence of ray tails for $f$. This discussion can been seen as a natural follow up of the results in \cite{R3S} but it is not contained there.

Given a point $z$ in the $f$-plane, whose iterates always belong to $\TT$, and such that $f^n(z)\in F_{\alpha_n,i_n}$, we define its \emph{address} $s$ as

\[
s=\binom{\alpha_0}{i_0}\binom{\alpha_1}{i_1}...\ . 
\]

The left-sided shift map $\sigma$ is well defined, and a point with address $s$ is mapped to a point with address $\sigma s$.

\begin{lem}[\bf Correspondence of addresses] \label{Correspondence of addresses} Let $f\in\Bt$, $\ft$ be a periodic lift of $f $ as described in {Section~\ref{Logarithmic coordinates}}. A point $\zt\in J(\ft)$ has  address $\st=\binom{\alpha_0}{i_0}\binom{\alpha_1}{i_1}...$ if and only if $z=\exp(\zt)$ has  address  $\s=\binom{\alpha_0}{i_1}\binom{\alpha_1}{i_2}...$.
\end{lem}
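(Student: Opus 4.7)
The plan is to unwind the labeling conventions set up in Section~\ref{Logarithmic coordinates} and observe that the one-step index shift in the statement is forced by an elementary compatibility between strips and fundamental domains. Recall that for each pair $(\alpha,i)$ the tract $\Tt_{\alpha,i}=L_i(T_\alpha)$ is contained in the strip $\St_i$, that each fundamental domain of $\ft$ has the form $\Ft_{\alpha,i,j}\subset\Tt_{\alpha,i}$ with $\ft(\Ft_{\alpha,i,j})=\St_j$, and that via the exponential map $\exp(\Ft_{\alpha,i,j})=F_{\alpha,j}$.

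First I would prove the direct implication. Assume $\zt\in J(\ft)$ has address $\st=\binom{\alpha_0}{i_0}\binom{\alpha_1}{i_1}\cdots$, so $\ft^n(\zt)\in\Tt_{\alpha_n,i_n}$ for every $n\geq 0$. For each $n$, let $j_n$ be the unique integer with $\ft^n(\zt)\in\Ft_{\alpha_n,i_n,j_n}$. Then on one side $\ft^{n+1}(\zt)\in\ft(\Ft_{\alpha_n,i_n,j_n})=\St_{j_n}$, and on the other side $\ft^{n+1}(\zt)\in\Tt_{\alpha_{n+1},i_{n+1}}\subset\St_{i_{n+1}}$; since the strips $\{\St_k\}_{k\in\Z}$ are pairwise disjoint, this forces $j_n=i_{n+1}$. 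Applying $\exp$ and using the semiconjugacy $\exp\circ\ft=f\circ\exp$ we then obtain $f^n(z)=\exp(\ft^n(\zt))\in\exp(\Ft_{\alpha_n,i_n,i_{n+1}})=F_{\alpha_n,i_{n+1}}$ for every $n\geq 0$, which is precisely the statement that $z=\exp(\zt)$ has address $s=\binom{\alpha_0}{i_1}\binom{\alpha_1}{i_2}\cdots$.

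For the converse, let $\zt\in J(\ft)$ and let its (uniquely determined) address be $\st'=\binom{\alpha'_0}{i'_0}\binom{\alpha'_1}{i'_1}\cdots$. The direct implication applied to $\zt$ itself shows that $z=\exp(\zt)$ has address $\binom{\alpha'_0}{i'_1}\binom{\alpha'_1}{i'_2}\cdots$. If by hypothesis $z$ also has address $s=\binom{\alpha_0}{i_1}\binom{\alpha_1}{i_2}\cdots$, uniqueness of the address of $z$ forces $\alpha'_n=\alpha_n$ for all $n\geq 0$ and $i'_n=i_n$ for all $n\geq 1$, while the remaining entry $i'_0$, which the address of $z$ cannot see, is exactly the index of the strip containing $\zt$ and hence matches the $i_0$ in the first symbol of $\st$ that encodes the chosen lift. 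The only real subtlety throughout is keeping straight the two distinct roles played by the integer indices in $\Ft_{\alpha,i,j}$: the middle slot records which lift of the base tract $T_\alpha$ the domain sits over, while the last slot records the image strip under $\ft$; once this is made explicit, the whole lemma boils down to the identity ``image strip at level $n$ equals containing strip at level $n+1$''.
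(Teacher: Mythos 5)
Your proof is correct and, for the forward implication, coincides with the paper's argument: both rest on the single compatibility ``the image strip of $\ft^n(\zt)$ under $\ft$ must be the strip containing $\ft^{n+1}(\zt)$,'' which forces $\ft^n(\zt)\in\Ft_{\alpha_n,i_n,i_{n+1}}$ and hence $f^n(z)\in F_{\alpha_n,i_{n+1}}$. Your converse is packaged slightly differently (bootstrapping from the forward implication via uniqueness of addresses) while the paper argues directly from $\exp^{-1}(F_{\alpha_n,i_{n+1}})=\{\Ft_{\alpha_n,m,i_{n+1}}\}_{m\in\Z}$ and $\Ft_{\alpha_n,m,i_{n+1}}\subset\Tt_{\alpha_n,m}$, but the underlying content is the same.
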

\begin{proof}
If $\zt\in \Tt_{\alpha_n,i_n}$ and $\ft(\zt)\in \Tt_{\alpha_{n+1},i_{n+1}}\subset \St_{i_{n+1}}$, then $\zt\in \Ft_{\alpha_n,i_n,i_{n+1}}$ by the choice of lift $\ft$. Hence $z=\exp(\zt)\in F_{\alpha_n, i_{n+1}}$ by the choice of labeling for the fundamental domains. 
For the converse, observe that $\exp^{-1}(F_{\alpha_n,i_{n+1}})=\{\Ft_{\alpha_n, m, i_{n+1}}\}_{m\in\Z}$ and $\Ft_{\alpha_n, m, i_{n+1}}\subset \Tt_{\alpha_n,m}$. 
\end{proof}

A ray tail $G$ of $f$ is said to have address $s$ and is denoted by $G_\s$  if the points $G(t)$ have address $\s$ for $t$ large enough (see Lemma \ref{asymptotically contained}). A dynamic ray is said to have address $\s$ 
whenever it contains a tail of address $\s$.

\begin{prop}[\bf Rays for $f$]\label{Rays for f} Let $f\in\Bt$, $\ft$ be a periodic lift of $f $ as described in Section~\ref{Logarithmic coordinates}. If a ray tail $G_{\st_m}$ exists, where $\st_m=\binom{\alpha_0}{m}\binom{\alpha_1}{i_0}...$, and $m\in\Z$, then there exists a unique dynamic  ray $g_\s$ with address  $\s=\binom{\alpha_0}{i_0}\binom{\alpha_1}{i_1}...$, and it contains the projection of $G_{\st_m}$ under the exponential map.
\end{prop}

\begin{proof} The image of any ray tail  $G_{\st_m}$ under the exponential map is a ray tail $G_{s}$ for $f$,  consisting of points of address $s$ by Lemma \ref{Correspondence of addresses}. 
{Then, by Zorn's lemma,  $G_{s}$ (possibly minus its endpoint)  is contained in a dynamic ray $g$ which by definition has address $s$. Hence  $g_s\neq\emptyset$. The ray is unique, by the uniqueness part in Theorem \ref{Transcendental rays}.}
\end{proof}

  The next proposition ensures that rays for $f$ exist for any address $s$ which contains only finitely many symbols, by showing that  the set $J^K_{\st}(\ft)$ defined in Equation~\ref{JK} is non empty. {The proof uses  ideas from \cite{DT}, results from \cite{R3S}, and Lemma~\ref{Technical} (b). This result was previously proven in \cite{DT} for all functions with a finite number of singular values whose tracts satisfy a certain geometric condition.}

\begin{prop}\label{Existence for finitely many symbols}
If $f\in\Bt$ and $s$ is an address which contains   only finitely many different symbols then there exists  a  unique {dynamic} ray $g_s$ with address $s$ for $f$.
\end{prop}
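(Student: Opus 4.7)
By Proposition~\ref{Rays for f} it suffices to produce a ray tail for $\ft$ of address $\st_m=\binom{\alpha_0}{m}\binom{\alpha_1}{i_0}\binom{\alpha_2}{i_1}\cdots$ for some $m\in\Z$. Since $s$ uses only finitely many distinct symbols, so does $\st_m$, and an orbit of address $\st_m$ visits only finitely many fundamental domains $\Ft^0,\Ft^1,\Ft^2,\dots$; call their (finite) union $\FF$. By Theorem~\ref{Transcendental rays} (through the auxiliary set $J^K_{\st_m}(\ft)$ appearing in its proof) it is then enough to exhibit one point $\zt$ of address $\st_m$ whose forward orbit satisfies $\Re\ft^n(\zt)\ge K$ for every $n$, for some $K>K(\ft)$.

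The key input is a forward expansion coming out of Lemma~\ref{Technical}(b). The Almost-Straight statement says the preimage curve $\ft^{-1}(\{\Re w=R_0\})\cap\Ft$ lies in $\{\Re z<R_0\}$ for each $\Ft\in\FF$; combined with the elementary observation that $\Re\ft(z)\to+\infty$ as $\Re z\to+\infty$ inside a tract (since $\ft$ is a conformal iso onto $\St$ and $\St$ is unbounded only to the right), connectedness of $\Ft\cap\{\Re z\ge R_0\}$ upgrades this to
\[
\Ft\in\FF,\ z\in\Ft,\ \Re z\ge R_0 \quad\Longrightarrow\quad \Re\ft(z)>R_0,
\]
for every $R_0$ sufficiently large, which we fix once and for all with $R_0>K(\ft)$. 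Thus any $\zt\in\Ft^0$ with $\Re\zt\ge R_0$ whose itinerary is prescribed by $\st_m$ automatically satisfies $\Re\ft^n(\zt)\ge R_0$ for every $n$, hence lies in $J^{R_0}_{\st_m}(\ft)$.

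To construct such a $\zt$, set $\Phi_n:=\ft^{-1}_{\Ft^0}\circ\cdots\circ\ft^{-1}_{\Ft^{n-1}}$, a conformal branch from a strip $\St_{i_n}$ into $\Ft^0$. Pick $\zeta_n\in\Ft^n$ whose real part grows like a tower of exponentials in $n$ (e.g.\ $\Re\zeta_n\sim\exp^{(n)}(R_0)$), and put $w_n:=\Phi_n(\zeta_n)\in\Ft^0$. The explicit bound from the proof of Proposition~\ref{Almost straight} shows that one inverse branch displaces points by at most $O(\ln\Re z)$ from a fixed reference point in $\Ft$; iterating this estimate $n$ times against iterated-exponential growth of $\Re\zeta_n$ keeps $\Re w_n$ in a bounded interval $[R_0,M]$ uniformly in $n$. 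Since $\Ft^0$ has imaginary width at most $2\pi$, the sequence $(w_n)$ lies in a compact set and, after passing to a subsequence, $w_n\to\zt$. The closed conditions $\ft^k(\cdot)\in\overline{\Ft^k}$ (for $k\le n$) pass to the limit; the strict forward expansion of the previous paragraph rules out $\zt$ landing on a lateral boundary of any $\Ft^k$ (any such boundary point would produce forward orbits entering $\{\Re<R_0\}$, contradicting the construction), so $\ft^k(\zt)\in\Ft^k$ for every $k$, giving $\zt\in J^{R_0}_{\st_m}(\ft)$, as required. Proposition~\ref{Rays for f} then returns the sought ray $g_s$ for $f$.

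The main obstacle is the simultaneous lower and upper bound on $\Re w_n$: iterated inverse branches contract real parts very strongly, so one must calibrate the growth of $\Re\zeta_n$ so that $\Re w_n$ neither escapes to infinity (killing compactness) nor drops below $R_0$ (disabling the forward-expansion trigger). This calibration is what makes essential use of the explicit hyperbolic-distance estimate recorded in the proof of Proposition~\ref{Almost straight}, and it is the only place in the argument where the finiteness of the symbol set is genuinely needed.
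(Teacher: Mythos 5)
Your plan is genuinely different from the paper's: the paper builds the orbit implicitly by nesting the compact sets $X_N\cap C_R$, where
$X_N=\{z\in F_0;\ f^j(z)\in F_j,\ |f^j(z)|\ge R,\ j=0,\dots,N\}$,
and using that each inverse branch $\psi_n$ preserves the property \emph{unbounded connected set that intersects $C_R$} (Almost Straight gives $\psi_n(C_R)\subset D_R$, and continuity/unboundedness force the image to cross $C_R$ again); Cantor's intersection theorem on the nested compacta $X_N\cap C_R$ then produces a point without ever trying to locate it. You instead try to track explicit points $w_n=\Phi_n(\zeta_n)$ and control $\Re w_n$ by calibrating $\Re\zeta_n$. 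That is a legitimate-sounding alternative, but as written it has a gap.

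The gap is the lower bound $\Re w_n\ge R_0$. The estimate you invoke from the proof of Proposition~\ref{Almost straight} is the hyperbolic inequality
$\dist_{\Ht}(z_0,z)\le \ln R'+k$ (for $\Re z=R'+\Re z_0$), transported to
$|z_0'-\ft^{-1}_{\Ft}(z)|\le 2\pi(\ln R'+k)$. This is a \emph{one-sided} statement: it confines the preimage to a Euclidean disc about $z_0'$, so it controls $\Re\ft^{-1}_\Ft(z)$ from above, but it gives no lower bound better than the global lower bound on real parts in the tract (call it $c_0$), which in general is much smaller than $R_0$. Consequently nothing in your argument prevents $\Re w_n$, or $\Re\zt$ after taking the subsequential limit, from being in $[c_0,R_0)$, in which case $\zt\notin J^{R_0}_{\st_m}(\ft)$ and the application of Theorem~\ref{Transcendental rays} through $J^K_{\st}$ fails. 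The iterated-exponential calibration does yield the uniform \emph{upper} bound $\Re w_n\le M$ (and hence compactness, since the imaginary width is at most $2\pi$), but that alone does not place the limit to the right of $R_0$. This is exactly the difficulty the paper's set-theoretic argument is designed to sidestep: by tracking unbounded connected sets that always intersect $C_R$, the resulting point is automatically pinned to $|z|=R$, i.e.\ $\Re\zt=\log R$, so no lower bound has to be proved.

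A secondary issue: the ``forward expansion'' statement $\Re z\ge R_0,\ z\in\Ft\in\FF\Rightarrow\Re\ft(z)>R_0$ is not an immediate consequence of Proposition~\ref{Almost straight}. Almost Straight says that the single level set $\{z\in\Ft:\Re\ft(z)=R_0\}$ lies in $\{\Re z<R_0\}$; to upgrade this to the half-plane statement you need $\Ft\cap\{\Re z\ge R_0\}$ to be connected (so the sign of $\Re\ft(z)-R_0$ is constant there), and fundamental domains need not have this property; they may wiggle in and out of a vertical line. This can probably be repaired (e.g.\ by arguing on $\St\cap\{\Re w\le R_0\}$ and its pullback, with an extra hypothesis on $\delta$), but it is an additional argument, not something you can read off the statement of Almost Straight as it stands.

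If you want to keep the ``pull back far-right points'' flavour, the fix is essentially to reinstate the paper's invariant: rather than single points $\zeta_n$, pull back $\Ft^n\setminus D_R$ (an unbounded connected set meeting $C_R$); by Almost Straight the pullback again meets $C_R$, and the nested compacta on $C_R$ give the point. At that stage you have reproduced the paper's proof.
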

\begin{proof} 
Let $s=\binom{\alpha_0}{i_0}\binom{\alpha_1}{i_1}\ldots$, and let $J_s^K(f)$ be the set of points of address $s$ such that $|f^n(z)|\geq e^K$ for all $n\in\N$. By definition of $\ft$ and by Lemma~\ref{Correspondence of addresses}, $J_s^K(f)=\exp J_\st ^K(\ft)$ (as defined in Equation~\ref{JK}) for any $\st=\binom{\alpha_0}{m}\binom{\alpha_1}{i_0}...$, $m\in\N$. By Theorem 3.3 and Proposition 4.4 (b) in \cite{R3S} (see also the proof of Theorem~\ref{Transcendental rays} in this paper) there exists $K(\ft)$ such that if $K>K(\ft)$ and   $J_{\st}^K(\ft)$  in not empty then a ray tail of address $\st$ exists. 
We start by showing that for some  $K>K(\ft)$ the set  $J_{s}^K(f)$ is not empty provided $s$ contains only finitely many symbols. 
By definition a point $z$ has address $s$ if and only if $f^n(z)\in F_{\alpha_n, i_n}$. To simplify notation let us call $F_n:= F_{\alpha_n, i_n}$. If $s$ contains only finitely many different symbols, all the $F_n$ belong to some finite collection $\FF$ of fundamental domains. Let $R>e^{K(\ft)}$ be  such that Proposition~\ref{Almost straight} holds for the family $\FF$. Let $C_R$ be the circle of radius $R$, $D_R$ be the open disk of radius $R$, both centered at $0$. Note  that $F_n\cap C_R\neq\emptyset $ for all $F_n\in\FF$. For each $F_n\in\FF$ define $\psi_n$ as the unique inverse branch of $f$ mapping $\C\setminus{(\ov{D}\cup\delta)}\ra F_n$. By definition each $\psi_n$ is univalent. All $\psi_n$ also have the following property. Let $Y$ be any unbounded connected {subset of $\C \setminus (\overline{D} \cup \delta)$}; since  $\psi_n$ is continuous  and it is the inverse of an entire function,  its image $\psi_n(Y)$ is also  an  unbounded connected set. Then,  if  $Y\cap C_R\neq\emptyset$, we also have that $\psi_n (Y)\cap C_R \neq \emptyset$   because by Proposition~\ref{Almost straight}  $\psi_{n}(Y)\cap D_R\neq\emptyset$. 
 We refer to this property as \emph{$\psi_n$ preserves unboundedness and intersection with $C_R$.}
 
For each $N\in\N$ define the set 

$$X_N=\{z\in F_0| f^j(z)\in F_j \text{ and } |f^j(z)|\geq R\ \forall\; j=0\ldots N\}.$$

By construction, $\ov{X}_{N+1}\subset X_N$, where the closure is taken in $\C$. 
We now  show that $\bigcap_N X_N\neq\emptyset$ by  additionally showing  that $X_N\cap C_R\neq\emptyset$  for all $N$. 
 Observe that for each $N$ the set $X_N$ is obtained by intersecting $F_N$ with $\C\setminus D_R$, then applying $\psi_{N-1}$, then intersecting again with $\C\setminus D_R$, applying $\psi_{N-2}$ and so on. 
Fix $N$. Since $F_N$ is an unbounded connected set and $C_R$ is compact, $F_N\setminus D_R$ contains at least one unbounded connected component which intersects $C_R$ by definition. So since $\psi_{N-1}$  preserves unboundedness and intersection with $C_R$ {(as defined above)}, the set $\psi_{N-1}(F_N\setminus D_R)$ also contains at least one unbounded connected component intersecting $C_R$. By induction, this property is preserved at each step and $X_N$ also contains an unbounded connected component intersecting $C_R$. Observe that $X_{N+1}\cap C_R$ is compactly contained in  $X_{N}\cap C_R$,  so that $\bigcap_N X_N\neq\emptyset$.

Now let $z\in \cap_N X_N$. Since $R>e^{K}$,  $\bigcap_N X_N \subset J_s^{K}(f) $. By definition $z$ has address $s$, so by Lemma~\ref{Correspondence of addresses} (observe that having an address implies that $f^n(z)\in\TT$ for all $n\in\N$) there exists a point $\zt\in\exp^{-1}z$ with address $\st=\binom{\alpha_0}{0}\binom{\alpha_1}{i_0}...$ such that 
$\Re \ft^n(z)\geq K\geq K(\ft)$ for all $n$. So $J_\st^K(\ft)\neq\emptyset$ and there exists a ray tail for $\ft$ with address $\st$. By Proposition~\ref{Rays for f}, there exists a unique dynamic ray for $f$ with address  $\s$.
\end{proof}

\noindent
Finally we show that each fundamental domain  $F$  contains  asymptotically exactly one fixed ray. 
\begin{prop}[\bf Fixed rays in fundamental domains]\label{Fixed rays in fundamental domains} {Let $f\in\BBt$. }
For each fundamental domain $F$, there is a unique fixed  dynamic  ray $g_{F}$ such that $g_{F}(t)\in F$ for all sufficiently large $t$.
\end{prop}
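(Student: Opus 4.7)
My plan is to construct $g_F$ as the unique dynamic ray attached to the constant address supported by $F$. Write $F = F_{\alpha,i}$ and consider the periodic $f$-address $s = \binom{\alpha}{i}\binom{\alpha}{i}\cdots$, which uses only one symbol. Proposition \ref{Existence for finitely many symbols} then yields a dynamic ray $g_s$ of address $s$, and by the very definition of an address we have $g_s(t) \in F_{\alpha,i} = F$ for all $t$ sufficiently large, giving the asymptotic containment.

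To see that $g_s$ is fixed I will argue as follows. For $t$ large the tail of $g_s$ lies in $F$, and every forward iterate of any point on that tail also lies in $F$ (again by the definition of $s$). Since $f|_F$ is univalent, the image of such a tail under $f$ is an injective unbounded escaping curve whose iterates remain in $F$; in other words, it is a ray tail of address $\sigma s = s$. Uniqueness of the $f$-ray of a given address then forces this ray tail to be contained in $g_s$ itself, so $f(g_s) \subset g_s$, i.e.\ $g_s$ is fixed.

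For uniqueness in the proposition, suppose $g'$ is any fixed ray asymptotically contained in $F$. Choose $t_1$ so that $g'(t) \in F$ for all $t \geq t_1$. Being fixed, $f$ sends the tail $g'([t_1,\infty))$ into itself, and since $f|_F$ is univalent this tail together with all its forward iterates remains in $F$. Hence $g'$ has address $s$, and by the same uniqueness of rays of address $s$ we conclude $g' = g_s$.

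The only step that requires a small extra argument is the uniqueness of the $f$-ray of a given address, which is not stated as a standalone result in the paper; I expect this to be a brief consequence of the construction via lifts. Any two $f$-rays of address $s$ lift, through suitable branches of $\log$, to unbounded injective escaping arcs in $J_{\st_m}(\ft)$ for some $m \in \Z$, with $\st_m = \binom{\alpha}{m}\binom{\alpha}{i}\cdots$ as in Proposition \ref{Rays for f}, and Theorem \ref{Transcendental rays} guarantees that $J_{\st_m}(\ft)$ contains a unique such arc. This is the only mildly subtle point; everything else reduces to bookkeeping with addresses.
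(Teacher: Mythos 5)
Your overall plan — build $g_F$ as the $f$-ray of the constant address $s=\binom{\alpha}{i}\binom{\alpha}{i}\cdots$ via Proposition~\ref{Existence for finitely many symbols}, deduce fixedness from $\sigma s=s$, and get uniqueness from uniqueness of the ray of a given address via the lift and Theorem~\ref{Transcendental rays} — is exactly the route the paper has in mind, and it is sound.

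There is however one claim in the uniqueness step that is asserted but not justified, and the justification you offer does not do the job. You write that, since $g'$ is fixed, ``$f$ sends the tail $g'([t_1,\infty))$ into itself, and since $f|_F$ is univalent this tail together with all its forward iterates remains in $F$.'' Knowing only $f(g')\subset g'$ does \emph{not} tell you that points with parameter $\geq t_1$ are mapped to points with parameter $\geq t_1$; a priori $f$ could send a tail point back to a small-parameter (non-$F$) part of $g'$. Univalence of $f|_F$ says nothing about this. What you actually need is that $f$ moves parameters outward along the ray. One way to get it: write $f(g'(t))=g'(\phi(t))$; then $\phi$ is continuous and injective on a tail (so monotone), and $\phi(t)\to\infty$ by escape, hence $\phi$ is increasing. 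If $\phi(t_0)<t_0$ for some $t_0$, the decreasing sequence $\phi^n(t_0)$ converges to an $\ell$ with $\phi(\ell)=\ell$, so $g'(\ell)$ would be a fixed point of $f$ lying on an escaping curve — impossible. Hence $\phi(t)\geq t$, and for $t\geq t_1$ all forward iterates stay on $g'([t_1,\infty))\subset F$, giving $g'$ address $s$. (Alternatively, the uniform escape in the definition of a ray gives this directly.) Once you insert this, the uniqueness argument is complete. A smaller point: in the existence step you conclude ``$f(g_s)\subset g_s$'' having only shown $f$ of a \emph{tail} lies in $g_s$; you should note that, by maximality of rays and uniqueness of the ray of address $s$, this suffices to identify $f(g_s)$ with a sub-tail of $g_s$.
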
 

\begin{proof}
Existence and uniqueness follow from Proposition \ref{Existence for finitely many symbols}. Due to its address and uniqueness, the ray is invariant and  asymptotically contained in $F$.
\end{proof}
\noindent
This concludes the proof of the Structural Lemma~\ref{Technical}, part (c). 

\section{Location of interior fixed points. Proof of Proposition A}
\label{Where are interior periodic points?}

Let us now fix a function $f\in\BBt$, and restrict our discussion to fixed points and fixed rays. From now on, we assume that all fixed rays land.  Let $D$, $\delta$, etc. be as in Section~\ref{structure} (see Figure \ref{tractsandfd}).

We first prove a proposition about landing of fixed rays for fundamental domains which do not intersect $D$, and then collect some remarks about the structure of the plane in logarithmic coordinates and the distribution of the interior fixed points with respect to the partition of the plane induced by the fixed rays.

As tracts and fundamental domains accumulate only at infinity,  only finitely many fundamental domains can intersect $D$. On those not intersecting $D$, the dynamics is easy to study.  The following proposition {holds without the assumption that all fixed rays land} and is central to prove Proposition A. 
\begin{prop}[\bf Forced landing]\label{Forced landing}
 If a  fundamental domain  $F$ does not intersect $D$, it contains a unique fixed point $w$, which is repelling. Moreover the fixed ray $g_F$  {asymptotically contained in $F$} given by part $(c)$ in the Structural Lemma \ref{Technical} {is fully contained in $F$ and} lands at $w$. 
 \end{prop}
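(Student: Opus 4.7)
The plan is to realize the inverse branch $\psi := (f|_F)^{-1}$ as a strict hyperbolic contraction of $\Omega := \C\setminus(\overline{D}\cup\delta)$ with bounded orbits, apply the Denjoy--Wolff theorem to obtain a unique attracting fixed point, and identify it with the landing point of $g_F$. First observe that $F\subset\Omega$: by hypothesis $F\cap D=\emptyset$, and $F\cap\delta=\emptyset$ because $\delta\subset\C\setminus\overline{\mathcal{T}}$ by construction. Since $f|_F:F\to\Omega$ is a biholomorphism (by the definition of fundamental domains), so is its inverse $\psi:\Omega\to F$; composed with the inclusion $F\hookrightarrow\Omega$ it becomes a holomorphic self-map of $\Omega$ that is not an automorphism (as $F\subsetneq\Omega$), hence a strict contraction in the hyperbolic metric of $\Omega$ by Schwarz--Pick. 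To control orbits, part (b) of the Structural Lemma gives $f^{-1}(C_R)\cap F\subset D_R$ for $R$ large; using that $F\setminus\overline{D_R}$ is connected and $|f|\to\infty$ along paths to infinity in $F$, this upgrades to the implication ``$z\in F$ with $|z|\ge R$ implies $|f(z)|>R$'', and contrapositively $\psi$ sends $\Omega\cap\overline{D_R}$ into $F\cap D_R$, so every $\psi$-orbit lies in the bounded set $F\cap D_R$ after one iterate.

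Combining the strict hyperbolic contraction with bounded orbits, the Denjoy--Wolff theorem (transported to $\Omega$ via a Riemann map to $\D$) produces a unique $w\in\overline{\Omega}$ such that $\psi^n(z)\to w$ for every $z\in\Omega$, and by boundedness $w\in\overline{F\cap D_R}$. Meanwhile, part (c) of the Structural Lemma provides the fixed ray $g_F$: since all of its points have symbolic address constantly equal to $F$, the ray lies entirely in $F$, and by hypothesis it lands at some $z^*$, which is a fixed point of $f$. Because $\psi$ acts along $g_F$ as $g_F(t)\mapsto g_F(t/\lambda)$ with $\lambda>1$, the iterates along $g_F$ tend to $z^*$, whence $z^*=w$.

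The hard part is to rule out $w\in\partial F$. Since $\partial F\subset\partial T\cup f^{-1}(\delta)$ and $f(w)=w\not\in\delta$ (as $\overline{F}\cap\delta=\emptyset$), the only boundary possibility is $w\in\partial T$, which forces $w=f(w)\in\partial D$. Since the fixed points of $f$ form a discrete set and the Jordan curve $\partial D$ (together with the curve $\delta$) can be chosen in general position, we may assume $\partial D$ avoids all fixed points of $f$, excluding this degenerate case. Hence $w\in F$, and $w=\psi(w)\in\psi(\Omega)=F$ is an interior fixed point of the strict contraction $\psi$; Schwarz--Pick gives $|\psi'(w)|<1$, equivalently $|f'(w)|>1$, so $w$ is repelling for $f$. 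Uniqueness of $w$ in $F$ is immediate from the strict contraction property, and $g_F$ lands at $w$ by the previous paragraph.
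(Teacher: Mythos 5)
Your approach is the same as the paper's: realize $\psi=(f|_F)^{-1}$ as a holomorphic self-map of $\Omega=\C\setminus(\ov{D}\cup\delta)$ with image in $F\subsetneq\Omega$, use part (b) of the Structural Lemma to control orbits, and invoke a Schwarz-type argument for a unique interior fixed point; the paper simply restricts at once to the bounded domain $\Omega_R=D_R\setminus(\ov{D}\cup\delta)$ and applies the Schwarz Lemma there, whereas you keep $\Omega$ and invoke Denjoy--Wolff. Two details in your write-up need tightening.

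First, the sentence ``so every $\psi$-orbit lies in the bounded set $F\cap D_R$ after one iterate'' is not what you established: $\psi(\Omega\cap\ov{D_R})\subset F\cap D_R$ only bounds the orbit of a point that \emph{starts} in $\ov{D_R}$; for $z_0\in\Omega\setminus\ov{D_R}$ the first iterate $\psi(z_0)$ can lie in $F\setminus D_R$, and a priori the orbit need not enter $D_R$. This is harmless because Denjoy--Wolff only requires a single orbit that avoids a neighborhood of $\partial\Omega$ in order to place the limit point in the interior, and orbits starting in $\ov{D_R}$ (and staying in $\ov{F\cap D_R}$, which is compactly contained in $\Omega$) supply this; but the statement as written is false and should be phrased for such orbits only.

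Second, the paragraph you call ``the hard part'' is superfluous, and the fix you propose there is the one genuinely shaky step. Once the Denjoy--Wolff point is in the interior of $\Omega$, the identity $w=\psi(w)\in\psi(\Omega)=F$ already places $w$ in the \emph{open} set $F$; there is nothing to rule out on $\partial F$. Your alternative ``general position'' argument, perturbing $\partial D$ so that it avoids fixed points, is not innocuous: moving $D$ redefines the tracts and fundamental domains, so one would have to re-verify that the new $F$ still misses the new $D$, and this is exactly the kind of dependence the paper is careful not to introduce here. Drop that paragraph and conclude directly from $w\in\Omega$ and $\psi(\Omega)=F$.
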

\begin{proof}
As $f:F\ra \C\setminus (\ov{D}\cup\delta)$ is univalent, there is a univalent branch of $f^{-1}$ say $\psi: \C\setminus (\ov{D}\cup\delta) \to F$. Since points in $\partial F$ (where the boundary is taken in $\C$) are mapped to $\C\setminus \overline{F}$, it follows that $\psi(F) \subset F$. Hence $\psi$ maps the topological disk $F$ inside itself. Since it is not an automorphism, it follows from the Denjoy-Wolff Theorem (see  \cite[Theorem 5.4]{Mi}) that there exist  a point $z_0\in \overline{F}$ such that $\psi^n(z)\to z_0$ as $n\to \infty$ uniformly on compact subsets of $ F$. 

Let us reparametrize the piece of fixed ray which is contained in $F$ as $g_F:(-M,\infty) \to \C$ with $M\in\R$ so that $f(g_F(t))=g_F(t+1)$. Since $g_F(t)$ is asymptotically contained in $F$ it follows that it is entirely contained in $F$, as shown by successive iterations of $\psi$ on any compact arc of $g_F$ of the form $[g_F(t),g_F(t+1)]$. In particular $M=\infty$. By the same argument, it follows that $\lim_{t\to -\infty} g_F(t)=z_0$. It remains to prove that $z_0\neq \infty$, and therefore $z_0\in F$ must be a fixed point, unique and repelling for $f$. 

Suppose otherwise and consider  a circle $C_R$ of radius $R$ such that  part $(b)$ of the Structural Lemma \ref{Technical} holds for $F$ and such that $g_F$ intersects $C_R$. Since $|g_F(t)| \to \infty$ as $t\to -\infty$, there exists   $t_*\in \R$ such that $g(t_*) \in C_R\cap F$ and $|g_F(t)|>R$ for all $t< t_*$. But $f(g_F(t_*-1))=g_F(t_*)$   contradicting Lemma \ref{Technical} (b).
\end{proof}

\begin{rem} Observe that the ray $g_F$ does not necessarily land alone; other fixed rays coming from fundamental domains which do intersect the disk might land together with $g_F$ (see Figure \ref{B}). 
\end{rem}

\begin{figure}[hbt!]
\begin{center}
\def\svgwidth{0.4\textwidth}
\begingroup%
  \makeatletter%
  \providecommand\color[2][]{%
    \errmessage{(Inkscape) Color is used for the text in Inkscape, but the package 'color.sty' is not loaded}%
    \renewcommand\color[2][]{}%
  }%
  \providecommand\transparent[1]{%
    \renewcommand\transparent[1]{}%
  }%
  \providecommand\rotatebox[2]{#2}%
  \ifx\svgwidth\undefined%
    \setlength{\unitlength}{492bp}%
    \ifx\svgscale\undefined%
      \relax%
    \else%
      \setlength{\unitlength}{\unitlength * \real{\svgscale}}%
    \fi%
  \else%
    \setlength{\unitlength}{\svgwidth}%
  \fi%
  \global\let\svgwidth\undefined%
  \global\let\svgscale\undefined%
  \makeatother%
  \begin{picture}(1,0.9773498)%
    \put(0,0){\includegraphics[width=\unitlength]{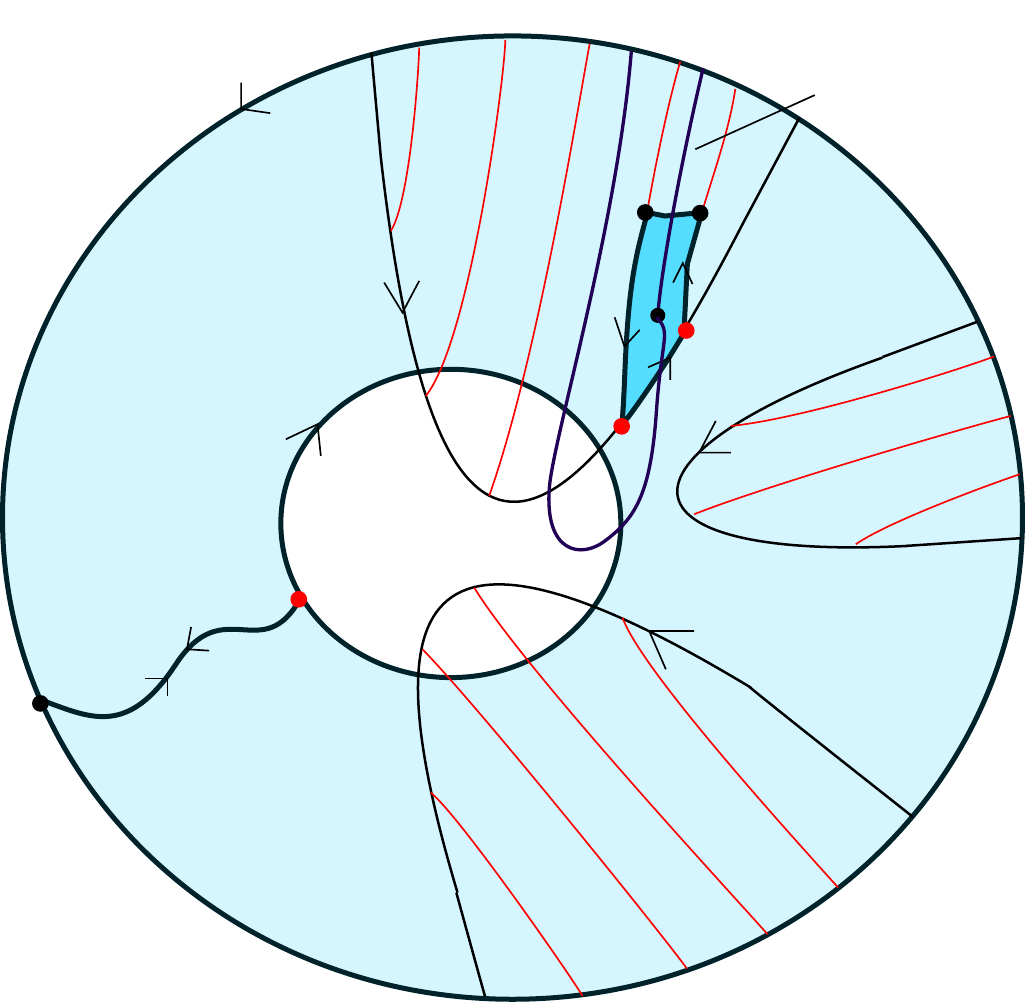}}%
    \put(0.34092141,0.51058995){\color[rgb]{0,0,0}\makebox(0,0)[lb]{\smash{$D$}}}%
    \put(0.66476966,0.92793415){\color[rgb]{0,0,0}\makebox(0,0)[lb]{\smash{$F$}}}%
    \put(0.80027099,0.88186368){\color[rgb]{0,0,0}\makebox(0,0)[lb]{\smash{\ss $g_F$}}}%
  \end{picture}%
\endgroup
\end{center}
\caption{\small If $F$ does not intersect $D$, the fixed ray $g_F$ lands at the unique fixed point in $F$ which must be repelling. However fixed rays from other fundamental domains which do intersect $D$, can also land at the same fixed point. There are only a finite number of those freelanders though. Light shaded we see the image of the dark shaded domain, {in the special case when $\psi(F\cap D_R) \subset D_R$.}}
\label{B}
\end{figure}

Recall that  a fixed point  is  an \emph{interior fixed point} if there are no fixed rays  landing at it.  Observe that interior fixed points cannot be in $\C\setminus(\ov{D}\cup \TT)$, because $f$ maps $\C\setminus(\ov{D}\cup \TT)$ to $\ov{D}$.
Also, they cannot be in a fundamental domain not intersecting the disk by Proposition~\ref{Forced landing}, {nor in $D\cap \TT$}.
So interior fixed points are contained either in $\ov{D}\setminus \TT$ or, {if they are outside $D$}, they lie in the finitely many fundamental domains which intersect $D$ (see Figure \ref{C}).

\begin{figure}[hbt!]
\begin{center}
\def\svgwidth{0.4\textwidth}
\begingroup%
  \makeatletter%
  \providecommand\color[2][]{%
    \errmessage{(Inkscape) Color is used for the text in Inkscape, but the package 'color.sty' is not loaded}%
    \renewcommand\color[2][]{}%
  }%
  \providecommand\transparent[1]{%
    \errmessage{(Inkscape) Transparency is used (non-zero) for the text in Inkscape, but the package 'transparent.sty' is not loaded}%
    \renewcommand\transparent[1]{}%
  }%
  \providecommand\rotatebox[2]{#2}%
  \ifx\svgwidth\undefined%
    \setlength{\unitlength}{453.975bp}%
    \ifx\svgscale\undefined%
      \relax%
    \else%
      \setlength{\unitlength}{\unitlength * \real{\svgscale}}%
    \fi%
  \else%
    \setlength{\unitlength}{\svgwidth}%
  \fi%
  \global\let\svgwidth\undefined%
  \global\let\svgscale\undefined%
  \makeatother%
  \begin{picture}(1,0.7799989)%
    \put(0,0){\includegraphics[width=\unitlength]{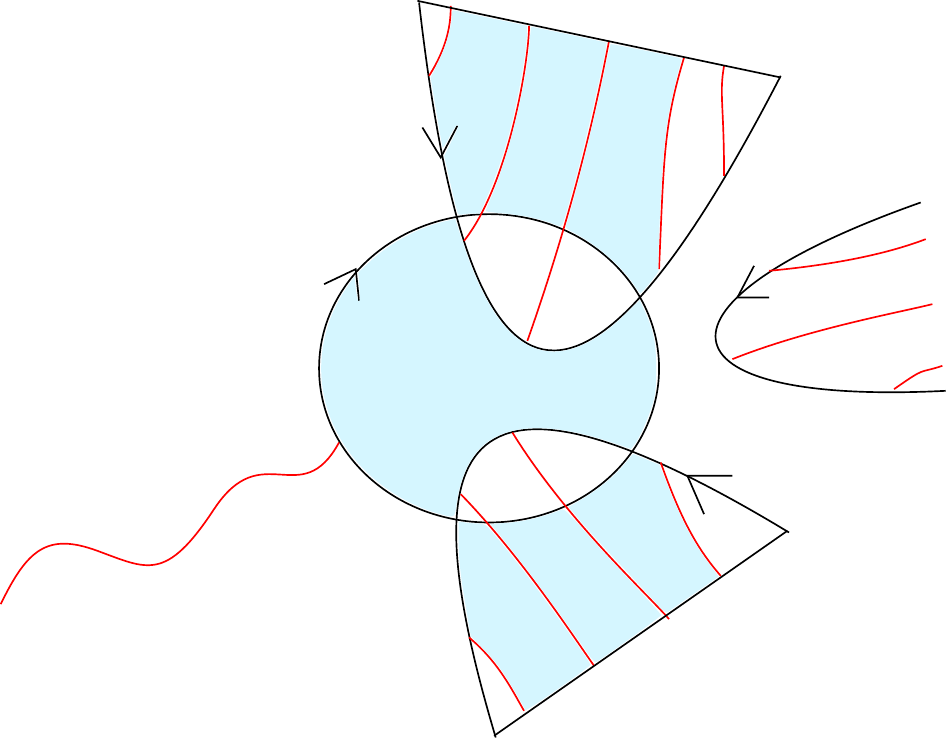}}%
    \put(0.40607963,0.4478446){\color[rgb]{0,0,0}\makebox(0,0)[lb]{\smash{$D$}}}%
    \put(0.03072856,0.22228152){\color[rgb]{0,0,0}\makebox(0,0)[lb]{\smash{$\delta$}}}%
  \end{picture}%
\endgroup%
\end{center}
\caption{\small Interior fixed points can only lie in the finite number of fundamental domains which intersect $D$, or in $\ov{D} \setminus\TT$.}\label{C}
\end{figure}
We now assume that all fixed rays land.  Recall that  $\Gamma$ is the graph formed by fixed rays together with their endpoints, and that the connected components of $\C\setminus \Gamma$ are called \emph{basic regions} for $f$. The following is a corollary of Proposition \ref{Forced landing}.
\begin{cor}
There are finitely many basic regions for fixed rays.
\end{cor}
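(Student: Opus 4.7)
The plan is to leverage Proposition~\ref{Forced landing} to decompose $\Gamma$ into a finite ``cluster'' subgraph plus a collection of pendant arcs, and then to show that the pendant arcs do not further subdivide any basic region.

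I would first let $\FF_D$ denote the finite collection of fundamental domains meeting $D$, finite because tracts accumulate only at infinity. For each $F \notin \FF_D$, Proposition~\ref{Forced landing} produces the unique (repelling) fixed point $w_F \in F$ and shows $g_F$ lands at $w_F$. I would then verify that the landing points carrying more than one fixed ray are finite in number: any fixed ray $g'$ landing at some $w_F$ and asymptotically contained in a different fundamental domain $F'$ must have $F' \in \FF_D$, because otherwise $g'$ would itself land at its own unique fixed point $w_{F'} \in F' \neq F$ by Proposition~\ref{Forced landing}, a contradiction. So the subgraph $\Gamma^* \subset \Gamma$ consisting of all rays from $\FF_D$ together with all rays paired with them (and their landing points) is finite, and $\C \setminus \Gamma^*$ has only finitely many connected components $R_1, \dots, R_N$.

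The remaining rays in $\Gamma \setminus \Gamma^*$ are of the form $g_F \cup \{w_F\}$ with $F \notin \FF_D$ and $w_F$ landing alone; they are pairwise disjoint (they live in pairwise disjoint fundamental domains) and each is disjoint from $\Gamma^*$. So each such arc lies entirely inside a single $R_i$, with interior endpoint $w_F$ in the interior of $R_i$ and the other end approaching $\infty$. A standard planar topology fact says that removing an arc from an open connected set that has one endpoint on the boundary and one in the interior does not disconnect it, because a detour around the interior endpoint is possible. Applying this to each pendant arc—and using that fundamental domains accumulate only at infinity, so any compact path meets only finitely many of them—one obtains that $R_i \setminus \bigcup_F (g_F \cup \{w_F\})$ is connected, hence $\C \setminus \Gamma$ has exactly $N$ components.

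The main subtlety is the topological step: simultaneous removal of infinitely many pendant arcs. This is handled by the geometric facts that the arcs lie in pairwise disjoint fundamental domains whose boundaries are disjoint from $\Gamma$, so each detour around a $w_F$ can be performed locally inside its own $F$ without interfering with detours in other fundamental domains; combined with the local finiteness of fundamental domains on any compact set, this ensures that the limiting behaviour at infinity does not create new separation beyond what $\Gamma^*$ already provides.
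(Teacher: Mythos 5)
Your argument is correct and follows the paper's approach through Proposition~\ref{Forced landing}, but makes explicit a topological step that the paper leaves implicit. The paper's proof observes only that all but finitely many fixed rays land alone at repelling fixed points interior to their own fundamental domains, hence only finitely many rays can participate in ray pairs, and then simply asserts the finiteness of basic regions. Your decomposition of $\Gamma$ into the finite subgraph $\Gamma^*$ (rays from $\FF_D$ together with their pair partners) plus a family of pendant arcs, and your verification that removing the pendant arcs does not further subdivide any component of $\C\setminus\Gamma^*$, supplies exactly that missing detail. Your observation that two rays from fundamental domains outside $\FF_D$ cannot be paired --- since each must land at its own $w_{F'}\in F'$ --- is the point that makes $\Gamma^*$ finite. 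On the local-finiteness subtlety you flag at the end: it is worth noting that the contraction argument in Proposition~\ref{Forced landing} in fact forces each pendant ray $g_F$ to lie \emph{entirely} inside $F$, not merely asymptotically, because if $g_F$ crossed $\partial F$ at some parameter then, by invariance, a later point of $g_F$ (which already lies in $F$) would land on $\partial D$ or on $\delta$, both impossible. This gives the pendant arcs living in pairwise disjoint open sets that do not accumulate in $\C$, which is precisely what your detour argument needs and makes it fully rigorous.
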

\begin{proof}
As tracts do not accumulate in any compact set,  there are only finitely many tracts intersecting $D$. As preimages of  $\delta$ do not accumulate in any compact set either, there are finitely many fundamental domains intersecting $D$, hence by Proposition \ref{Forced landing} all but finitely many fixed rays must land at repelling fixed points in the interior of their respective fundamental domains. Therefore, only the finitely many remaining ones are free to land together with other rays. 
\end{proof}

This concludes the proof of Proposition A. The next two lemmas close this section.

\begin{lem}[\bf Crossing tracts]\label{Crossing tracts}
A  basic region cannot be fully contained inside a tract.
\end{lem}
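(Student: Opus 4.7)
The plan is to argue by contradiction: suppose $B\subset T$ is a basic region entirely contained in a tract $T$, and derive a contradiction by exhibiting a point of $\partial B$ that does not lie in $\Gamma$, contrary to the fact that $\partial B\subset\Gamma$.

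First I would observe that every basic region is unbounded in $\C$. This follows because $\Gamma\cup\{\infty\}$ is a connected graph in $\hat\C$ in which every edge (a fixed ray) has $\infty$ as one of its endpoints, so every closed cycle passes through $\infty$ and hence every face has $\infty$ on its boundary. Since $B\subset T$ is unbounded, I can select $z_n\in B$ with $|z_n|\to\infty$. Using Proposition~\ref{Forced landing} together with the observation that the (finitely many) fixed rays landing in or near $D$ can wander outside their asymptotic fundamental domain only along bounded arcs confined to a compact subset of $\C$, for $n$ large enough $z_n$ belongs to a fundamental domain $F=F_{\alpha,i_n}$ with $F\cap D=\emptyset$ and $\Gamma\cap F=g_F\cup\{w_F\}$. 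Inside $F$ the set $\Gamma$ therefore reduces to the simple arc $g_F\cup\{w_F\}$ joining the interior point $w_F$ of $F$ to the boundary point $\infty$ of $\overline F^{\hat\C}$.

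Since removing a radial slit from a disk does not disconnect it, $F\setminus\Gamma$ is then connected; combined with $z_n\in B\cap(F\setminus\Gamma)$ this yields $F\setminus\Gamma\subset B$. Now $\partial F$ contains an arc of $\partial T$ (the piece of $\partial T$ joining the starting points of the two preimages of $\delta$ bounding $F$ in $T$), while on $\partial T$ one has $f(z)\in\partial D$ and on every fixed ray $|f^k(z)|\to\infty$; so $\Gamma\cap\partial T$ is at most a discrete set, and there exist points $q\in(\partial F\cap\partial T)\setminus\Gamma$. Any such $q$ lies in $\overline{F\setminus\Gamma}\subset\overline B$ but, since $q\in\partial T$ and $B\subset T$, not in $B$, so $q\in\partial B$. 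This is the desired contradiction because $q\notin\Gamma$.

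The step I expect to be the main obstacle is selecting a fundamental domain $F$ in which $\Gamma\cap F$ reduces to just $g_F\cup\{w_F\}$. The concern is that a wandering fixed ray could traverse $F$ as a chord which, together with $g_F$, isolates the component of $F\setminus\Gamma$ containing $z_n$ from the piece $\partial F\cap\partial T$ used in the final contradiction. This requires carefully choosing $|z_n|$ large enough that $F$ avoids the compact region in which all such wandering arcs live, using that fundamental domains accumulate only at infinity.
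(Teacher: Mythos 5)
Your approach is genuinely different from the paper's. The paper observes that if a basic region $V\subset T_\alpha$, then two fixed rays $g^1,g^2$ land together at a point $z$ and are contained in $T_\alpha$; lifting to the unique fixed preimage $z^*$ of $z$ in logarithmic coordinates produces two fixed ray tails of $\ft$ in the same tract $\Tt_{\alpha,k}$, hence with the same address $\binom{\alpha}{k}\binom{\alpha}{k}\cdots$, contradicting the uniqueness in Theorem~\ref{Transcendental rays}. Your argument instead tries to exhibit a boundary point of $B$ on $\partial T$ that is not in $\Gamma$. The general structure ($B$ unbounded; far-out $F$ has $F\setminus\Gamma\subset B$; $\partial F\cap\partial T\subset\partial B$; find $q\in(\partial F\cap\partial T)\setminus\Gamma$) is sound, but as written it has a genuine gap.

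The gap is the assertion that $\Gamma\cap\partial T$ is at most discrete. The justification you give --- that on $\partial T$ one has $f(z)\in\partial D$, while on a fixed ray $|f^k(z)|\to\infty$ --- is not a contradiction: $\partial D$ may well contain escaping points, even arcs of escaping points, and nothing in the paper rules out a fixed ray running along an arc of $\partial T$. So the final step, as stated, does not go through. The step you yourself flag as the ``main obstacle'' is also only asserted, not proved: it rests on the unstated claim that for $F'\cap D=\emptyset$ the ray $g_{F'}$ is contained in the \emph{open} set $F'$, not merely asymptotically. This is true, but needs an argument extracted from the proof of Proposition~\ref{Forced landing}: the maximal unbounded arc $A_s$ in $J_s(f)$ (with $s$ the constant address of $F'$) satisfies $\psi(A_s)\supset A_s$ and, by maximality, $\psi(A_s)=A_s$; since $\psi^n\to w$ uniformly on compacts of $F'$, the arcs $\psi^n(A_s)$ accumulate on the landing point $w$ and $A_s$ must be the whole ray, giving $g_{F'}=A_s\subset J_s(f)\subset F'$. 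Once you have this, the proof can be repaired and simultaneously simplified: only the finitely many rays $g_{F'}$ with $F'\cap D\neq\emptyset$ can leave their fundamental domain, and their wandering parts form a bounded set, so for $F$ far enough out $\Gamma$ is disjoint from $\ov{F}\setminus(g_F\cup\{w_F\})$ and in particular from the whole arc $\partial F\cap\partial T$. Then \emph{every} point of that arc works as $q$, and the unjustified discreteness claim becomes unnecessary. In short: the idea is viable, but the key lemma you need ($g_{F'}\subset F'$ when $F'\cap D=\emptyset$) is missing, and the discreteness argument you use in its place is incorrect; the paper's logarithmic-coordinate argument avoids all of this in a few lines.
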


\begin{proof}
This proof uses notation from Subsections~\ref{Logarithmic coordinates} and \ref{raysandprop}.
If a basic region is fully contained inside some tract $T_\alpha$, there exist at least two fixed rays $g^1$ and $g^2$ landing at  a common fixed point $z$ which are completely contained in $T_\alpha$. Let $\{z^k\}$ be the fiber of $z$ under the exponential map, with the convention that $z^k\in \St_k$. Because $z$ is fixed, {and because $\ft$ has been chosen to be $2\pi i -$periodic, } there is a unique $z^*\in \{z^k\}$ which is fixed. Let $\tilde{g}^1$ and  $\tilde{g}^2$ be the {unique} lifts of $g^1, g^2$ {which are fixed, and note that they are forced to}  land at the unique fixed point. This gives two fixed ray tails for $\ft$ which are contained in the same tract, {that is two unbounded arcs in the escaping set with the same address}, contradicting the  unicity in Theorem~\ref{Transcendental rays}.
\end{proof}

\begin{lem}
Every fixed ray pair separates the set of singular values.
\end{lem}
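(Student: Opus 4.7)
I argue by contradiction. Suppose $\gamma = g_1 \cup \{z\} \cup g_2$ is a fixed ray pair whose complement $\C\setminus\gamma = U_1 \sqcup U_2$ satisfies $S(f) \cap U_2 = \emptyset$. Since $\gamma$ is a Jordan arc from $\infty$ to $\infty$ in $\widehat{\C}$, the region $U_2$ is simply connected, and together with the absence of singular values in $U_2$, the monodromy theorem permits the global, univalent analytic continuation over $U_2$ of any local branch of $f^{-1}$.

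By the Structural Lemma~\ref{Technical}(c), each $g_i$ is asymptotically contained in a unique fundamental domain $F_i$, and $F_1 \neq F_2$ since distinct fixed rays belong to distinct fundamental domains. Let $\psi_i := (f|_{F_i})^{-1}$ denote the corresponding univalent inverse branch defined on $\C\setminus(\overline{D}\cup\delta)$. Using the flexibility in the choice of $D$ and $\delta$ (cf.\ Lemma~\ref{Existence of delta}), I arrange $\overline{D}\cup\delta \subset \overline{U_1}$, so that $U_2 \subset \C\setminus(\overline{D}\cup\delta)$ and both $\psi_1, \psi_2$ restrict to well-defined univalent holomorphic maps on $U_2$, with disjoint images $\psi_1(U_2) \subset F_1$ and $\psi_2(U_2) \subset F_2$.

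Next, I introduce the germ $\psi^*$ at $z$ of the unique local inverse branch of $f$ fixing $z$, which is well-defined because $z \in J(f)$ forces $f'(z) \neq 0$ (no super-attracting fixed point lies in $J(f)$). The fixed-ray identity $f(g_i(t)) = g_i(\mu_i t)$ for some $\mu_i > 1$ yields $\psi^*(g_i(t)) = g_i(t/\mu_i)$ near $z$. Analytically continuing $\psi^*$ over $U_2$ gives a univalent holomorphic map $\tilde\psi^*:U_2\to\C$, and by propagating the identity along $g_i$, the continuous boundary extension of $\tilde\psi^*$ satisfies $\tilde\psi^*(g_i(t)) = g_i(t/\mu_i)$ for all $t>0$. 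On the tail of $g_i$, where $g_i(t/\mu_i)\in F_i$, this coincides with $\psi_i(g_i(t)) = (f|_{F_i})^{-1}(g_i(t)) = g_i(t/\mu_i)$. Since $\tilde\psi^*$ and $\psi_i$ are two holomorphic functions on $U_2$ with identical continuous boundary values on the real-analytic arc $g_i$, Schwarz reflection across $g_i$ yields $\tilde\psi^* \equiv \psi_i$ on $U_2$. Applying this for both $i=1$ and $i=2$ forces $\psi_1 \equiv \psi_2$ on $U_2$, which is impossible since their images lie in the disjoint fundamental domains $F_1$ and $F_2$. This contradiction completes the proof.

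The main obstacle is establishing the boundary extension and matching cleanly: one must confirm that $\tilde\psi^*$ extends continuously to $g_i$ with the prescribed boundary values, and that the agreement on the boundary arc indeed forces equality on all of $U_2$. Both rely on the real-analyticity of the fixed-ray arc $g_i$ (which follows from the analyticity of $f$ and the fact that the ray is an invariant curve under a nondegenerate map), together with the compatibility of the local germ $\psi^*$ with the global branches $\psi_i$ along curves in $U_2$ hugging $g_i$. A subsidiary technical point is verifying that $D$ and $\delta$ can be arranged to lie in $\overline{U_1}$; this typically requires relaxing or suitably adjusting the auxiliary conditions $0, f(0) \in D$ imposed in the original setup purely for convenience.
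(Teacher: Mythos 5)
Your proof takes a genuinely different route from the paper's. The paper's argument is short and uses the logarithmic-coordinate machinery: since $S(f)\subset U_1$, redefine $D$ (and $\delta$) so that $D\cap\gamma=\emptyset$; then $\gamma\subset\C\setminus\overline{D}$, and because the ray pair is fixed, $\gamma\subset f^{-1}(\gamma)\subset\TT$, so $\gamma$ lies entirely inside a single tract. But the proof of Lemma~\ref{Crossing tracts} shows precisely that two fixed rays landing at a common fixed point cannot both lie in one tract (their lifts would give two fixed ray tails for $\ft$ in the same tract, contradicting Theorem~\ref{Transcendental rays}). Your argument instead works directly in the plane: you continue the germ $\psi^*$ of $f^{-1}$ fixing $z$ over the singular-value-free simply connected region $U_2$ and force the two distinct univalent branches $\psi_1,\psi_2$ to coincide, which contradicts $F_1\cap F_2=\emptyset$. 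This is a clean monodromy-style idea that bypasses logarithmic coordinates altogether.

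There is, however, a genuine gap in the matching step. You invoke Schwarz reflection across $g_i$, which requires $g_i$ to be a real-analytic arc. That is standard for polynomial external rays (via B\"ottcher coordinates) but is not established — and I do not believe is known — for dynamic rays of maps in $\BBt$: the rays of \cite{R3S} are produced by a hyperbolic-metric contraction argument and come only as continuous injective curves. Moreover, the holomorphic functions $\tilde\psi^*$ and $\psi_i$ are unbounded on $U_2$, so even the softer boundary-uniqueness theorems need extra care here. The step can be repaired without any regularity assumption on the rays by replacing reflection with path continuation: continue $\psi^*$ along a path in $(U_2\cup g_i)\setminus\{z\}\subset\C\setminus S(f)$ hugging $g_i$ from $z$ out to the tail; by continuity the continuation is at each stage the branch of $f^{-1}$ sending $g_i$ into itself, and at the tail (which lies in $F_i$) this branch is $\psi_i$; the identity principle for inverse branches on the connected set $U_2$ then gives $\tilde\psi^*\equiv\psi_i$. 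Two further minor points: the multiplicative form $f(g_i(t))=g_i(\mu_i t)$ is specific to polynomials — for $f\in\BBt$ one only has $f(g_i(t))=g_i(\tau_i(t))$ with $\tau_i$ an increasing homeomorphism satisfying $\tau_i(t)>t$, which suffices — and the arrangement $\overline{D}\cup\delta\subset\overline{U_1}$, which you flag yourself, is essentially the same preliminary adjustment the paper makes when it redefines $D$ to avoid the ray pair.
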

\begin{proof}
If a ray pair does not separate the set of singular values, the disk $D$ can be redefined so  as to  not intersect the ray pair. Then, the original ray pair is contained in $\C\setminus \Dbar$, hence its preimages (which includes the rays themselves) are fully contained inside tracts, contradicting Lemma~\ref{Crossing tracts}.
\end{proof}


\section{Tools: index and homotopies}\label{Index}

To count the number of fixed points of $f$ inside a basic region we will use the Argument Principle. The following are preliminary concepts about index of arcs and its invariance under homotopy. For a general theory about index see for example \cite{Why}.

In this section a {\em curve}  $\gamma$ is a continuous map $\gamma:[a,b]\ra\C$, with $a,b\in\R$, $a\leq b$. A curve is closed if $\gamma(a)=\gamma(b)$. An {\em arc} is a non-closed curve.  Abusing notation, the symbol $\gamma$ may denote both the function $\gamma(t)$ or the set $\gamma[a,b]$.  If $\gamma$ is closed and injective it is called a  {\em Jordan curve}.   For a curve $\gamma$ as above and a point $P\in\C\setminus\gamma$   there exists a continuous branch  $u(t)$ of the argument of $u(t)-P$. The {\em index} of  $\gamma$ with respect to  $P$  is then defined as 
$$
\ind(\gamma,P):= \frac{1}{2\pi}(u(b)-u(a)),
$$
a real number independent of the chosen branch of the argument. If $\gamma$ is rectifiable then the index can be expressed as the integral 
\[
 \ind(\gamma,P)=\Re\frac{1}{2\pi i}\underset{\gamma}\int\frac{1}{z-P}dz.  
 \]
By definition, if $\gamma$ is the union of consecutive arcs $\gamma_i$, then
$
\ind(\gamma,P)=\underset{i}\sum\ind(\gamma_i,P).
$ 

If  $\gamma$ is closed, $\ind(\gamma,P)$ is the winding number of $\gamma$ with respect to $P$ and it  is an integer. In this case it can be used to count the number of zeros of a holomorphic map inside a Jordan curve. 

\begin{thm}[\bf Argument principle]
 Let $\gamma$ be a Jordan curve bounding a region $\Omega$. Let $f$ be holomorphic in a neighborhood of $\overline{\Omega}$  such that $f(z)\neq 0$ for all points $z\in \gamma$. Let $Z(f)$ be the set of zeros of $f$. Then 
\[ 
\ind(f(\gamma),0) = \# (Z(f) \cap \Omega)
\]
counted with multiplicity.
\end{thm}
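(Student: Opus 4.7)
The plan is to reduce $f$ to an explicit factorization near its zeros and then exploit additivity of the index under products of non-vanishing curves. First, since $f$ is holomorphic on a neighborhood of the compact set $\overline{\Omega}$ and its zeros are isolated, the set $Z(f)\cap\overline{\Omega}$ is finite; by the hypothesis that $f$ does not vanish on $\gamma$, all these zeros $z_1,\dots,z_n$ lie in $\Omega$. Let $m_j$ be the multiplicity of $z_j$ and factor
$$f(z)=\prod_{j=1}^{n}(z-z_j)^{m_j}\cdot g(z),$$
where $g$ is holomorphic and nowhere zero on the same neighborhood of $\overline{\Omega}$ (the apparent singularities at each $z_j$ are removable, and by construction $g$ absorbs no further zeros).

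Next, observe that the curve $f\circ\gamma$ is the pointwise product of the non-vanishing curves $(\gamma(t)-z_j)^{m_j}$, $j=1,\dots,n$, and $g\circ\gamma$. A continuous branch of the argument of a product of non-vanishing curves is the sum of continuous branches of the arguments of the factors, so from the definition of the index we read off
$$\ind(f(\gamma),0)=\sum_{j=1}^{n}m_j\,\ind(\gamma-z_j,0)+\ind(g(\gamma),0)=\sum_{j=1}^{n}m_j\,\ind(\gamma,z_j)+\ind(g(\gamma),0).$$
Since $\gamma$ is a positively oriented Jordan curve bounding $\Omega$ and each $z_j\in\Omega$, each $\ind(\gamma,z_j)$ equals $1$. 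It therefore remains to prove that $\ind(g(\gamma),0)=0$.

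For this, the key point is that $\Omega$ is simply connected (by the Jordan curve theorem), so $\gamma$ is null-homotopic in $\overline{\Omega}$ through a continuous family $\gamma_s$ of loops. Because $g$ is continuous and nowhere vanishing on a neighborhood of $\overline{\Omega}$, the composed family $g\circ\gamma_s$ is a null-homotopy of $g\circ\gamma$ inside $\C\setminus\{0\}$; invoking the standard homotopy invariance of the index among closed curves avoiding the base point (as developed e.g.\ in Whyburn's general theory referenced above) we conclude $\ind(g(\gamma),0)=0$. Substituting back yields $\ind(f(\gamma),0)=\sum_{j=1}^{n}m_j=\#(Z(f)\cap\Omega)$ counted with multiplicity.

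The only conceptual obstacle is the vanishing of $\ind(g(\gamma),0)$, which is where the topological content of $\gamma$ being a Jordan curve (and hence of $\overline{\Omega}$ being simply connected) is essentially used; once homotopy invariance of the index is granted, the rest of the argument is purely formal manipulation with the definition and the additivity of the argument over products of non-vanishing curves.
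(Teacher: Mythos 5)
Your proof is correct, but it takes a genuinely different route from what the paper does. The paper does not actually prove the Argument Principle: it states it as a known theorem and adds only a remark explaining how to reduce the case of a general (possibly non-rectifiable) Jordan curve to the classical rectifiable case, by approximating $\gamma$ with a piecewise-linear Jordan curve enclosing the same zeros and producing an image curve with the same index, citing Whyburn for this approximation. You instead give a direct, self-contained proof that never invokes the integral formula at all: you factor $f(z)=\prod_j(z-z_j)^{m_j}\,g(z)$ with $g$ non-vanishing, use additivity of the continuous-argument index over pointwise products of non-vanishing curves, observe $\ind(\gamma,z_j)=1$ for interior points of a positively oriented Jordan curve, and kill $\ind(g(\gamma),0)$ by homotopy invariance of the index together with the null-homotopy of $\gamma$ inside $\overline{\Omega}$. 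This buys you a proof that is valid directly for an arbitrary (continuous) Jordan curve with the paper's definition of index, bypassing the polygonal approximation and the integral representation entirely. The only soft spot is that null-homotopy of $\gamma$ in $\overline{\Omega}$ requires a bit more than the Jordan curve theorem as stated (it is the Schoenflies strengthening, or equivalently that $\overline{\Omega}$ is homeomorphic to a closed disc); alternatively, one can observe that a non-vanishing holomorphic $g$ on a simply connected neighborhood of $\overline{\Omega}$ admits a continuous logarithm, which gives $\ind(g(\gamma),0)=0$ immediately and avoids that appeal altogether.
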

\begin{rem}
The Argument principle is most often stated for rectifiable curves to be able to use the integral expression of the index. But every Jordan curve $\gamma$ can be approximated by a piecewise linear Jordan curve which bounds the same zeros of $f$ and such that its image has the same index as $f(\gamma)$ {(see e.g. \cite[Sect V.3]{Why}).}
\end{rem} 

Given two curves  $\gamma, \sigma:[a,b]\to \C$, we denote by $\sigma-\gamma:[a,b]\to \C$ the curve defined by   $\sigma(t)-\gamma(t)$.
The following is a corollary of the Argument Principle, and will be our main tool.

\begin{cor}[\bf Counting fixed points]
Let $\gamma$ be a Jordan curve bounding a region $\Omega$. Let $f$ be holomorphic in a neighborhood of $\overline{\Omega}$  such that $f(z)\neq z$ for all points $z\in \gamma$. Let ${\rm Fixed}(f)$ be the set of fixed points of $f$. Then 
\[ 
\ind(f(\gamma)-\gamma,0) = \# (\rm{Fixed}(f) \cap \Omega)
\]
counted with multiplicity.
\end{cor}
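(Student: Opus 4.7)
The plan is to apply the Argument Principle to the auxiliary holomorphic function $g(z):=f(z)-z$, which is holomorphic on a neighborhood of $\overline{\Omega}$ because $f$ is. I would start by noting two elementary correspondences that make the reduction work: (i) $g(z_0)=0$ if and only if $f(z_0)=z_0$, so $Z(g)\cap\Omega$ is literally the set $\mathrm{Fixed}(f)\cap\Omega$; and (ii) the hypothesis $f(z)\neq z$ on $\gamma$ says exactly that $g$ has no zeros on $\gamma$. Together, (i) and (ii) mean that the Argument Principle applies to $g$ along $\gamma$ and yields
\[
\ind(g(\gamma),0)=\#(Z(g)\cap\Omega),
\]
with the zeros counted with multiplicity.

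Next I would verify that the multiplicity of $z_0$ as a zero of $g$ matches the multiplicity of $z_0$ as a fixed point of $f$ in the sense used elsewhere in the paper. Using the local expansion recalled before the Main Theorem, a fixed point $z_0$ of multiplicity $m+1$ satisfies $f(z)=z+a(z-z_0)^{m+1}+\mathcal{O}((z-z_0)^{m+2})$ with $a\neq 0$, so $g(z)=a(z-z_0)^{m+1}+\mathcal{O}((z-z_0)^{m+2})$ has a zero of order exactly $m+1$ at $z_0$. Hence the counts with multiplicity on the two sides of the claimed identity agree.

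Finally, the curve $g(\gamma)$ is by construction the curve $t\mapsto f(\gamma(t))-\gamma(t)$, which is the curve denoted $f(\gamma)-\gamma$ in the paragraph introducing that notation just before the statement. Substituting this identification of curves into the displayed identity produces
\[
\ind(f(\gamma)-\gamma,0)=\#(\mathrm{Fixed}(f)\cap\Omega),
\]
as required. There is no real obstacle in this proof: the only point that deserves to be stated explicitly is the identification of the multiplicity of a fixed point with the order of the corresponding zero of $f(z)-z$, which is immediate from the Taylor expansion. One might also briefly remark, for cleanliness, that $g(\gamma)$ is itself a Jordan-type continuous closed curve in $\mathbb{C}\setminus\{0\}$, so that the index on the left-hand side is a well-defined integer, consistent with the integer-valued right-hand side.
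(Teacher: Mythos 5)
Your proof is correct and is exactly the argument the paper intends: the corollary is stated as an immediate consequence of the Argument Principle applied to $g(z)=f(z)-z$, with the identification of the multiplicity of a fixed point with the order of the corresponding zero, and the paper gives no further proof. The only trivial imprecision is your aside that $g(\gamma)$ is a ``Jordan-type'' curve --- it is merely a closed curve in $\C\setminus\{0\}$ (possibly with self-intersections), which is all that is needed for the index to be a well-defined integer.
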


In the remaining sections, we shall apply this corollary  repeatedly, and we will do it piecewise, i.e. breaking the Jordan curve in a finite number of arcs and computing the contribution of each piece to the total index. Hence given two curves $\gamma(t)$ and $\sigma(t):=f(\gamma(t))$, not necessarily closed, we will need to compute the index of the {\em subtraction curve} $\Gamma(t):=\sigma(t) -\gamma(t)$ with respect to 0, in a variety of different situations. This is a more general problem in which the role of $f$ is substituted by the relation between the respective parametrizations of $\sigma$ and  $\gamma$. 
In what follows, we collect some lemmas which give this index in terms of the relative positions and parametrizations of $\sigma$ and $\gamma$. They are based on the invariance of the index under homotopies satisfying certain conditions. 
We first recall the definition of (relative) homotopy.
\begin{defn}[\bf (Relative) homotopies]
Let $\gamma, \gammah:[a,b] \to X\subset \C$ be two curves. We say that $\gamma$ and $\gammah$ are {\em homotopic in $X$} if there exists a continuous map 
\[
\begin{array}{rccl}
H:& [a,b]\times [0,1] &\longrightarrow &X \\
 & (t,s) & \longmapsto & H(t,s)=:\gamma_s(t)
 \end{array}
 \]
such that $\gamma_0(t)=\gamma(t)$ and $\gamma_1(t)=\gammah(t)$ for all $t\in[a,b]$. In particular, a curve is {\em homotopic in $X$ to a point $P_0$} if it can be continuously deformed {\em in $X$} to the constant curve equal to $P_0$. 
{We say that $\gamma$ and $\gammah$ are {\em homotopic  in $X$ relative to $T\subset [a,b]$} if for each $t\in T$, $\gamma_s(t)$ is constant for all $s\in [0,1]$. We then write $\gamma \sim \gammah$ rel $T$ in $X$. }

\end{defn}
\begin{lem}[\bf First Homotopy Lemma]\label{First Homotopy Lemma} \ \ 
\begin{itemize}
\item[\rm (a)]
Fix $P\in \C$ and  let $\gamma,\widehat{\gamma}:[0,1]\ra\C\setminus \{P\}$ be two curves homotopic in $\C\setminus \{P\}$, relative to  $t_0=0$ and $t_1=1$. 
Then
\[
\ind(\gamma,P)=\ind(\widehat{\gamma},P).
\]
\item[\rm (b)]
Let $\gamma,\widehat{\gamma},\sigma,\widehat{\sigma}:[0,1]\ra\C$ be curves such that $\gamma \sim \gammah$  rel $\{0,1\}$ and $\sigma \sim \sigmah$ rel  $\{0,1\}$, both homotopies  in $\C$. Assume further that $\sigma_s(t)-\gamma_s(t) \neq 0$ for all $s,t\in [0,1]$. Then 
\[
\ind(\sigma-\gamma,0)=\ind(\sigmah-\gammah,0).
\]\end{itemize}
\end{lem}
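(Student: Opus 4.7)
For part (a), the plan is to use the standard lifting argument for the universal cover of $\C\setminus\{P\}$. Let $H:[0,1]\times[0,1]\to \C\setminus\{P\}$ be the relative homotopy with $H(t,0)=\gamma(t)$ and $H(t,1)=\widehat{\gamma}(t)$. Since $H$ avoids $P$, the map $(t,s)\mapsto H(t,s)-P$ lands in $\C^{*}$, and by the homotopy lifting property applied to the covering $\exp:\C\to\C^{*}$ (equivalently, by a direct construction using a partition of $[0,1]^{2}$ into small squares on each of which a branch of $\arg$ is defined), we can choose a continuous function $U:[0,1]\times[0,1]\to\R$ such that $U(t,s)$ is a branch of $\arg(H(t,s)-P)$. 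Now $U(0,s)$ and $U(1,s)$ are continuous in $s$ and take values in the discrete sets $\arg(P_{0}-P)+2\pi\Z$ and $\arg(P_{1}-P)+2\pi\Z$ respectively (since the homotopy fixes $P_{0}$ and $P_{1}$), so both are constant in $s$. Hence $\ind(\gamma_{s},P)=\frac{1}{2\pi}(U(1,s)-U(0,s))$ is constant in $s$, giving $\ind(\gamma,P)=\ind(\widehat{\gamma},P)$.

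For part (b), the idea is to reduce to (a) by subtracting the two homotopies. Let $H_{\gamma},H_{\sigma}:[0,1]\times[0,1]\to \C$ be the relative homotopies from $\gamma$ to $\widehat{\gamma}$ and from $\sigma$ to $\widehat{\sigma}$, respectively, and define
\[
K(t,s):=H_{\sigma}(t,s)-H_{\gamma}(t,s).
\]
This $K$ is continuous, and the hypothesis $\sigma_{s}(t)-\gamma_{s}(t)\neq 0$ says precisely that $K$ takes values in $\C\setminus\{0\}$. Moreover, because $H_{\gamma}$ is relative to $\{\gamma(0),\gamma(1)\}$ and $H_{\sigma}$ is relative to $\{\sigma(0),\sigma(1)\}$, the values $K(0,s)=\sigma(0)-\gamma(0)$ and $K(1,s)=\sigma(1)-\gamma(1)$ are independent of $s$. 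Thus $K$ is a homotopy in $\C\setminus\{0\}$, relative to its endpoints, between the curves $t\mapsto \sigma(t)-\gamma(t)$ and $t\mapsto \widehat{\sigma}(t)-\widehat{\gamma}(t)$. Applying part (a) with $P=0$ to these two curves yields $\ind(\sigma-\gamma,0)=\ind(\widehat{\sigma}-\widehat{\gamma},0)$.

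The main (minor) obstacle is part (a), and specifically the construction of the continuous lift $U$ of the argument over the two-parameter family. This is routine once one invokes covering-space theory for $\exp:\C\to\C^{*}$, but if one prefers a hands-on argument it requires subdividing $[0,1]^{2}$ finely enough that the image of each subrectangle under $H$ is contained in a slit plane where a single-valued branch of the argument exists, and then patching the branches together; either approach is standard. Once $U$ is in hand, the rest is the observation that a continuous function into a discrete set must be constant, which immediately gives the invariance of the index.
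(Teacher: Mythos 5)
Your proof is correct, and for part (b) it uses precisely the reduction the paper indicates (subtracting the two homotopies to obtain a homotopy of $\sigma-\gamma$ to $\widehat\sigma-\widehat\gamma$ in $\C\setminus\{0\}$ relative to the common endpoints, then applying part (a)); for part (a), which the paper leaves as an exercise, your covering-space lifting argument is the standard proof.
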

\noindent See Figure \ref{Hom}.  The proof is left as an exercise. 
Part (b) follows from  (a) using  that the subtraction of the two homotopies gives a homotopy between $\Gamma(t):=\sigma(t) - \gamma(t)$ and $\Gammah(t):=\gammah(t) -\sigmah(t)$ in $\C\setminus \{0\}$ relative to their common endpoints (i.e., to $\{0,1\}$). 

 We shall use part (b) repeatedly throughout this section and inside the proof of the Main Theorem.   Informally, it says that we can deform each curve separately as long as we do not create zeros of the subtraction curves $\Gamma_s$   at any step. Observe that a continuous series of  reparametrizations of a curve is a homotopy. For single curves, this has no effect in the index. But in the setup above, reparametrizing one of the curves does affect $\Gamma$, since it changes how the points in $\gamma$ and $\sigma$ correspond. 

\begin{figure}[htb!]
\centering
\includegraphics[width=0.35\textwidth]{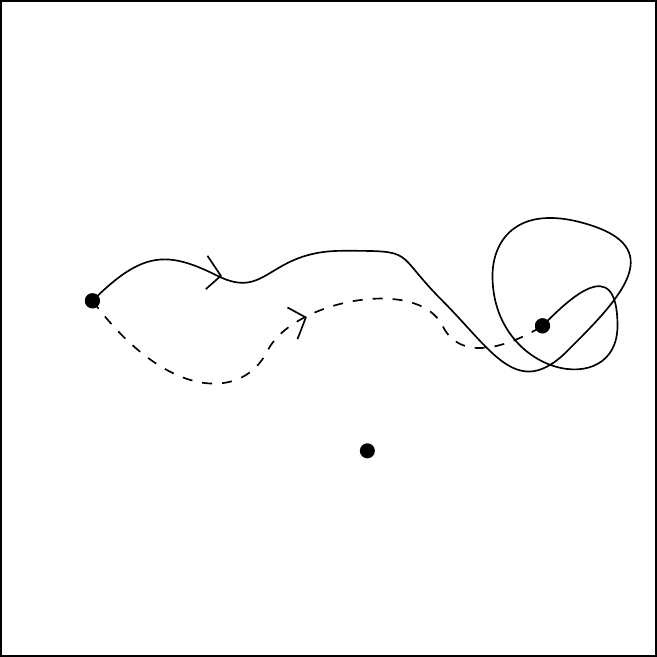} \hfil
\includegraphics[width= 0.35 \textwidth]{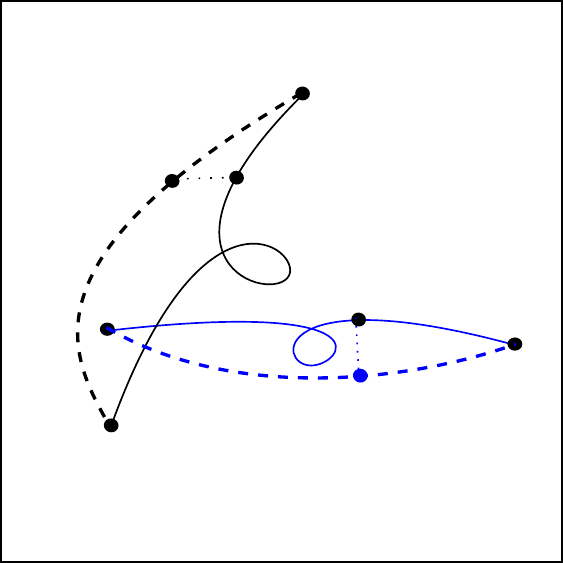}
\setlength{\unitlength}{0.83\textwidth}
\put(-0.74,0.1){$P$}
\put(-0.94,0.2){$P_0$}
\put(-0.64,0.23){$P_1$}
\put(-0.36,0.08){\scriptsize $\sigma(0)$}
\put(-0.21,0.37){\scriptsize $\sigma(1)$}
\put(-0.35,0.19){\scriptsize $\gamma(1)$}
\put(-0.05,0.18){\scriptsize $\gamma(0)$}
\put(-0.17,0.19){\scriptsize $\gamma(0.5)$}
\put(-0.17,0.115){\scriptsize $\gammah(0.5)$}
\put(-0.235,0.285){\scriptsize $\sigma(0.5)$}
\put(-0.37,0.285){\scriptsize $\sigmah(0.5)$}
\caption{\small Setup of the First Homotopy Lemma part (a) (left), {with $P_0:=\gamma(0)=\gammah(0)$},  and part (b) (right).  In this particular example, because of the way that $\gamma$ and $\sigma$ correspond, we may deform $\sigma$ and $\gamma$ (continuous)  into $\sigmah$ and $\gammah$ (dashed) without creating any zero of $\sigma_s(t)-\gamma_s(t)$ in the way. This means that $\ind(\sigma_s-\gamma_s,0)$ is constant for all $s$. The dotted curves denote $\gamma_s(0.5)$ and $\sigma_s(0.5)$ respectively.}
\label{Hom}
\end{figure}

With the first Lemma as a tool we prove the following. See Figures \ref{Hom2a} and \ref{Hom2b}.

\begin{lem}[\bf Second Homotopy Lemma] \label{Third homotopy lemma}
Let  $\gamma, \sigma:[0,1] \to \C$ be  two curves such that $\sigma(0)=\sigma(1)=P\in\C$. Suppose one of the following occurs: 
\begin{itemize}
\item[\rm (a)] $\gamma \cap \sigma=\emptyset$; or
\item[\rm (b)] $\gamma \cap \sigma\neq \emptyset$ but 
\begin{itemize}
\item[\rm (i)] $\gamma(t)\neq \sigma(t)$ for all $t\in [0,1]$,  and $P\notin \gamma$;
\item[\rm (ii)] there exists a connected component $U$ of $\C\setminus \sigma$, such that $\gamma(0),\gamma(1) \in U$ and $\gamma(0,1) \subset \overline{U}$;
\item[\rm (iii)] if $z\in \partial U$ has more than one access from $U$, then $z\neq \gamma(t)$ for any $t\in[0,1]$.
\end{itemize}
\end{itemize}
Then 
 \[
 \ind(\sigma-\gamma,0)=\ind(\gamma,P) + N,
 \]
 where $N=\ind(\sigma,z)$ for all $z\in\gamma \setminus \sigma$.
\end{lem}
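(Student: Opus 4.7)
The plan is to handle case (a) via a simultaneous reparametrization of $\sigma$ and $\gamma$, and in case (b) to first homotope $\gamma$ into $U$ and thereby reduce to case (a).

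In case (a), since $\gamma$ is connected and disjoint from $\sigma$, it lies in a single component $U$ of $\C\setminus\sigma$, and $N:=\ind(\sigma,z)$ is well defined and independent of $z\in U$. I would choose nondecreasing continuous surjections $\psi,\phi:[0,1]\to[0,1]$ fixing the endpoints, such that $\psi$ sweeps $[0,1]$ on $[0,1/3]$ and is constant equal to $1$ on $[1/3,1]$, while $\phi$ is constant equal to $0$ on $[0,1/3]$, sweeps $[0,1]$ on $[1/3,2/3]$, and is constant equal to $1$ on $[2/3,1]$. Set $\sigmah:=\sigma\circ\psi$ and $\gammah:=\gamma\circ\phi$. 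The linear interpolations in the parametrizations give simultaneous homotopies $\sigma\sim\sigmah$ and $\gamma\sim\gammah$ rel endpoints; during these, $\sigma_s(t)\in\sigma$ and $\gamma_s(t)\in\gamma$, so $\sigma_s(t)\neq\gamma_s(t)$ by the case~(a) hypothesis. The First Homotopy Lemma part~(b) then gives $\ind(\sigma-\gamma,0)=\ind(\sigmah-\gammah,0)$, and I would compute the latter as a sum of contributions over the three subintervals: on $[0,1/3]$, $\sigmah-\gammah$ is a reparametrization of the closed loop $\sigma-\gamma(0)$ traversed once, contributing $\ind(\sigma,\gamma(0))=N$; on $[1/3,2/3]$ it is $P-\gamma(s)$ as $s$ runs over $[0,1]$, and since $\arg(P-z)$ and $\arg(z-P)$ differ by a constant this contributes $\ind(\gamma,P)$; on $[2/3,1]$ it is the constant $P-\gamma(1)$, contributing $0$. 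Summing yields $N+\ind(\gamma,P)$.

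For case (b), the plan is to homotope $\gamma$ rel endpoints to a curve $\gammah\subset U$, keeping $\sigma$ unchanged, and then apply case~(a) to $\gammah$. Condition~(ii) places $\gamma$ inside $\overline U$; condition~(iii) ensures that at each boundary contact $\gamma(t_0)\in\partial U$ there is a single access from $U$, hence a well-defined inward direction locally; and condition~(i) guarantees a strictly positive clearance $d(t):=|\sigma(t)-\gamma(t)|$ for every $t$. Using local charts at each boundary contact together with a partition-of-unity argument, one constructs a continuous family $\gamma_s$ with $\gamma_0=\gamma$, $\gamma_1=\gammah\subset U$, fixed endpoints, $|\gamma_s(t)-\gamma(t)|<d(t)/2$, and (since $P\notin\gamma$ by~(i) and the pushes are small) $\gamma_s$ avoiding $P$ as well. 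Then $\sigma(t)-\gamma_s(t)\neq 0$ for all $s,t$, and the First Homotopy Lemma yields both $\ind(\sigma-\gamma,0)=\ind(\sigma-\gammah,0)$ and $\ind(\gamma,P)=\ind(\gammah,P)$. Since $\gammah\cap\sigma=\emptyset$, case~(a) applied to $\gammah$ completes the proof.

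The main obstacle is the construction of the push-into-$U$ homotopy in case~(b): the homotopy must be continuous in $t$, respect the local inward direction at each boundary contact, and stay within the clearance $d(t)/2$ guaranteed by condition~(i). Condition~(iii) is precisely what makes the local inward direction canonical, so that the pushes can be patched continuously across $[0,1]$; without it, $\gamma$ could pass through a pinch point of $U$ where no consistent choice of inward direction exists, and the reduction to case~(a) would break down.
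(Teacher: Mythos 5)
Your treatment of case~(a) is correct and in fact cleaner than the paper's. You reparametrize both $\sigma$ and $\gamma$ so that the subtraction curve splits exactly into three pieces (a closed loop $\sigma(\cdot)-\gamma(0)$ contributing $N$, a curve $P-\gamma(\cdot)$ contributing $\ind(\gamma,P)$, and a constant), whereas the paper only reparametrizes $\gamma$, obtains an approximate decomposition, and then passes to the limit $\epsilon\to 0$ using integrality of the index. Both are the same underlying idea; your version avoids the limiting step, and the verification that the reparametrizing homotopies satisfy the hypotheses of the First Homotopy Lemma (images of $\sigma_s$, $\gamma_s$ stay inside $\sigma$, $\gamma$, hence disjoint) is sound.

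Case~(b) is where your proposal has a genuine gap. You correctly identify the strategy (push $\gamma$ rel endpoints into $U$ without creating zeros of $\sigma(t)-\gamma_s(t)$, then apply case~(a)), and you correctly identify the role of condition~(iii) as making the push possible across boundary contacts. But ``local charts at each boundary contact together with a partition-of-unity argument'' does not hold up: $\partial U$ is the image of a continuous (not injective, not rectifiable) curve $\sigma$, so boundary points need not admit any local chart or normal direction in the usual sense, and the set $\{t : \gamma(t)\in\partial U\}$ can be an arbitrary closed subset of $[0,1]$ (e.g.\ a Cantor set), so there is nothing to partition unity over. Your argument that a single access gives a ``well-defined inward direction locally'' is the crux, and it is not established. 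The paper makes this rigorous by a uniformization trick: since $\partial U$ is locally connected (Hahn--Mazurkiewicz plus Torhorst), a Riemann map $\phi:\ov{\D}\to\ov{U}$ extends continuously; one pulls $\gamma$ back to $\gammah=\phi^{-1}\circ\gamma$ in $\ov{\D}$ (well-defined and continuous precisely because condition~(iii) guarantees each boundary contact point has a unique $\phi$-preimage), applies a radial angle-preserving shrink $h_s$ on $\ov{\D}$, and pushes forward by $\phi$. This simultaneously produces the continuity in $t$ and the disjointness $\gamma_1\cap\sigma=\emptyset$ that you need, and it is the step your proposal leaves unjustified. Also note a minor point: the clearance bound $|\gamma_s(t)-\gamma(t)|<d(t)/2$ with $d(t)=|\sigma(t)-\gamma(t)|$ only ensures $\sigma(t)\neq\gamma_s(t)$; the requirement $\gammah\cap\sigma=\emptyset$ (disjointness over all parameter pairs) comes from landing $\gammah$ inside the open set $U$, not from the clearance estimate.
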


\begin{figure}[htb!]
\centering
\includegraphics[width=0.3\textwidth]{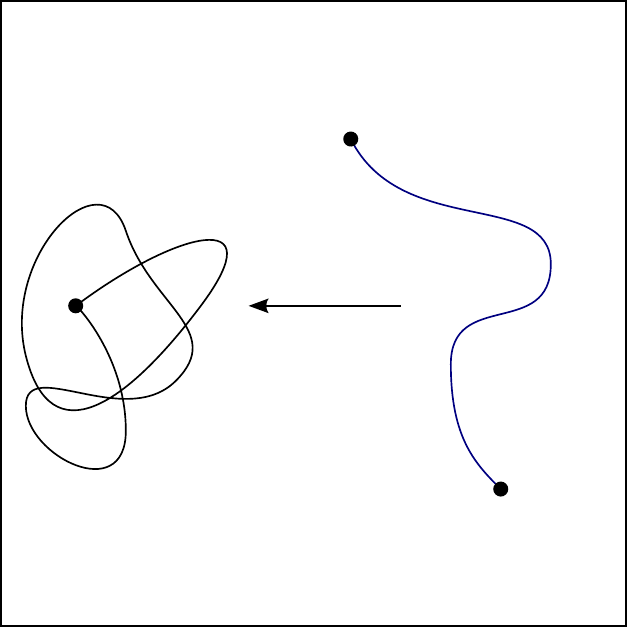} \hfil
\includegraphics[width= 0.3\textwidth]{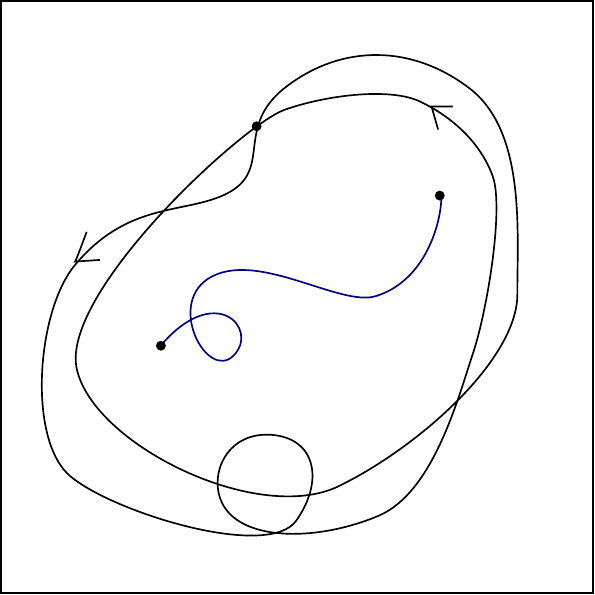}
\setlength{\unitlength}{0.83\textwidth}
\put(-0.82,0.25){$\sigma$}
\put(-0.86,0.2){\scriptsize $P$}
\put(-0.63,0.27){\b $\gamma$}
\put(-0.1,0.33){$\sigma$}
\put(-0.19,0.27){\scriptsize $P$}
\put(-0.11,0.17){\b $\gamma$}
\caption{\small Setup of the Second Homotopy Lemma part (a) with $N=0$ (left) and $N=2$ (right).}
\label{Hom2a}
\end{figure}

\begin{figure}[htb!]
\centering
\includegraphics[width=0.3\textwidth]{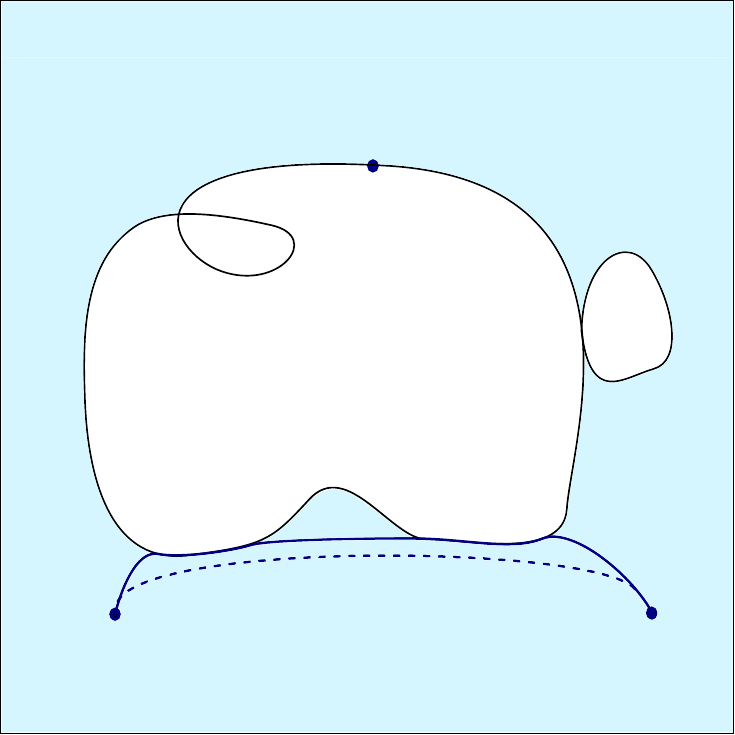} \hfil
\includegraphics[width= 0.3\textwidth]{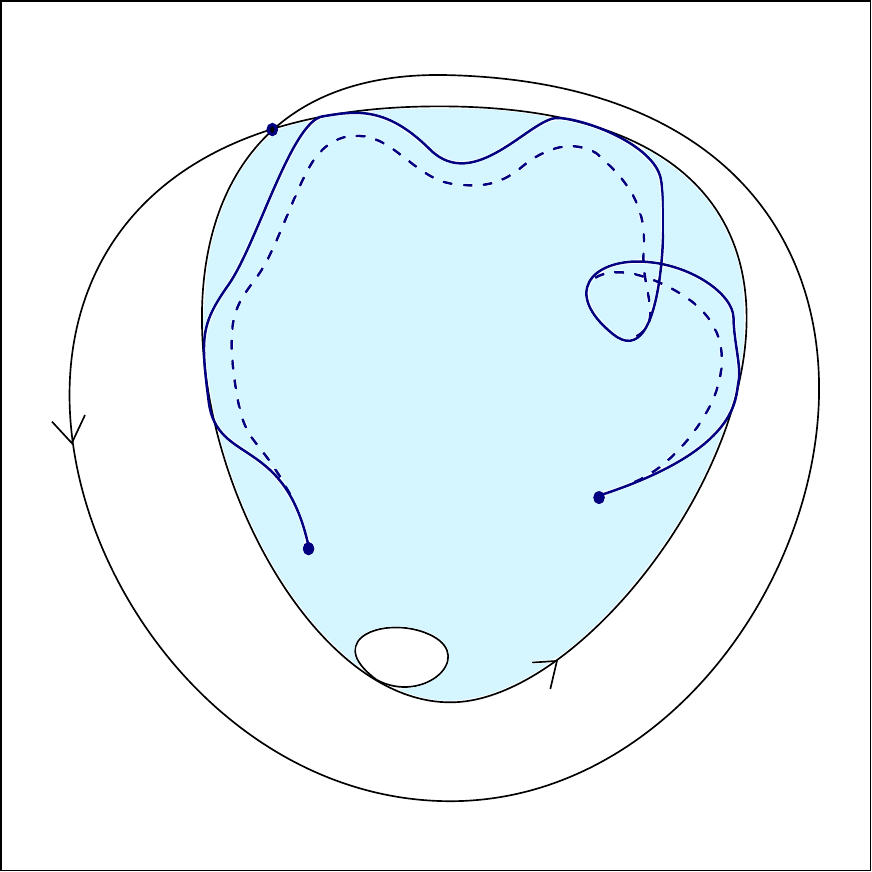}
\setlength{\unitlength}{0.83\textwidth}
\put(-0.83,0.26){$\sigma$}
\put(-0.72,0.29){\scriptsize $P$}
\put(-0.6,0.1){\b $\gamma$}
\put(-0.72,0.07){\b $\gamma_1$}
\put(-0.1,0.32){$\sigma$}
\put(-0.26,0.32){\scriptsize $P$}
\put(-0.105,0.14){\b $\gamma$}
\put(-0.18,0.26){\b $\gamma_1$}
\caption{\small Setup of the Second Homotopy Lemma part (b) with $N=0$ (left) and $N=2$ (right). The component $U$ is shadowed. The broken lines show the curve $\gamma_1$ (to be constructed in the proof) homotopic to $\gamma$ and not intersecting $\sigma$.}
\label{Hom2b}
\end{figure}

\begin{proof} 
{\bf Case (a):}  Since $\gamma$ and $\sigma$ do not intersect, all points in $\gamma$ must be in the same component $U$ of $\C\setminus \sigma$. Hence  the index $N$ is well defined for all $z\in\gamma$ and does  not depend on $z$. 

Observe that the curve $\sigma(t)-\gamma(t)$ starts with the vector $P-\gamma(0)$ and ends with the vector $P-\gamma(1)$. Up to an integer, the difference between the arguments of these two vectors is actually the index of $\gamma$ with respect to $P$, but also the index of $\sigma-\gamma$ with respect to $0$. Hence $ \ind(\sigma-\gamma,0) - \ind(\gamma,P) \in \Z$.  We will prove that this integer equals $N$  by proving that for every $\delta >0$ we can find $\epsilon>0$ and a curve  $\Gamma_\epsilon$ such that $\ind(\sigma-\gamma,0)=\ind(\Gamma_\epsilon,0)$ and 
\[
\left|\ind(\Gamma_\epsilon,0)-\ind(\gamma,P)-N \right|<\delta.
\]
The result then follows since the index must be an integer.

For every $\epsilon>0$, we first make a continuous series of reparametrizations $\gamma_s^\epsilon$ of $\gamma$, $s\in [0,1]$,  so that the final curve $\gamma_1^\epsilon$ is constant and equal to  $\gamma(0)$, for all $t\in [0, 1-\epsilon]$ and then ``runs'' to $\gamma(1)$ in the remaining $\epsilon$ time. An explicit  homotopy is for example 
\[
\gamma_s^\epsilon(t):=
\begin{cases}
\gamma(0) & \text{\ if $t\in [0,s(1-\epsilon)]$}\\
\gamma\left(\frac{t -s(1-\epsilon)}{1-s(1-\epsilon)}\right) &
\text{\ if $t\in [s(1-\epsilon),1]$}.
\end{cases}
\]

Now define  $\Gamma_\epsilon:=\sigma-\gamma_1^\epsilon$. Since the curves are disjoint at every step, $\Gamma_\epsilon$ is homotopic to $\sigma-\gamma$ (relative to  $\{0,1\}$)  in $\C\setminus \{0\}$. Observe that  $\Gamma_\epsilon(t)=\sigma(t)-\gamma(0)$ for all $t\in [0,1-\epsilon]$, hence the contribution to the index during this parameter interval is arbitrarily close to $N$ if $\epsilon$ is small enough. On the other hand, for $t\in [1-\epsilon,1]$ we have $\Gamma_\epsilon(t) \simeq P-\gamma(t)$, so the index contribution is  arbitrarily close to $\ind(\gamma,P)$ for $\epsilon$ sufficiently small. It follows that {$\ind(\Gamma_\epsilon,0)$}  is arbitrarily close to $\ind(\gamma,P)+N$ as we wanted to show.

\noindent {\bf Case (b):}  Observe that $N$ is also well defined in this case for the  points $z\in\gamma \setminus \sigma$: in fact $N(z)=\ind(\sigma,z)$ is constant in each component of $\C\setminus \sigma$, hence it is constant in $U$ and consequently, by (ii), does not depend on $z \in \gamma\setminus\sigma$.

In this setting the  trick of part (a) does not necessarily work since it is not clear that by reparametrizing $\gamma$ we do not create zeros of the subtraction curve. The idea is to make a homotopy which moves $\gamma$ away from $\sigma$. More precisely we will define a homotopy from $\gamma$ to a curve $\gamma_1$ in such a way that  $\ind(\sigma-\gamma,0)=\ind(\sigma-\gamma_1,0)$ and $\gamma_1 \cap \sigma=\emptyset$  and then  the result will follow from case (a).

To that end, observe that $U$ is a topological disk (in $\chat$) with  locally connected boundary, since $\sigma$ is connected and locally connected  (the image of a closed interval is locally connected by Hahn-Mazurkiewicz Theorem, and {so is  the boundary of} each component of $\C\setminus \sigma$   by Torhorst's Theorem  \cite{Why}). Let  $\phi:\ov{\D}\to \ov{U}$ be a Riemann map extended to the boundary and set $\gammah(t):=\phi^{-1}(\gamma(t)) \in \ov{\D}$, {which is a continuous curve by (iii)}. Observe that  $\phi^{-1}(\sigma\cap\gamma)$ is contained in  $\{|z|=1\}$.
Fix $0<\rho<1$ such that $\gammah(0),\gammah(1) \in \D_\rho$ and let $\epsilon <1-\rho$. For $r,s\in [0,1]$ let $l_s$ be the piecewise linear map which is {the identity}  on $[0,\rho]$ and affine from $[\rho,1]$ onto $[\rho, 1-s\epsilon]$. Observe that $l_s$  depends continuously on $s$. Now define $h_s : \ov{\D} \to \ov{\D}_{1- s \epsilon}$ as $h_s(r e^{i\theta})=l_s(r) e^{i\theta}$. Then $h_s$ is an angle-preserving  homeomorphism which is the identity on $\ov{\D}_\rho$, and  sends the annulus $\ov{\D}\setminus \D_\rho$ to the annulus $\ov{\D}_{1-s\epsilon} \setminus \D_\rho$, satisfying that $h_0={\rm Id}$ on $\D$.
Hence the map 
\[
\widehat{H}(t,s):=\gammah_s(t):=h_s(\gammah(t))
\]
is continuous in both variables and a homotopy between $\gammah$ and $\gammah_1$ relative  to $\{0,1\}$. It follows from {the continuity of $\varphi$ and $\gammah$}  that 
\[
 H(t,s):=\gamma_s(t):=\phi (\gammah_s(t))
 \]
defines a homotopy between $\gamma$ and $\gamma_1$ relative to its endpoints. 
It is clear that $\sigma(t)-\gamma_s(t)\neq 0$ for all $s,t\in [0,1]$ hence, by Lemma~\ref{First Homotopy Lemma}, we deduce that  $\ind(\sigma-\gamma,0)=\ind(\sigma-\gamma_1,0)$.   Since $\sigma$ is disjoint from $\gamma_1$, we obtain from case (a) that $\ind(\sigma-\gamma_1,0)=\ind(\gamma_1,P) + N'$ where $N'=\ind(\sigma,z)$ for all $z\in\gamma_1$. Clearly we have that $N=N'$ because $\gamma_1 \subset U$. Finally observe that $  \ind(\gamma_1,P) =\ind(\gamma,P) $ because the homotopy avoids $P$ and the endpoints remain fixed.

\end{proof}


\section{Global counting:  proof of Theorem B}
\label{Global counting}

Recall that, given a fundamental domain $F$ and its fixed dynamic  ray $g_F$ as given by the Structural Lemma \ref{Technical}, we say that $g_F$ \emph{lands alone} if it lands, and no other fixed ray has the same landing point.  {By the snail lemma,} a {fixed} ray may land alone only at a repelling or  parabolic fixed point. By Proposition~\ref{Forced landing}, there are only finitely many fundamental domains whose fixed rays do not land alone  at repelling fixed points. 

\begin{defn}[\bf Full and complete collections]\label{Full and complete collections}
A collection $\FFa$ of fundamental domains in $\Ta$ is  {\em full} if for any $F,F' \in \FFa$, the collection contains any other  fundamental domain in $\Ta$ which is in between $F$ and $F'$ with respect to the order of the fundamental domains in $\Ta$. A collection $\FF$ of fundamental domains in $\TT$ is {\em full} if its restriction to each one of the tracts is full. We say that $\FF$ is {\em complete} if it contains all fundamental domains whose fixed ray does not land alone at a repelling fixed point, all fundamental domains which intersect $D$  and all  {fundamental domains which intersect any fixed ray which is not fully contained inside its fundamental domain.} 
\end{defn}
Informally, a collection is full if it has no gaps (in each tract). {Also, since each fixed ray lands, it only intersects finitely many fundamental domains before landing.} Theorem B is an easy  consequence of the following theorem.

\begin{thm}[\bf Global counting]
\label{Global counting theorem} 
Let $f\in \BBt$ and assume that all fixed rays land. Let  $\FF$ be a finite collection of fundamental domains which is full and complete.  Let $N=\#\FF$ and $\GG$ be the collection of {the} $N$ fixed rays which are asymptotically contained in some $F\in \FF$.
Then there are exactly $N+1$ fixed points counted with multiplicity, which are either landing points of some $g\in \GG$, or interior fixed points. 
\end{thm}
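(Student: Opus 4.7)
My strategy is to apply the Argument Principle to a carefully constructed Jordan curve $\gamma$ that bounds a simply connected region $\Omega$ whose fixed points of $f$ are precisely the $N+1$ fixed points in the conclusion.

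I would first construct $\Omega$ and $\gamma$. Fix $R$ large enough that Lemma~\ref{Technical}(b) applies to the full, complete family $\FF$ and that $C_R$ encloses every fixed point of $f$ lying in $\Dbar\setminus\TT$ or in some $F\in\FF$. Since fundamental domains accumulate only at infinity, only finitely many $F\notin\FF$ meet $\ov{D_R}$, and by completeness each such $F$ has $F\cap D=\emptyset$. Construct $\gamma$ by starting from $C_R$ and, each time $C_R$ crosses some such $F\notin\FF$, replacing the arc $C_R\cap F$ with the bounded arc of $\partial F$ that closes off the topological disc $F\cap\ov{D_R}$; the result is a Jordan curve bounding
\[
\Omega \;:=\; D_R\setminus\bigcup_{F\notin\FF} F.
\]
By Proposition~\ref{Forced landing} each $F\notin\FF$ contains exactly one (repelling) fixed point in its interior, so $\Omega$ contains every fixed point of $f$ in $D_R$ other than these. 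Thus the fixed points of $f$ in $\Omega$ are precisely the interior fixed points (which by Section~\ref{Where are interior periodic points?} lie in $\Dbar\setminus\TT$ or in some $F\in\FF$) together with the landing points of rays in $\GG$, and the Argument Principle gives
\[
\#\bigl(\mathrm{Fixed}(f)\cap \Omega\bigr) \;=\; \ind\bigl(f-\mathrm{id}|_\gamma,\,0\bigr),
\]
counted with multiplicity. It remains to show this index equals $N+1$.

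To compute the index, decompose $\gamma$ into three families of arcs: (a) the $N$ arcs $C_R\cap F$ with $F\in\FF$; (b) the arcs of $C_R$ in $\C\setminus(\TT\cup\Dbar)$; and (c) the arcs of $\partial F\cap\ov{D_R}$ with $F\notin\FF$. On the type-(a) arcs, Lemma~\ref{Technical}(b) gives $|f(z)|>R=|z|$, and using Lemma~\ref{First Homotopy Lemma}(b) I would deform $\mathrm{id}|_{\gamma_a}$ to a simple curve inside $D_R$ without creating a zero of $f-\mathrm{id}$, reducing the contribution to $\ind(f|_{C_R\cap F},0)$. Since $f|_F:F\to\C\setminus(\Dbar\cup\delta)$ is a biholomorphism and $C_R\cap F$ separates the two ``ends'' of $F$, its $f$-image separates $\Dbar$ from infinity in $\C\setminus(\Dbar\cup\delta)$, winding exactly once around $0$; hence each type-(a) arc contributes $+1$ to the index, totalling $N$. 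On type-(b) and type-(c) arcs, away from transition points near their endpoints, $f(z)$ lies in $\Dbar\cup\delta$ and is dominated by $z$, so a dual homotopy deforms $f$ to $0$ and identifies the combined contribution with $\ind(\mathrm{id}|_{\gamma_b\cup\gamma_c},0)$. Since the Jordan curve $\gamma$ has winding number $+1$ around $0$, this combines with the type-(a) contribution to yield $\ind(f-\mathrm{id}|_\gamma,0)=N+1$.

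The main obstacle is that the homotopies on different arcs are not relative to their endpoints, so the piecewise contributions are not individually integers; only their global sum around $\gamma$ is. I expect the Second Homotopy Lemma~\ref{Third homotopy lemma} to be essential at each transition point between consecutive arcs of different types, particularly on type-(c) arcs, where $|f(z)|$ may exceed $|z|$ near the endpoints on $C_R$ because $\delta$ is unbounded. One may need either to choose $\delta$ so that these transition regions behave well, or to use the logarithmic-coordinate estimates of Section~\ref{Tools} (in particular the bounded-slope condition of Proposition~\ref{Bounded slope}) to control $|f|$ versus $|z|$ along preimages of $\delta$, thereby ensuring the global deformation of $(f-\mathrm{id})(\gamma)$ stays inside $\C\setminus\{0\}$ and yields the integer $N+1$.
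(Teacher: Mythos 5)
Your plan — apply the Argument Principle to a Jordan curve bounding a region that contains exactly the $N+1$ fixed points in question — is the right strategy, and it is the same overall idea as the paper's. However, the specific curve you build, and the index bookkeeping that goes with it, have genuine gaps that you acknowledge but do not close, and these are precisely what the paper's more careful construction is designed to avoid.

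The key difference is the choice of Jordan curve. You take $\gamma$ to be $C_R$ with the arcs $C_R\cap F$ (for the finitely many $F\notin\FF$ meeting $D_R$) replaced by arcs of $\partial F$. The paper instead builds a curve that enters each tract along preimages $\delta_\alpha^\pm$ of a \emph{bounded} initial segment $\delta_P\subset\delta$, crosses the tract along an arc $r_\alpha$ of the \emph{preimage} $f^{-1}(C_R)$ (which by the Structural Lemma lies well inside $D_R$), and travels between tracts along arcs $\gamma_\alpha$ in $\C\setminus(\ov{\TT\cup D})$. All these pieces begin and end at preimages of the single anchor point $P\in C_R\cap\delta$. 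The payoff is that $f(\Gamma)$ has a completely explicit description: each $f(r_\alpha)$ is $C_R$ traversed $N_\alpha$ times based at $P$, and each $f(\Gamma_\alpha)$ is a loop based at $P$ contained in $\ov D\cup\delta_P$, disjoint from $\Gamma_\alpha$. The Second Homotopy Lemma then gives each piece's contribution as $N_\alpha+\ind(r_\alpha,P)$ or $\ind(\Gamma_\alpha,P)$, and the leftover $\ind(\cdot,P)$ terms telescope to $\ind(\Gamma,P)=1$. Your curve lacks this anchoring: the quantity $\ind(f(C_R\cap F),0)$ for an open arc is not an integer, so the claim that each type-(a) arc ``contributes $+1$'' is not meaningful on its own, and the cancellation of the fractional parts across the transition points is exactly the part you flag as unresolved at the end.

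There are two further concrete gaps. First, $C_R\cap F$ and $F\cap\ov{D_R}$ need not be a single arc or a single topological disc; a fundamental domain can cross $C_R$ several times, so your Jordan curve $\gamma$ and region $\Omega$ are not obviously well defined without further argument. Second, on the type-(c) arcs $\partial F\cap\ov{D_R}$ for $F\notin\FF$, the image lies on $\delta\cup\partial D$, and $\delta$ is unbounded; you correctly observe that $|f(z)|$ can exceed $|z|=R$ near the endpoints, but you do not supply the estimate that controls the subtraction curve $f(z)-z$ there. The paper sidesteps this entirely by replacing $\delta$ with $\delta'\subset\delta$ disjoint from all fixed rays (Claim 4.1), taking the portion of $\gamma$ inside the tracts to be preimages of the \emph{bounded} arc $\delta_P$, and never putting $\partial F\cap\{|z|\ge R\}$ on the curve at all. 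So while your outline is in the right spirit, filling in the details with your choice of curve would essentially require rediscovering the paper's construction; as written, the index computation is not complete.
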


Let us first show how Theorem B follows from Theorem \ref{Global counting theorem}. 

\begin{proof}[Proof of Theorem B]
Let $G$ be the finite collection of $L$  fixed rays given in Theorem B, which includes all rays which do not land alone at repelling fixed points.  Let $\FF'$ be the finite collection of $L$  fundamental domains in which the rays in $G$ are asymptotically contained.   By adding a finite number of fundamental domains to $\FF'$, (whose fixed rays must land alone at repelling fixed points),   we obtain a new finite collection $\FF \supset \FF'$ which is complete and full (informally, we first add all fundamental domains which intersect $D$ and are not already in $\FF'$, {then add those which are "temporarily" visited by some fixed ray} and then "fill in" the gaps in between non-consecutive domains). {Since all fixed rays land, we have added only a finite number of new domains.} Set $N=\#\FF$, and let $\GG \supset G$ be the collection of $N \geq L$ rays associated to $\FF$ . By Theorem \ref{Global counting theorem} there exist $N+1$ fixed points counted by multiplicity which are either landing points of some $g\in \GG$, or interior fixed points. Now note that  the $N-L$ rays in $\GG \setminus G$ are necessarily
fixed rays which land alone at repelling (hence simple) fixed points. Therefore there are exactly $N-L$ simple fixed points which are landing points of these $N-L$ rays. Thus the remaining $L+1$ fixed points (counted with multiplicity) are either landing points of a ray $g\in G$ or interior fixed points. 
\end{proof}

Before proving Theorem \ref{Global counting theorem} we would like to emphasize the following simple but rather surprising corollary.

\begin{cor} \label{onlyoneregion}
Let $f\in \BBt$ and assume that all fixed rays land.  If there is only one basic region (with respect to the set of fixed rays), then there is  exactly one interior fixed point or virtual fixed point.
\end{cor}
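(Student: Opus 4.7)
The plan is to apply Theorem B to the setting of a single basic region and then translate the resulting count into the desired statement via the local structure at fixed points of multiplicity at least two.

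First I would observe that the one-basic-region hypothesis forces every fixed ray to land alone: if two fixed rays shared a landing point, the resulting ray pair (together with $\infty$) would form a Jordan curve in $\chat$ separating the plane into at least two basic regions, contradicting the assumption. Consequently the collection $G$ of fixed rays landing at non-repelling fixed points coincides with the collection of fixed rays which do not land alone at a repelling fixed point. By Proposition A, $G$ is finite, say of cardinality $L$, so Theorem B applies with this choice of $G$.

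Let $z_1,\ldots,z_L$ be the distinct landing points of the rays in $G$, with multiplicities $\mu_1,\ldots,\mu_L\geq 1$ as zeros of $f(z)-z$, and let $z_1^*,\ldots,z_k^*$ be the interior fixed points, with multiplicities $\mu_1^*,\ldots,\mu_k^*\geq 1$. Theorem B gives
\[
\sum_{j=1}^{L}\mu_j+\sum_{i=1}^{k}\mu_i^* = L+1.
\]
Since every summand is at least $1$, this equation admits only two solutions: (A) $k=1$, $\mu_1^*=1$ and every $\mu_j=1$; or (B) $k=0$, exactly one $\mu_j$ equals $2$ and the remaining $\mu_j$ equal $1$.

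To finish, I would recall that a fixed point of multiplicity $\mu\geq 2$ has $f'=1$ there and sits on the boundary of exactly $\mu-1$ immediate parabolic basins fixed by $f$ (its attached virtual fixed points), whereas a fixed point of multiplicity $1$ has none attached. Hence in case (A) there are no virtual fixed points and exactly one interior fixed point, while in case (B) there is no interior fixed point and the unique $z_j$ with $\mu_j=2$ contributes exactly one virtual fixed point. In either case the union of interior fixed points and virtual fixed points has cardinality one, as claimed. The only delicate step is remembering to count the $z_j$'s with multiplicity in the Theorem B identity, which is exactly what allows a double parabolic landing point in case (B) to account for a virtual fixed point.
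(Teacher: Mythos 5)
Your proof is correct and follows essentially the same strategy as the paper's: observe that one basic region forces every fixed ray to land alone, apply Theorem B with $G$ the finitely many rays not landing alone at repelling fixed points, and extract the conclusion from the resulting count. Your version is in fact a bit more careful than the paper's, since you make the multiplicity bookkeeping explicit via the identity $\sum_j \mu_j + \sum_i \mu_i^* = L+1$ and enumerate the two ways it can be satisfied, which cleanly covers the case of a parabolic landing point with multiplier a root of unity $\neq 1$ (multiplicity one, no attached virtual fixed point) that the paper's prose glosses over.
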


\begin{proof}
If there is only one basic region then there are no fixed ray pairs, that is, all fixed rays land alone either at repelling fixed points or parabolic ones. There are only a finite number of the latter by Proposition A. Let $G$ be {\em any} collection of $N$ rays which includes them. By Theorem $B$ there are $N+1$ fixed points counted with multiplicity which are either landing points of some $g\in G$ or interior fixed points. But all rays land alone; so, either all the landing fixed points are repelling and there is exactly one fixed point left which must be  
 an interior and non-multiple  fixed point, or one (and only one) of the landing points is parabolic of multiplicity two and there is one (and only one) virtual fixed point. 
\end{proof}
This means for example, that even if $f$ may have infinitely many singular values, if all fixed rays land alone, there is at most one non-repelling fixed point. 

\noindent The remaining of the section is dedicated to prove Theorem \ref{Global counting theorem}. 

\begin{proof}[Proof of Theorem \ref{Global counting theorem}]
Let $\FF$, $\GG$ and $N$ be as in the statement. Note that, since $\FF$ is complete,  rays in $\GG$ cannot intersect any fundamental domain which is not in $\FF$. Since the collections are finite, their members intersect a finite number of tracts say, $\{\Ta\}_{1\leq \alpha \leq M}$, where the tracts are labeled consecutively respecting the cyclic order. From now on  the index $\alpha$ will be taken modulo $M$.  Let $\FFa:=\FF\cap \Ta$, and $N_\alpha:=\#\FFa$. 

\begin{claim} \label{wearesaved}
Without loss of generality we may assume that 
\begin{itemize}
\item[(a)] $\delta$ does not intersect  the set of fixed  rays and  
\item[(b)]  $\Ta$ intersects $D$ for every $1\leq \alpha \leq M$.
\end{itemize}
\end{claim}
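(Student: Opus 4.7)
The plan is to modify the auxiliary data $(D,\delta)$ — whose choice enjoys substantial freedom, by Lemma~\ref{Existence of delta} and the remark following it — so that both properties hold; I would establish (b) first and then (a).

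For (b), since $\FF$ is finite it meets only the $M$ tracts $\Ta$, $1\leq\alpha\leq M$, each of which is unbounded. I would simply enlarge $D$ to a larger Jordan domain $\hat D\supset\ov D$ containing $S(f)\cup\{0,f(0)\}$ whose interior meets every one of these $\Ta$. The new tracts $f^{-1}(\C\setminus\ov{\hat D})$ and the new fundamental-domain partition differ from the old ones only near the disc, and each element of $\FF$ (together with the fixed ray $g\in\GG$ asymptotically contained in it) is canonically identified with an element of the new decomposition, since rays are characterized by their asymptotic behavior.

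For (a), I would exploit the freedom in choosing $\delta$. The curve $\delta$ lies in $\C\setminus(\ov D\cup\ov\TT)$, and at infinity it runs inside some access to $\infty$ disjoint from $\ov\TT$. By Lemma~\ref{asymptotically contained}, every fixed ray is asymptotically contained in a fundamental domain and hence asymptotically in a tract; so outside a sufficiently large compact set $K$ no fixed ray can meet $\delta$. Inside $K$ only finitely many fundamental domains intervene, since these accumulate only at $\infty$; combining this with the Forced Landing Proposition~\ref{Forced landing} — which forces any fixed ray asymptotically contained in a fundamental domain disjoint from $D$ to stay inside that fundamental domain near its landing point — one sees that at most finitely many fixed rays can possibly cross $\delta$ inside $K$. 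A local isotopy of $\delta$ supported in $K\cap(\C\setminus(\ov D\cup\ov\TT))$ then suffices to push $\delta$ off these finitely many arcs, since in a two-dimensional open set a curve can always be detoured around finitely many smooth arcs.

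The principal difficulty is bookkeeping: one must verify that the new $\delta$ and $\hat D$ do not disturb the combinatorial data encoded by $\FF$ and $\GG$. Both modifications perturb $f^{-1}(\delta)$ and hence the fundamental-domain boundaries; however, by Proposition~\ref{Almost straight} these perturbations are compactly supported inside each tract, so the asymptotic structure of the fundamental domains in $\FF$, and the identification of each $g\in\GG$ as a fixed ray associated to some $F\in\FF$, are unaffected.
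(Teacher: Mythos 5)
Your overall strategy --- enlarge $D$, adjust $\delta$, then complete $\FF$ --- is the same as the paper's, but both halves of your argument have gaps that the paper closes in a different and more robust way, essentially by a single, sufficiently large enlargement of $D$.

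For (b), enlarging $D$ to $\hat D$ so that its interior meets the old tracts $T_\alpha$ does not by itself imply that $\hat D$ meets the \emph{new} tracts $\hat T_\alpha := T_\alpha\cap f^{-1}(\C\setminus\ov{\hat D})$, which are strictly smaller than the $T_\alpha$; a priori one could have $\hat D\cap T_\alpha\subset f^{-1}(\ov{\hat D})$ and hence $\hat D\cap\hat T_\alpha=\emptyset$. The paper's proof makes the enlarged disc $D'$ contain every landing point $z_\alpha$ of a ray in $\GG$ and then applies Proposition~\ref{Forced landing}: if the new fundamental domain $F'_\alpha$ were disjoint from $D'$, its fixed ray (which is the same as before, the new fundamental domains being asymptotically unchanged since $\delta'=\delta$ outside a compact set) would have to stay inside $F'_\alpha\subset\C\setminus D'$ and land there, contradicting that it lands at $z_\alpha\in D'$. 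This is the ingredient your version of (b) is missing.

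For (a), the isotopy step is the real gap. The intersections of the finitely many problematic fixed rays with the region $U:=\C\setminus(\ov D\cup\ov\TT)$ need not consist of finitely many arcs (a ray landing on $\partial U$ can accumulate there), they are merely continuous rather than smooth, and --- most seriously --- even finitely many crosscuts of the simply connected region $U$ can separate the $\partial D$-end of $\delta$ from its $\infty$-end, in which case \emph{no} admissible $\delta$ avoiding them exists, so no isotopy can help. You would have to prove that such a separating configuration cannot occur, and that is not easier than the original claim. The paper avoids the entire difficulty: it enlarges $D'$ until every fixed ray is contained in $D'\cup\TT$ --- possible because Proposition~\ref{Forced landing} confines each ray from a fundamental domain disjoint from $D$ to that fundamental domain, while the finitely many remaining rays have bounded excursions outside $\ov\TT$ by Lemma~\ref{asymptotically contained} --- and then simply takes $\delta'=\delta\cap(\C\setminus D')$, which is automatically disjoint from all fixed rays.

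Finally, both proofs must verify that after the modification $\FF$ can be extended to a full and complete family $\FF'$ by adding finitely many fundamental domains whose fixed rays land alone at repelling fixed points, so that Theorem~\ref{Global counting theorem} for $\FF'$ implies it for $\FF$; your closing paragraph gestures at this but does not carry it out, whereas the paper spells it out explicitly.
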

\begin{proof}
{
In fact we will show that $(a)$ and $(b)$ are satisfied up to considering {another}  full and complete finite family $\FF'$ of fundamental domains, which is obtained by {first erasing some compact parts of the original fundamental domains and then} adding to $\FF$ finitely many extra ones (whose fixed ray must land alone at a repelling fixed point). Proving Theorem~\ref{Global counting theorem} for $\FF'$ then implies Theorem~\ref{Global counting theorem} for $\FF$.} {This new family will be constructed by enlarging $D$ and shortening $\delta$}.

Observe that rays are intrinsic to the function $f$, i.e., they do not depend on the definitions of $D$ or $\delta$. Tracts and fundamental domains  do. {By Lemma~\ref{Forced landing} there are only finitely many rays which are not fully contained in their fundamental domains; since any such ray is still asymptotically contained in a fundamental domain by Lemma \ref{asymptotically contained}}, there exists a sufficiently large disk $D'\supset D$ such that the set of fixed rays is contained in $D'\cup \TT$, where $\TT$ are the tracts defined by $D$. 
Also, for any tract   $\Ta$   which contains at least one fundamental domain  {$F_\alpha\in\FF$,  let $z_\alpha$ be the landing point of the fixed ray associated to $F_\alpha$; up to  enlarging $D'$, we can assume that it contains all such  $z_\alpha$.}
 Let $\delta'$ be the subset of $\delta$ in $\C\setminus D'$ connecting $D'$ with infinity. By construction $\delta'$ does not  intersect any  fixed ray. The tracts defined as preimages of $\C\setminus D'$ are smaller than the previous ones,  however, the  set of  fixed rays does not change. Also, the new fundamental domains are {the same} as the old fundamental domains {outside a compact set}, since $\delta'=\delta$ outside  a compact set. By Proposition \ref{Forced landing}, and for all $\alpha=1\ldots M$,  the new  fundamental domains $F_\alpha'$ which are contained in $F_\alpha$ all intersect $D'$,  since their fixed ray lands in $D'$.
 
The enlarged disk $D'$  intersects possibly more fundamental domains than the ones which were originally in $\FF$, so $\FF$ is no longer a  complete collection with respect to the new disk. However it can be turned into a full and complete collection $\FF'$ (which satisfies property $(b)$ by construction), by adding finitely many fundamental domains, whose fixed rays all land alone at repelling fixed  points since $\FF$ was originally complete.
\end{proof}
 
Let $C_R$ denote  the circle of radius $R$ and $D_R$ the closed disk bounded by $C_R$.  Choose $R$  large enough so that   
\begin{itemize}
\item  $D\subset D_R$;
\item $f^{-1}(C_R) \cap \FF$ is contained in $D_R$ (see part $(b)$ of  the Structural Lemma~\ref{Technical}), and 
\item all fixed rays associated to fundamental domains in $\FF$ land inside the disk $D_R$.  
\end{itemize}

Let $P$ be the first intersection of $\delta$ with  $C_R$ when moving from $D$ towards infinity. Call $\delta_P$ the arc in $\delta$ connecting $P$ to $D$. Let $\dapm$ be the two  preimages of $\delta_P$ which are contained in $\Ta$, so that each  $\dapm$ is on the boundary between a fundamental domain in $\FFa$ and a fundamental domain which is not in $\FFa$ (these curves are well defined because $\FF$ is full). Observe that by construction the curves $\dapm$ are in the complement of $D$. Label the $\dapm$ so as to respect the cyclic order in the sense that $\dam$ comes before $\dap$ and after $\damp$. 
Each $\dapm$ intersects the preimage of $C_R$ in a point 
$P^\pm_\alpha$ which is a preimage of $P$ (and hence it is inside $D_R$), and ends at the boundary of $\Ta$.  Up to increasing $R$, we may assume that these endpoints are also in $D_R$.
Let $r_\alpha$ be the arc in the preimage of $C_R$ connecting $\Pam$ and $\Pap$ inside $\Ta$ and such that $f(r_\alpha)$ covers $C_R$  $N_\alpha$ times.  As usual, give  $C_R$ and any other Jordan curve in this section a counterclockwise orientation; this induces an orientation on the arcs $r_\alpha$ (see Figure~\ref{G} for an illustration of a possible setup).

Finally, we claim that there exists $\gamma_\alpha$ Jordan arcs in $\C\setminus (\ov{\TT\cup D})$ connecting   $\damp$ with $\dam$, which do not intersect the set of fixed rays.  Indeed, $\TT \cup D  \cup \GG$ divides the plane into finitely many components. Some of them may be bounded (the tracts in $\Ta$ can intersect $\partial D$ several times -- the others do not intersect $D$ at all -- {and the rays could connect two different tracts through the complement of $D\cup \TT$}), but  there are exactly $M$  unbounded components, since the boundaries of the tracts consist of single unbounded curves, there are only $M$ tracts intersecting $D$ {and the relevant pieces of rays form a compact set.}  Let us denote by $E_\alpha$ the unbounded component in between $T_{\alpha-1}$ and $T_\alpha$ with respect to the cyclic order. By definition of $\damp$, being the boundary of the ``last'' fundamental domain of $T_{\alpha-1}$, its unique point of intersection with $\partial T_{\alpha-1}$ belongs to the boundary of $E_\alpha$. This is also the case for  $\dam$, so both endpoints are on the boundary of  the same open, connected and simply connected set $E_\alpha$ and can therefore be joined by a curve in its interior. Since only finitely many fixed rays intersect $\C\setminus \ov{\TT}$ by Proposition \ref{Forced landing}, the arc $\gamma_\alpha$ can be chosen to avoid them.

In the notation above, the curve $\delta$ belongs to a unique unbounded complementary component $E_\alpha$, since $\delta$ is connected and unbounded.  So  one and exactly one of the curves $\gamma_\alpha$ intersects $\delta$.
Choose that specific $\gamma_\alpha$ so as to intersect $\delta $ only once, and to do so in the set $\{|z|>R\}$. 
  Note that the image of each $\gamma_\alpha$ is fully contained in $D$.  

Call $\Gamma_\alpha$ the arcs  $\damp\cup\gamma_\alpha\cup\dam$ connecting $\Pamp$ to $\Pam$.
We will count the number of fixed points of $f$ inside the region  $\widehat{U}$ enclosed by the Jordan curve 
\begin{displaymath} \Gamma=\underset{\alpha\leq M}\bigcup (\Gamma_\alpha\cup r_\alpha).\end{displaymath}

Observe that, for $R$ large enough, all fixed points which are landing points of the fixed rays associated to the fundamental domains in ${\FF}_\alpha$ have to be contained in $\widehat{U}$. Also, because there are no fixed points in $\C\setminus(\ov{D\cup\TT})$, the  final count does not depend on the choice of $\gamma_\alpha$. We will also see that the result does not depend on $C_R$ (and hence on the $r_\alpha$) as long as $C_R$ satisfies the Structural Lemma~\ref{Technical} {and the condition above} (see Figure~\ref{G}).

\begin{figure}[hbt!]
\begin{center}
\def\svgwidth{0.8\textwidth}
\begingroup%
  \makeatletter%
  \providecommand\color[2][]{%
    \errmessage{(Inkscape) Color is used for the text in Inkscape, but the package 'color.sty' is not loaded}%
    \renewcommand\color[2][]{}%
  }%
  \providecommand\transparent[1]{%
    \errmessage{(Inkscape) Transparency is used (non-zero) for the text in Inkscape, but the package 'transparent.sty' is not loaded}%
    \renewcommand\transparent[1]{}%
  }%
  \providecommand\rotatebox[2]{#2}%
  \ifx\svgwidth\undefined%
    \setlength{\unitlength}{522.46962891bp}%
    \ifx\svgscale\undefined%
      \relax%
    \else%
      \setlength{\unitlength}{\unitlength * \real{\svgscale}}%
    \fi%
  \else%
    \setlength{\unitlength}{\svgwidth}%
  \fi%
  \global\let\svgwidth\undefined%
  \global\let\svgscale\undefined%
  \makeatother%
  \begin{picture}(1,0.75807552)%
    \put(0,0){\includegraphics[width=\unitlength]{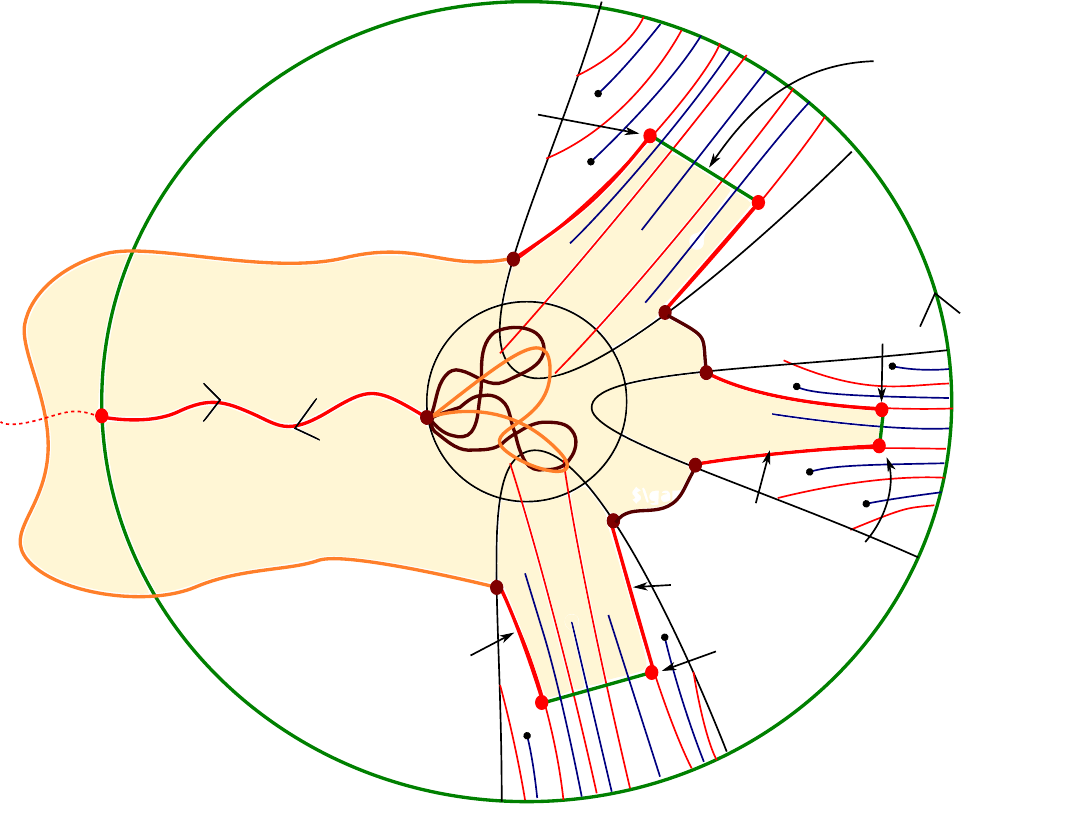}}%
    \put(0.64646345,0.44985639){\color[rgb]{0,0,0}\makebox(0,0)[lb]{\smash{$\gamma_3$}}}%
    \put(0.09961681,0.38449267){\color[rgb]{0,0,0}\makebox(0,0)[lb]{\smash{$P$}}}%
    \put(0.20540572,0.39676265){\color[rgb]{0,0,0}\makebox(0,0)[lb]{\smash{$\delta_P$}}}%
    \put(0.23783312,0.52876429){\color[rgb]{0,0,0}\makebox(0,0)[lb]{\smash{$\gamma_1$}}}%
    \put(0.58790489,0.00397227){\color[rgb]{0,0,0}\makebox(0,0)[lb]{\smash{\Large $T_1$}}}%
    \put(0.46467463,0.11074021){\color[rgb]{0,0,0}\makebox(0,0)[lb]{\smash{\ss $P_1^-$}}}%
    \put(0.66055415,0.1558304){\color[rgb]{0,0,0}\makebox(0,0)[lb]{\smash{\ss $P_1^+$}}}%
    \put(0.40934513,0.13651679){\color[rgb]{0,0,0}\makebox(0,0)[lb]{\smash{$\delta_1^-$}}}%
    \put(0.62391831,0.21782946){\color[rgb]{0,0,0}\makebox(0,0)[lb]{\smash{$\delta_1^+$}}}%
    \put(0.62549651,0.28236493){\color[rgb]{0,0,0}\makebox(0,0)[lb]{\smash{$\gamma_2$}}}%
    \put(0.67237153,0.27554504){\color[rgb]{0,0,0}\makebox(0,0)[lb]{\smash{$\delta_2^-$}}}%
    \put(0.77421913,0.24319278){\color[rgb]{0,0,0}\makebox(0,0)[lb]{\smash{\ss $P_2^-$}}}%
    \put(0.80146115,0.44891697){\color[rgb]{0,0,0}\makebox(0,0)[lb]{\smash{\ss $P_2^+$}}}%
    \put(0.66149353,0.51843114){\color[rgb]{0,0,0}\makebox(0,0)[lb]{\smash{$\delta_3^-$}}}%
    \put(0.70564438,0.56915763){\color[rgb]{0,0,0}\makebox(0,0)[lb]{\smash{\ss $P_3^-$}}}%
    \put(0.49522326,0.56164262){\color[rgb]{0,0,0}\makebox(0,0)[lb]{\smash{$\delta_3^+$}}}%
    \put(0.44355734,0.64806557){\color[rgb]{0,0,0}\makebox(0,0)[lb]{\smash{\ss $P_3^+$}}}%
    \put(0.39630912,0.456486){\color[rgb]{0,0,0}\makebox(0,0)[lb]{\smash{$D$}}}%
    \put(0.89205787,0.35447681){\color[rgb]{0,0,0}\makebox(0,0)[lb]{\smash{\Large $T_2$}}}%
    \put(0.6860541,0.73391457){\color[rgb]{0,0,0}\makebox(0,0)[lb]{\smash{\Large $T_3$}}}%
    \put(0.80627346,0.69740714){\color[rgb]{0,0,0}\makebox(0,0)[lb]{\smash{\small $r_3$}}}%
  \end{picture}%
\endgroup%
\end{center}
\caption{\small Sketch of a setup for the global counting. The curve $\Gamma$ is the boundary of the shaded region $\hat{U}$. Its image consists of the big circle $C_R$, together with $\delta_P$ and the images of the curves $\gamma_i$ which are inside $D$. In this example the Argument Principle gives 8 fixed points inside the shaded region, counted with multiplicity. Since there are only 7 fixed rays landing in $\hat{U}$, at least one of the fixed points must be an interior or virtual fixed point.}
\label{G}
\end{figure}

We calculate $\ind(f(\Gamma)-\Gamma, 0)$, by calculating it piece by piece on the $r_\alpha$ and on the $\Gamma_\alpha$.
Observe that by construction there are no fixed points on $\Gamma$.
Let us first calculate $\ind(f(r_\alpha)-r_\alpha,0)$. 
Observe that $f(r_\alpha)$ is the circle $C_R$ covered $N_\alpha$ times, starting and ending at $P$.
Then by the Second homotopy Lemma \ref{Third homotopy lemma},  part (a),

$$\ind(f(r_\alpha)-r_\alpha,0)=N_\alpha+ \ind(r_\alpha,P).$$

 Now let us calculate $\ind(f(\Gamma_\alpha)- \Gamma_\alpha,0)$. 
 The image of each $\Gamma_\alpha$ is a curve starting at $P$, moving along $\delta_P$, staying inside $\ov{D}$ and then moving back to $P$ along $\delta_P$. 

 Observe that $\Gamma_\alpha \cap f(\Gamma_\alpha) =\emptyset$. Then, by the Second Homotopy Lemma   \ref{Third homotopy lemma}, part (a), with $N=0$ we have
\[
\ind(f(\Gamma_\alpha)- \Gamma_\alpha,0)=\ind(\Gamma_\alpha,P).
\]
 
Summing up,  we obtain 
\[
\ind(f(\Gamma)-\Gamma, 0)=\underset{\alpha}\sum \Big(\ind(r_\alpha, P)+ N_\alpha +\ind(\Gamma_\alpha, P)\Big)=N+ \ind(\Gamma, P)= N+1.
\]

The last equality follows from the fact that $\Gamma$ is a Jordan curve (oriented counterclockwise) and $P$ is contained in the bounded connected component of $\C \setminus \Gamma$. Hence, there are exactly $N+1$ fixed points in $\widehat{U}$ counted with multiplicity.
\end{proof}

\section{Local Counting. Proof of the Main Theorem}\label{Local Counting}

If there exists only one basic region, the Main Theorem  reduces to  Corollary \ref{onlyoneregion}. So from now on we assume that there are at least two basic regions. 

Let us  consider a basic region $V$ and show that it contains exactly one interior fixed point or,  instead,  exactly one virtual fixed point. For the purpose of the proof in this section, we redefine   $V$  to contain also the fixed rays landing alone (at repelling or parabolic fixed points) that  were part of $\partial V$. In other words, $\partial V$ now consists exclusively of ray pairs.

Because of the presence of fixed rays and fixed points in $\partial V$, the counting is more involved than it was  in the last section. Notice however that the only fixed points in $\partial V$ are landing points of fixed rays, which by the Snail lemma \cite[Lemma 16.2]{Mi}  must be repelling or parabolic.  

The strategy of the proof is as follows. We first consider the case where $V$ contains no virtual fixed point. This implies that every boundary fixed point is repelling or, if it is parabolic,  both boundary rays land at it  from the same repelling petal (see the precise  definition below). We then define a new region $\Vhat$,  a bounded modification of $V$,  which includes all the repelling fixed points which were previously on $\partial V$ and excludes all the parabolic ones.
After defining $\Gamma_{\Vhat}:=\partial \Vhat$, now a fixed point free Jordan curve,  we will apply the argument principle to $f(\Gamma_{\Vhat}) - \Gamma_{\Vhat}$ to count the number of fixed points in $\Vhat$. We shall show that $\Vhat$ contains as many fixed points as the number of rays in $\Vhat$ landing alone, plus the number of repelling fixed points which where previously  in $\partial V$, plus one extra fixed point which must therefore be interior. Note that this extra fixed point cannot be attributed to a fixed point of multiplicity two since we assumed that $V$ had no virtual fixed points to start with  and we left out all parabolic points which were previously on $\partial V$.

To that end,  we shall compute $\ind(f(\Gamma_{\Vhat}) - \Gamma_{\Vhat},0)$  piecewise, as in the section above, using the Homotopy Lemmas in Section \ref{Index}, but also doing some homotopies {\em in situ}. It is worth noticing that homotopies are used exclusively to compute the indexes of certain arcs. They do not further modify the region $\Vhat$ or the map $f$. 
 
Finally we deal with the remaining regions which contain virtual fixed points. This will be done by a global counting argument.

\subsection*{Modification of $V$ near the fixed points on the boundary}\label{Fixed points on the boundary}
By definition basic regions are open. Suppose $z_0\in \partial V$ is a repelling or parabolic fixed point. We slightly modify the boundary of $V$ near $z_0$ making use of the local dynamics around $z_0$.  Here we describe the modification near every fixed point on $\partial V$; in the proof of the Main Theorem we will apply all the modifications together. In these lemmas $V'$ denotes the modification of $V$.

\begin{lem}[\bf Repelling fixed points in $\partial V$]\label{Enlarging rfp} Let $z_0$ be a repelling fixed point on  $\partial V$. Then, a small arc on $\partial V$ containing $z_0$ can be replaced by an  arc $\zeta$ with the same endpoints such that: 
\begin{itemize}
\item the resulting region  $V' \supset V$ satisfies  ${\rm Fixed}(f) \cap V' =\left( {\rm Fixed}(f) \cap V\right) \cup \{z_0\}$; 
\item $f$ has no fixed points on $\zeta$;
\item $f(\zeta)$ is contained in  $\C\setminus V'$.
\end{itemize}
\end{lem}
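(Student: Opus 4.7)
The plan is to use Koenigs' linearization at $z_0$ to reduce the dynamics to the model $w \mapsto \lambda w$ on a disc, and then carve out a small circular cap around $z_0$ on the outer side of $V$. Since $z_0$ is repelling, Koenigs' theorem yields a neighborhood $N$ of $z_0$ and a biholomorphism $\phi: N \to \D_\rho$ with $\phi(z_0)=0$ and $\phi \circ f = \lambda \phi$, where $\lambda = f'(z_0)$ has $|\lambda|>1$. I shrink $N$ so that $z_0$ is the unique fixed point of $f$ in $\ov N$.

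The two fixed rays $g_1, g_2$ bounding $V$ at $z_0$ meet $N$ in arcs whose images $\tilde g_i = \phi(g_i \cap N)$ are $\lambda$-invariant and accumulate at $0$, so locally near $0$ they divide $N$ into an ``inner'' component $\phi(V\cap N)$ and an ``outer'' component (on the opposite side of $V$). I choose $r > 0$ small enough that $\phi^{-1}(\ov{\D_r}) \subset N$, that $|\lambda| r < \rho$, and that each $\tilde g_i$ meets $\{|z|=r\}$ in a single point $\phi(p_i)$. Let $\zeta$ be the arc in $N$ which $\phi$ sends to the arc on $\{|z|=r\}$ connecting $\phi(p_1)$ to $\phi(p_2)$ through the outer side. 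Replacing on $\partial V$ the two sub-arcs of $g_i$ from $p_i$ to $z_0$ by $\zeta$ yields a region $V' = V \cup B \cup \{z_0\}$, where $B$ is the open ``bulge'' enclosed by $\zeta$ and those two sub-arcs on the outer side.

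The first two bullet points then follow immediately: the added set $V' \setminus V = B \cup \{z_0\}$ lies in $\phi^{-1}(\ov{\D_r}) \subset N$, and since $z_0$ is the only fixed point of $f$ in $\ov N$, we obtain ${\rm Fixed}(f) \cap V' = ({\rm Fixed}(f) \cap V) \cup \{z_0\}$; likewise $\zeta \subset N \setminus \{z_0\}$ contains no fixed point. For the third bullet point, $f(\zeta)$ corresponds in linearizing coordinates to $\lambda \cdot \phi(\zeta)$, which lies on the larger circle $\{|z|=|\lambda|r\}$; hence $f(\zeta) \subset N$ and $f(\zeta)$ is automatically disjoint from the bulge $B \subset \phi^{-1}(\ov{\D_r})$. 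What remains is to verify that $f(\zeta) \cap V = \emptyset$.

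I plan to carry out this last step by passing to log-polar coordinates $(u,\theta) = (\log|z|, \arg z)$ on $\D_\rho \setminus \{0\}$: multiplication by $\lambda$ becomes translation by the vector $v = (\log|\lambda|, \arg\lambda)$, and each $\lambda$-invariance $\lambda \tilde g_i \subset \tilde g_i$ translates into $\tilde g_i + v \subset \tilde g_i$, so that each $\tilde g_i$ becomes a $v$-translation-invariant curve (asymptotically a line of slope $\log|\lambda|/\arg\lambda$ when $\arg\lambda\neq 0$, vertical otherwise). The two such parallel curves cut a punctured neighborhood of $0$ into strips, one of which corresponds to $\phi(V\cap N)$; since $v$ is tangent to the dividing curves, translation by $v$ preserves each strip, so $\lambda\cdot\phi(\zeta)$ remains in the outer strip and $f(\zeta) \cap V = \emptyset$. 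The main obstacle is this final verification in the spiraling case $\lambda \notin \R_{>0}$, where the sector picture in the original coordinates is not available and the log-polar change of variables is essential to straighten the invariant curves into (nearly) parallel lines.
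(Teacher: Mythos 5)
Your overall strategy---passing to Koenigs linearizing coordinates so that $f$ becomes $w\mapsto\lambda w$, carving off a small cap on the outer side of $V$ bounded by a circle, and then using log-polar coordinates (where multiplication by $\lambda$ becomes translation by the fixed vector $v=(\log|\lambda|,\arg\lambda)$) to track which side $f(\zeta)$ lands on---is reasonable and takes a genuinely different route from the paper, which works directly with $f$ and a Euclidean disc $D_\epsilon$ around $z_0$ with $f(D_\epsilon)\supset\overline{D_\epsilon}$.

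There is, however, a real gap at the step ``I choose $r>0$ small enough that \ldots each $\tilde g_i$ meets $\{|z|=r\}$ in a single point $\phi(p_i)$.'' This is not achievable in general. In log-polar coordinates $\tilde g_i$ is a $v$-translation-invariant curve tending to $u=-\infty$; $v$-invariance imposes a periodicity on the curve, but it does \emph{not} force the $u$-coordinate to be monotone along it. The linearized ray may oscillate radially (``wiggle in and out''), and because of the $v$-invariance the same oscillation recurs at every scale, so if the radial amplitude of the wiggle exceeds $\log|\lambda|$ then \emph{every} circle $\{|w|=r\}$ meets $\tilde g_i$ in several points---no admissible $r$ exists. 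In that case the arc on $\{|z|=r\}$ ``through the outer side'' is not a subarc of $\partial V'$: it crosses the rays, the strip picture at the end breaks, and also $g_i\cap N$ need not even be a single arc. This is exactly the contingency the paper's proof is built around: it takes $V'$ to be the \emph{filled} union $V\cup D_\epsilon$ (adjoining the bounded complementary components produced by the wiggles), lets $\zeta$ consist partly of $\partial D_\epsilon$ and partly of pieces of the rays themselves, and verifies $f(\zeta)\subset\C\setminus V'$ separately on the two kinds of pieces ($\partial D_\epsilon$-pieces map off $\overline{D_\epsilon}$; ray-pieces map to higher-potential ray-pieces, hence to $\partial V'$). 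Your construction needs an analogous filling-in step and a two-case check; as written it handles only the non-wiggling case and is therefore incomplete.
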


\begin{proof}
Let $D_\epsilon$ be a small disk centered at $z_0$, with $\epsilon$ small enough so that $f(D_\epsilon)$ is a topological disk containing $\overline{D_\epsilon}$. See Figure \ref{modificationrfp}. We define $V'$ as the ``filled'' region $V\cup D_\epsilon$, that is, the region $V\cup D_\epsilon$ union the (finite number of) bounded connected components of the complement of {$\overline{V\cup D_\epsilon}$}  (which are due to the possible wiggling of the rays in and out $D_\epsilon$).  By choosing $\epsilon$ small enough, we can make sure that we added no extra fixed points to $V'$ other than $z_0$. 
 We then define $\zeta$ as the arc in $\partial V'$ starting an ending at $\partial D_\epsilon$ such that $\partial V' \setminus \zeta \subset \partial V$. More precisely, $\zeta$ joins the two points $x_1$ and $x_2$ in $\partial D_\epsilon$ which correspond to the first intersection (when coming from infinity) of the fixed rays in $\partial V$ landing at $z_0$, with $\partial D_\epsilon$. Observe that if these rays do not wiggle outside of $D_\epsilon$ after entering it for the first time, then $\zeta \subset \partial D_\epsilon$. Otherwise, $\zeta$ also contains parts of the rays. 

By construction  $f(\zeta)$ has no fixed points and is contained in $\C\setminus{V'}$. Indeed, {$V'$ can be characterized as the union of all bounded domains whose boundary is contained in  $\overline{V\cup D_\epsilon}$. Locally, the preimage of such domain has the same property. Thus locally $V'$ is backward invariant.}
\end{proof}

We now consider the case of parabolic fixed points.  For the following description in more detail see for example \cite[\textsection 10]{Mi}.  Let  $z_0$ be a parabolic fixed  point of multiplicity $m+1$. On any {simply connected } neighborhood of $z_0$ disjoint from the set of singular values, there is a well defined branch $\phi$ of $f^{-1}$ which fixes $z_0$.  In this setup, there exist $m$ {\em attracting vectors}   such that if an orbit converges to $z_0$ it does so in a direction asymptotically tangent to one of them.  Likewise, there exist $m$ {\em repelling vectors}, defined as attracting vectors for $\phi$.  Attracting and repelling vectors alternate.  An \emph{attracting petal} for $z_0$  is defined as any open simply connected set $W$ containing $z_0$ on its boundary, such that $f(\overline{W})\subset W \cup \{z_0\}$. Likewise, a {\em repelling petal} is an attracting petal for $\phi$. Every attracting (resp.~repelling) petal is associated to an attracting (resp.~repelling) vector. Every virtual fixed point (or immediate attracting basin) attached to $z_0$ is associated to a unique attracting vector and contains all attracting petals also associated to the same vector.

\begin{lem}[\bf Parabolic fixed points in $\partial V$]\label{Enlarging pfp} 
Let $z_0$ be a parabolic fixed point on  $\partial V$, such that no virtual fixed point associated to $z_0$ is contained in $V$. Then, a small arc on $\partial V$ containing $z_0$ can be replaced by an  arc $\zeta$ with the same endpoints such that: 
\begin{itemize}
\item the resulting new region $V' \subset V$  satisfies ${\rm Fixed}(f) \cap V' ={\rm Fixed}(f) \cap V$;
\item $f$ has no fixed points on $\zeta$;
\item $f(\zeta)$ is contained in  $\ov{V'}$.
\end{itemize}
\end{lem}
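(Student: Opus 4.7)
The plan rests on the observation that the two rays of $\partial V$ landing at $z_0$ must approach through the same repelling petal. Attracting and repelling directions at a parabolic point strictly alternate, and rays (being in the Julia set) can only land tangent to repelling directions. If the two rays were tangent to two different repelling directions, the sector of $V$ they bound would contain at least one attracting direction $v$; the associated immediate basin is then a Fatou component which, being connected and disjoint from the fixed-ray graph $\Gamma$, is entirely contained in a single basic region, namely $V$. This would place a virtual fixed point inside $V$, contradicting the hypothesis.

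Next I would shrink $V$ near $z_0$ using repelling Fatou coordinates. Choose a repelling petal $W$ at $z_0$ associated to the common repelling direction, small enough so that $W$ contains no fixed point of $f$ other than $z_0$, meets no fixed ray other than the two bounding $V$, and such that the portions of these two rays sufficiently close to $z_0$ lie in $\overline{W}$. The Fatou coordinate $\beta:W\to\C$ conjugates $f|_W$ to the translation $w\mapsto w-1$ and sends $z_0$ to $+\infty$; in these coordinates each of the two rays is asymptotically horizontal (being invariant under $f$ and accumulating at $z_0$), so the part of $V\cap W$ near $z_0$ corresponds to a curvilinear horizontal half-strip $\Sigma$ heading to $+\infty$.

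For $T$ sufficiently large, define $\zeta$ to be the $\beta$-preimage of the vertical cross-cut $\Sigma\cap\{\Re w=T\}$; it is an arc in $V\cap W$ joining two points $x_1,x_2$ on the bounding rays. Let $U$ be the bounded component of $V\setminus\zeta$ containing the tip of $V$ near $z_0$, and set $V':=V\setminus\overline{U}$, so that $\partial V'$ is obtained from $\partial V$ by replacing the two ray-tips from $x_i$ to $z_0$ together with $z_0$ itself by the single arc $\zeta$. Then $V'\subset V$ and, since the only fixed point of $f$ in $\overline{W}$ is $z_0\notin V$, we have ${\rm Fixed}(f)\cap V'={\rm Fixed}(f)\cap V$ and $\zeta$ contains no fixed points. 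Finally, $f(\zeta)$ corresponds in $\beta$-coordinates to $\Sigma\cap\{\Re w=T-1\}$, which lies in $\Sigma$ on the side of $\zeta$ away from $z_0$, hence in $V'\subset\overline{V'}$.

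The main technical point is the choice of $T$: the rays are only \emph{asymptotically} horizontal in $\beta$-coordinates and may wiggle, so one must take $T$ large enough that each ray meets both vertical lines $\{\Re w=T\}$ and $\{\Re w=T-1\}$ in exactly one point inside $\Sigma$, so that $\zeta$ is an honest cross-cut of $V\cap W$ and $f(\zeta)$ is a parallel cross-cut further from $z_0$. Once this is arranged, all required properties follow directly from the local normal form $w\mapsto w-1$ on the repelling petal.
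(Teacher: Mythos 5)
Your construction is essentially the same as the paper's: both proofs shrink $V$ near $z_0$ by cutting off a small neighborhood with a repelling petal $W$, and set $\zeta$ to be the resulting cross-cut. The paper defines $\zeta$ directly as the portion of $\partial(V\setminus\overline{W})$ not on $\partial V$ (so $\zeta$ lies in $\partial W$ together with whatever pieces of the rays wiggle outside $W$), and verifies $f(\zeta)\subset\overline{V'}$ by treating the $\partial W$-pieces and the ray-pieces separately. You instead pass to repelling Fatou coordinates and take a vertical cross-cut $\{\Re w = T\}$, which is a perfectly valid variant and makes the relation $\beta\circ f=\beta-1$ transparent. You also supply an explicit argument (that the paper only asserts) for why the two boundary rays must approach $z_0$ through the same repelling petal, via the alternation of attracting and repelling directions and the hypothesis that $V$ contains no virtual fixed point attached to $z_0$; this is a genuine (small) improvement in detail.

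One technical point is slightly off. You say one must ``take $T$ large enough that each ray meets the vertical lines $\{\Re w=T\}$ and $\{\Re w=T-1\}$ in exactly one point inside $\Sigma$.'' But the $\beta$-image of the tail of a fixed ray satisfies $\tilde g+1\subset\tilde g$, so it is (asymptotically) invariant under translation by $1$; hence if it crosses one vertical line $\{\Re w=T\}$ more than once, it does so for every sufficiently large $T$, and increasing $T$ does not cure the multiple crossings. The fix is either to select the outermost cross-cut of $\Sigma$ on $\{\Re w=T\}$ (the unique one separating the ``tip'' from the rest), or simply to adopt the paper's definition of $\zeta$ along $\partial W$ and the rays, which sidesteps the issue. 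With that adjustment the argument is complete and matches the paper's.
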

\begin{figure}[htb!]
\centering
\begin{minipage}[b]{0.35\textwidth}
\includegraphics[width= \textwidth]{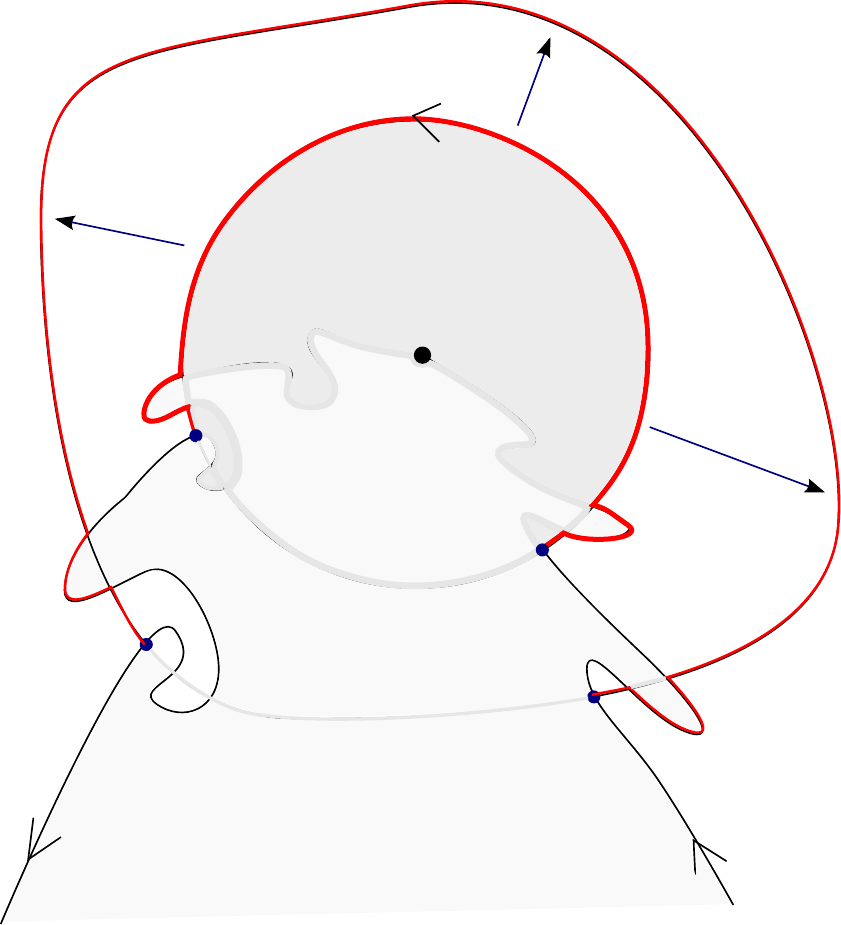}
\setlength{\unitlength}{\textwidth}
\put(-0.12,0.9){$f(\zeta)$}
\put(-0.21,0.7){\large $\zeta$}
\put(-0.5,0.71){$z_0$}
\put(-0.39,0.39){\scriptsize $x_1$}
\put(-0.82,0.5){\scriptsize $x_2$}
\put(-0.52,0.3){\large$V$}
\put(-0.52,0.8){\large$V'$}
\end{minipage}
\hfil
\begin{minipage}[b]{0.35\textwidth}
\includegraphics[width= \textwidth]{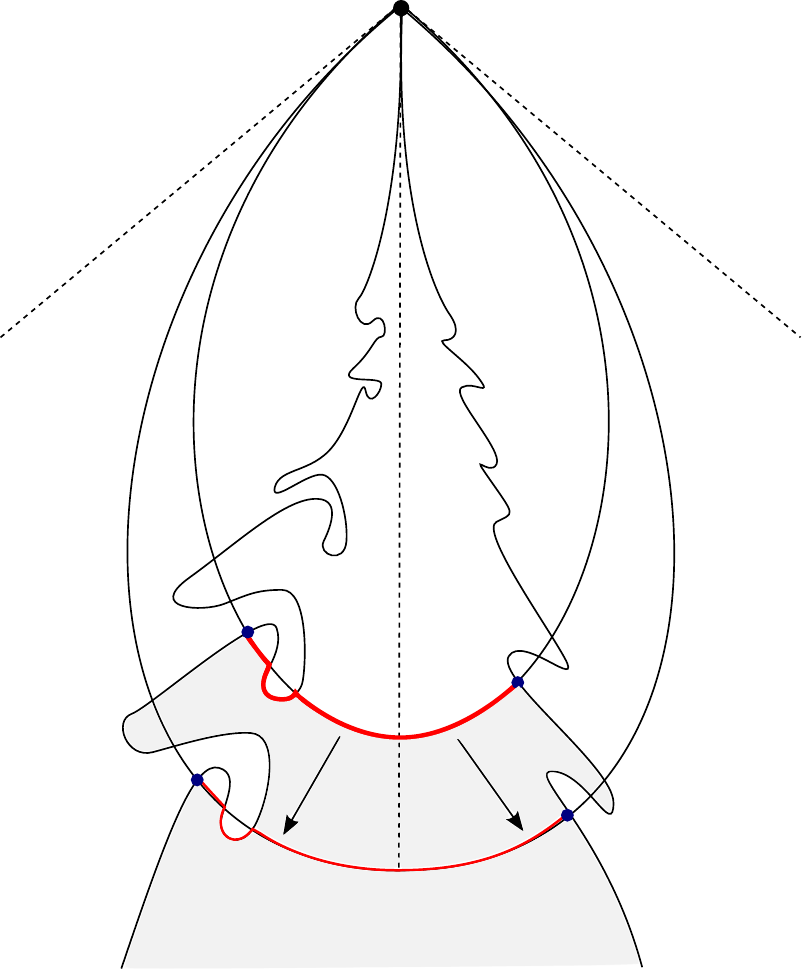}
\setlength{\unitlength}{\textwidth}
\put(-0.18,0.7){$f(W)$}
\put(-0.36,0.72){\large $W$}
\put(-0.51,1.23){$z_0$}
\put(-0.39,0.3){\scriptsize $x_1$}
\put(-0.74,0.35){\scriptsize $x_2$}
\put(-0.52,0.2){\large$V'$}
\put(-0.55,0.33){$\zeta$}
\put(-0.57,0.07){\small $f(\zeta)$}
\end{minipage}
\caption{\small Left: The enlargement of $V$ near a repelling fixed point on its boundary.  The region $V'$ includes the lighter shaded region $V$ as well as the darker shaded parts. Right: The shrinking of $V$ near a parabolic fixed point in its boundary.}
\label{modificationrfp}
\end{figure}

\begin{proof}
Since there is no virtual fixed point associated to $z_0$ in $V$, there exists an  arbitrarily  small repelling petal $W$ attached to $z_0$ which eventually contains both rays landing at $z_0$ on $\partial V$. See Figure \ref{modificationrfp}. Consider the region $V'$ to be the unbounded connected component of $V\setminus \ov{W}$.  We define $\zeta \subset \partial V'$ as the curve starting and ending in $\partial W$, not going through $z_0$, such that $\partial V'\setminus \zeta\subset \partial V$. Equivalently $\zeta$ joins the first intersection points between the rays and  $\partial W$. If the rays do not wiggle outside of $W$ once they enter it,  then $\zeta\subset \partial W$, but otherwise $\zeta$ contains part of the rays. By construction, the new region $V'$  does not contain $z_0$ and $f$ has no fixed points on $\zeta$. Finally,  $f(\zeta)$ is contained in  $\overline{V'}$   by similar arguments to those used in Lemma \ref{Enlarging rfp}.  Again, by shrinking $W$ if necessary, we can make sure we did not remove any fixed point from $V$ by cutting away the bounded components of $V' \setminus W$. 
 \end{proof}

\subsection*{Proof of the Main Theorem}
The setup for the proof of the Main Theorem is very similar to the setup in Section  \ref{Global counting}, so we will use the notation introduced there.   Let $\FF$ be a finite full and complete collection of fundamental domains (see Definition~\ref{Full and complete collections}). Let $P, \Gamma_\alpha, r_\alpha$  be as in the proof of Theorem \ref{Global counting theorem}, and $\hat{U}$ be the the region bounded by $\bigcup(r_\alpha\cup \Gamma_\alpha)$. 
Let $V$ be a basic region for $f$. For simplicity of exposition, we assume that  rays in $\partial V$  intersect the circle $C_R$ in a single point. At the very end of the proof we will indicate how to slightly modify the construction to deal with the case of multiple intersections.

Assume first that there are no virtual fixed points in $V$.   Let  $\Vhat=V'\cap\widehat{U}$, where $V'$ is $V$ after being modified  near the boundary fixed points as in Lemmas \ref{Enlarging rfp} and \ref{Enlarging pfp}. Recall that when modifying the region,  repelling fixed points on $\partial V$ end up inside $V'$, while parabolic fixed points do not.  Without loss of generality,  $R$ is large enough so that all fixed rays associated to the fundamental domains in  $\mathcal{F}\cap V$ have their landing points in $\Vhat$. Let $\GV$ be the boundary of $\Vhat$.
 
The difference with respect to the global counting in Section \ref{Global counting} is that we additionally  have to calculate the contributions to $\ind (f(\GV)-\GV,0)$ made by the (pieces of) fixed ray pairs in $\GV$. 
Observe that, {if $\delta \cap V =\emptyset$},  there is one and exactly one fixed ray pair on $\GV$ separating $\delta$ from $\Vhat$.
Let $\{P_i\}_{i=1}^k$ be the collection of $k$ preimages of $P$ which are contained in $\GV$, labeled so as to respect the order in which they are encountered when moving along $\GV$ counterclockwise. Also, let $s_i$ be the arc along $\GV$ joining two consecutive points $P_i$ and $P_{i+1}$, where indices are taken mod $k$ (see Figure~\ref{J}).

The arcs $s_i$ can be of {five}  types, {two of which are mutually exclusive}:

\begin{enumerate}
\item[(0)] $s_i$ intersects the ray pair separating $\delta$ from $\Vhat$ {(only present if $\delta\cap V=\emptyset$)};
\item[(0')] $s_i$ is the unique $\Gamma_\alpha$ intersecting $\delta$ (only present if $\delta\cap V\neq \emptyset$);
\item[(1)]  $s_i$ intersects any other  ray pair; 
\item[(2)]  $s_i\subset r_\alpha$  for some $r_\alpha$;
\item[(3)]  $s_i= \Gamma_\alpha$ for some $\alpha$ such that $\Gamma_\alpha\cap\delta=\emptyset$.
\end{enumerate} 

\begin{figure}[hbt!]
\begin{center}
\def\svgwidth{0.5\textwidth}
\begingroup%
  \makeatletter%
  \providecommand\color[2][]{%
    \errmessage{(Inkscape) Color is used for the text in Inkscape, but the package 'color.sty' is not loaded}%
    \renewcommand\color[2][]{}%
  }%
  \providecommand\transparent[1]{%
    \errmessage{(Inkscape) Transparency is used (non-zero) for the text in Inkscape, but the package 'transparent.sty' is not loaded}%
    \renewcommand\transparent[1]{}%
  }%
  \providecommand\rotatebox[2]{#2}%
  \ifx\svgwidth\undefined%
    \setlength{\unitlength}{412bp}%
    \ifx\svgscale\undefined%
      \relax%
    \else%
      \setlength{\unitlength}{\unitlength * \real{\svgscale}}%
    \fi%
  \else%
    \setlength{\unitlength}{\svgwidth}%
  \fi%
  \global\let\svgwidth\undefined%
  \global\let\svgscale\undefined%
  \makeatother%
  \begin{picture}(1,0.93592233)%
    \put(0,0){\includegraphics[width=\unitlength]{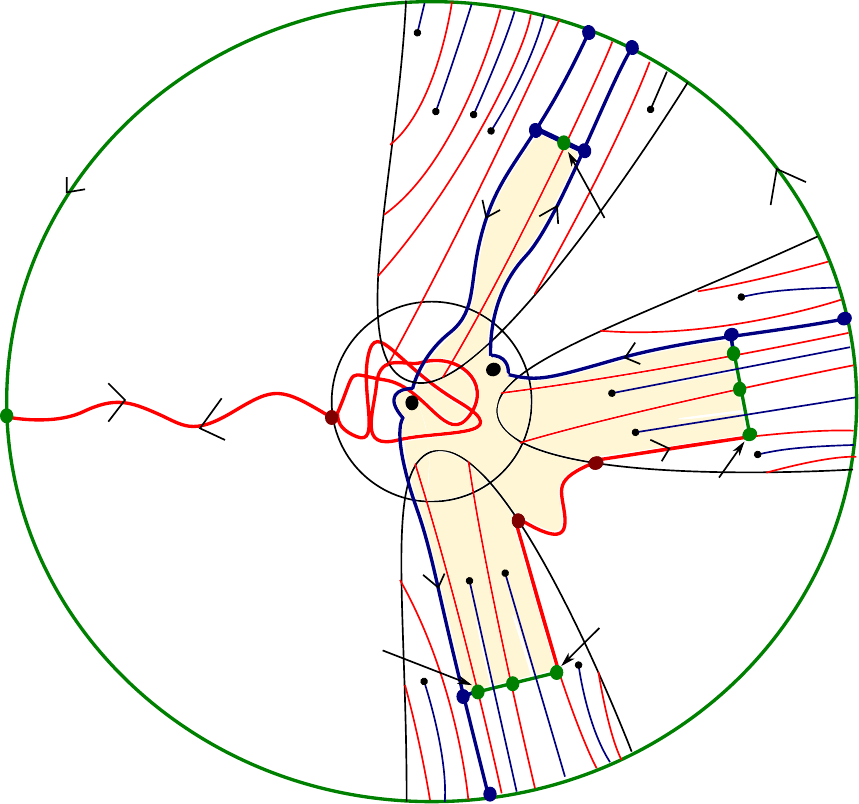}}%
    \put(0.41243671,0.19244847){\color[rgb]{0,0,0}\makebox(0,0)[lb]{\smash{\ss $P_1$}}}%
    \put(0.70072069,0.19959598){\color[rgb]{0,0,0}\makebox(0,0)[lb]{\smash{\ss $P_3$}}}%
    \put(0.82222883,0.35565055){\color[rgb]{0,0,0}\makebox(0,0)[lb]{\smash{\ss $P_4$}}}%
    \put(0.8770266,0.47477624){\color[rgb]{0,0,0}\makebox(0,0)[lb]{\smash{\ss $P_5$}}}%
    \put(0.70191194,0.66299468){\color[rgb]{0,0,0}\makebox(0,0)[lb]{\smash{\ss $P_7$}}}%
    \put(0.24804336,0.90124595){\color[rgb]{0,0,0}\makebox(0,0)[lb]{\smash{$C_R$}}}%
    \put(0.17299422,0.47596744){\color[rgb]{0,0,0}\makebox(0,0)[lb]{\smash{$\delta_P$}}}%
    \put(0.01932217,0.46643742){\color[rgb]{0,0,0}\makebox(0,0)[lb]{\smash{$P$}}}%
    \put(0.60184643,0.36875439){\color[rgb]{0,0,0}\makebox(0,0)[lb]{\smash{$\widehat{V}$}}}%
  \end{picture}%
\endgroup%
\end{center}
\caption{\small A simple example of counting for the modified basic region $\Vhat$ shown (shaded) in the picture. In this example,  $\delta\cap V=\emptyset$}. The original region $V$ had two repelling fixed points on its boundary which are now part of $\Vhat$.   Arcs of type $(0)$ and $(1)$ are colored in blue; arcs of type $(2)$ are colored in green; arcs of type $(3)$ are in red. Each arc has the same color as its image. In this example $\ind(f(\Gamma_{\Vhat})- \Gamma_{\Vhat})=7$.
\label{J}
\end{figure}

We only have to compute  $\ind(f(s_i)-s_i,0)$ when $s_i$ is of type $(0)$ and $(1)$, as the other three cases are already covered in the previous section. We calculate this in two separate claims.  
For each $s_i$ of type $(0)$ and $(1)$, let $n_i=1$ if the ray pair originally landed at a repelling fixed point, and $n_i=0$ if the ray pair originally landed at a parabolic fixed point.
\begin{claim}
If $s_i$ is of type  $(0)$, the  counting  gives  $\ind(f(s_i)-s_i,0)=1+n_i+\ind(s_i,P)$.
\end{claim}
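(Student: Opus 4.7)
The arc $s_i$ of type $(0)$ has both endpoints $P_i,P_{i+1}$ mapped to the single point $P$ by $f$, so $\sigma:=f(s_i)$ is a closed loop based at $P$ while $\gamma:=s_i$ is an arc. The plan is to apply the Second Homotopy Lemma (Lemma~\ref{Third homotopy lemma}, part~(b)) to the pair $(\gamma,\sigma)$ and then identify the winding integer $N$ geometrically.

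First I would verify the hypotheses of Lemma~\ref{Third homotopy lemma}(b). We have $s_i(t)\neq f(s_i(t))$ for all $t$, because $s_i$ avoids every fixed point: the landing point $z_0$ of the ray pair has been removed/enclosed by the modification $\zeta$ constructed in Lemma~\ref{Enlarging rfp} or Lemma~\ref{Enlarging pfp}, and the ray parts carry no fixed points by the Snail Lemma combined with the expansion of $f$ along a fixed ray. The base point $P$ is not on $s_i$: in type~$(0)$ the ray pair contained in $s_i$ separates $\delta$ (and hence $P$) from $\Vhat$, while $s_i\subset\partial\Vhat$. Finally, $s_i$ lies in the closure of a single component $U$ of $\C\setminus f(s_i)$: the loop $f(s_i)$ consists, in order, of an arc of $C_R$ starting at $P$ (the image of an initial $r_\alpha$-portion of $s_i$), a piece of one fixed ray of the pair, the arc $f(\zeta)$, a piece of the other ray, and a final $C_R$-arc returning to $P$, so it bounds two topological regions, one of which contains all of $s_i$; condition~(iii) can be ensured by a tiny perturbation of $\zeta$ if necessary.

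The Second Homotopy Lemma then yields
\[
\ind(f(s_i)-s_i,0)=\ind(s_i,P)+N,\qquad N=\ind(f(s_i),z)\text{ for any }z\in s_i\setminus f(s_i).
\]
It remains to compute $N$. I would decompose the winding of $f(s_i)$ around a test point $z\in s_i$ into two independent contributions. First, because in type~$(0)$ the ray pair separates $P$ from $\Vhat$, the two $C_R$-arcs inside $f(s_i)$ must travel around $P$ before reaching and returning from the rays; this forces one full turn around $z$ and produces the $+1$. Second, the detour $f(\zeta)$ contributes $n_i$: if $z_0$ is repelling then Lemma~\ref{Enlarging rfp} places $f(\zeta)\subset \C\setminus V'$, so $f(\zeta)$ wraps once around $z_0$ from outside the disc $D_\epsilon$ and adds one winding ($n_i=1$); if $z_0$ is parabolic then Lemma~\ref{Enlarging pfp} places $f(\zeta)\subset \overline{V'}$ on the repelling-petal side and adds none ($n_i=0$). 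Thus $N=1+n_i$ and the formula follows.

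The main obstacle will be the topological bookkeeping of the loop $f(s_i)$: one must show that the concatenation described above really bounds a single region containing all of $s_i$ (rather than splitting $s_i$ across several components of $\C\setminus f(s_i)$) and that the two winding contributions add cleanly without interference. A particular subtlety is that $s_i$ may contain pieces of $r_\alpha$ whose images cover $C_R$ several times, a phenomenon already encountered in the proof of Theorem~\ref{Global counting theorem}; the careful accounting of those sheets, together with the verification that no intermediate subtraction curve acquires a zero, is where the argument is most delicate.
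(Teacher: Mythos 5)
Your plan — apply the Second Homotopy Lemma to the pair $(s_i, f(s_i))$ and then compute the winding integer $N$ — is the same as the paper's. But the topological picture you assume is wrong in a way that breaks the application of that lemma.

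You assert that $f(s_i)$ "bounds two topological regions, one of which contains all of $s_i$." Neither half of this is true for a type~(0) arc. Because the ray pair in $s_i$ separates $P$ from $\Vhat$, the two $C_R$-arcs in $f(s_i)$ overlap when traversed counterclockwise, and the concatenation $C_R$-arc / $g_1$ / $f(\zeta)$ / $g_2$ / $C_R$-arc divides the plane into \emph{three} connected components: an unbounded one with index~$0$, a bounded component $U_1$ with $P$ on its boundary and index~$1$, and a bounded component $U_2$ containing $P_i,P_{i+1}$ with index~$2$. In the repelling subcase ($n_i=1$) Lemma~\ref{Enlarging rfp} guarantees $f(\zeta)\subset\C\setminus\Vhat$, so $s_i\subset\ov{U_2}$ and the Second Homotopy Lemma applies directly with $N=2$, giving $2+\ind(s_i,P)=1+n_i+\ind(s_i,P)$. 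In the parabolic subcase ($n_i=0$), however, Lemma~\ref{Enlarging pfp} puts $f(\zeta)\subset\ov{\Vhat}$, and then $s_i$ \emph{straddles two components}: the ray pieces and $\zeta$ lie in $\ov{U_1}$, while the two end-arcs $\xi_1,\xi_2$ from $P_i,P_{i+1}$ to the rays lie in $U_2$. The hypothesis (ii) of the Second Homotopy Lemma fails here, and a "tiny perturbation of $\zeta$" does not fix this — it is a genuine feature of the configuration.

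The paper closes this gap by constructing an auxiliary curve $\sigmah$, homotopic to $\sigma=f(s_i)$ in a way compatible with the First Homotopy Lemma (sliding $\sigma$ locally along $\xi_1$ and $\xi_2$ so it no longer crosses them), after which $s_i$ lies entirely in $\ov{\hat U_1}$ and the Second Homotopy Lemma yields $N=1$. Your heuristic that "the detour $f(\zeta)$ contributes $n_i$ because it wraps once around $z_0$" also misidentifies the mechanism: $f(\zeta)$ does not wind around $z_0$; what matters is whether $f(\zeta)$ lies outside $\Vhat$ (repelling case, pushing $s_i$ into the index-$2$ component $U_2$) or inside $\ov{\Vhat}$ (parabolic case). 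So while the final count $N=1+n_i$ is numerically right, the reasoning you give would not survive being written out, and the parabolic case in particular requires the extra homotopy step you have omitted.
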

\begin{proof}
Let $s_i$ be the arc of type $(0)$, $g_1$ and $g_2$ be  respectively the first and the second ray in $s_i$ that we encounter when moving  along $\Gamma_{\Vhat}$ counterclockwise. Let $\zeta$ be as in Lemma \ref{Enlarging rfp} or \ref{Enlarging pfp} depending on whether the original fixed point  was repelling ($n_i=1$) or parabolic ($n_i=0$). Then $f(s_i)$ starts at $P$, goes along $C_R$ counterclockwise  until meeting $g_1$, continues on $g_1$ until hitting $\zeta$, then along $f(\zeta)$, then moves back along the ray $g_2$ and continues along $C_R$ until it meets $P$ again.  See Figure \ref{indexcase0}.Ê Note that  $f(s_i)$  divides the plane into exactly three connected components. Denote by $U_1$ the bounded component with $P$ on the boundary and by $U_2$ the bounded component which contains the points $P_i$ and $P_{i+1}$.  Observe that $\ind(f(s_i),z)=1$ if $z\in U_1$ and equals 2 if $z\in U_2$. 

Let us first suppose that $n_i=1$. Then by Lemma~\ref{Enlarging rfp} $f(\zeta)$, and hence $f(s_i)$, is contained in $\C\setminus \Vhat$, which is closed. Hence  $s_i$, and in fact the whole $\Vhat$,  is contained in $\overline{U_2}$, so that $\ind(f(s_i),z)=2$ for all $z\in s_i\setminus f(s_i)$. By the Second Homotopy Lemma~\ref{Third homotopy lemma} part (b), with $\gamma=s_i$ and $\sigma=f(s_i)$ we obtain that $\ind(f(s_i)-s_i,0)=2+ \ind(s_i,P)$. 

Suppose now that $n_i=0$.  Then, by Lemma~\ref{Enlarging pfp}, $f(\zeta)$ is contained in the closure of $\Vhat$. This is a problem when trying to apply  Lemma~\ref{Third homotopy lemma} since there is a part of $s_i$ contained in $\overline{U_1}$, namely the rays and the arc $\zeta$, while the remaining part  belongs to $U_2$, precisely two arcs say $\xi_1$ going from the endpoint $P_i$ to the ray $g_1$, and $\xi_2$  going from the endpoint $P_{i+1}$ to the ray $g_2$. See Figure \ref{indexcase0}.  Let $\gamma=s_i$ parametrized by $[0,1]$ and $f(s_i)$ be $\sigma(t):=f(\gamma(t))$.
  
We claim that there exists $\sigmah$, a  local modification of $\sigma$ such that $\sigma \sim \sigmah$ relative to {$\{0,1\}$ (and in fact relative to a larger set)}, with a homotopy satisfying the conditions of the First Homotopy Lemma (so that $\ind(\sigma-\gamma,0)=\ind(\sigmah-\gamma,0)$), and such that $\sigmah$ and $\gamma$ do satisfy the hypothesis of the Second Homotopy Lemma with $N=1$. The curve  $\sigmah$ is constructed  as follows. Let $t_1$ be the smallest parameter such that $\gamma(t_1) \in g_1$ and let $t_1'>t_1$ be such that $\sigma(t_1')=\gamma(t_1)$. Fix $\epsilon>0$ arbitrarily small and consider the arc $\sigma^1=\sigma(t_1'-\epsilon,t_1'+\epsilon)$. 
Now consider a homotopy $\sigma^1_s$  for $s\in [0,1]$ relative to {$\{0,1\}$} which progressively deforms the curve sliding it along  $\xi_1=\gamma[0,t_1]$ so that the end curve $\sigma^1_1$ does not intersect $\xi_1$ at any point.  Observe that $\sigma[0,t_1]$ is very far away from $\sigma^1_s$ for all $s$, so that $\sigma_s^1(t)-\gamma(t)\neq 0$ for any $s,t$.  Analogously, define a homotopy $\sigma^2_s$ in a neighborhood of $\xi_2$ which  slides $\sigma$ along $\xi_2$. Finally define $\sigmah(t):=\sigma_1^j(t)$ if $j=1,2$ and $t\in [t_j'-\epsilon,t_j'+\epsilon]$ and $\sigmah(t)=\sigma(t)$ otherwise. If we denote by $\widehat{U}_1$ and $\widehat{U}_2$ the bounded connected components of $\C\setminus \sigmah$  (which are local modifications of $U_1$ and $U_2$ respectively), we see that $\gamma$ {is contained in} the closure of $\widehat{U}_1$, so that the hypotheses of the Second Homotopy Lemma are satisfied with $N=1$. 
\end{proof}

\begin{figure}[hbt!]
\begin{center}
\def\svgwidth{\textwidth}
\begingroup%
  \makeatletter%
  \providecommand\color[2][]{%
    \errmessage{(Inkscape) Color is used for the text in Inkscape, but the package 'color.sty' is not loaded}%
    \renewcommand\color[2][]{}%
  }%
  \providecommand\transparent[1]{%
    \renewcommand\transparent[1]{}%
  }%
  \providecommand\rotatebox[2]{#2}%
  \ifx\svgwidth\undefined%
    \setlength{\unitlength}{956.925bp}%
    \ifx\svgscale\undefined%
      \relax%
    \else%
      \setlength{\unitlength}{\unitlength * \real{\svgscale}}%
    \fi%
  \else%
    \setlength{\unitlength}{\svgwidth}%
  \fi%
  \global\let\svgwidth\undefined%
  \global\let\svgscale\undefined%
  \makeatother%
  \begin{picture}(1,0.49188808)%
    \put(0,0){\includegraphics[width=\unitlength]{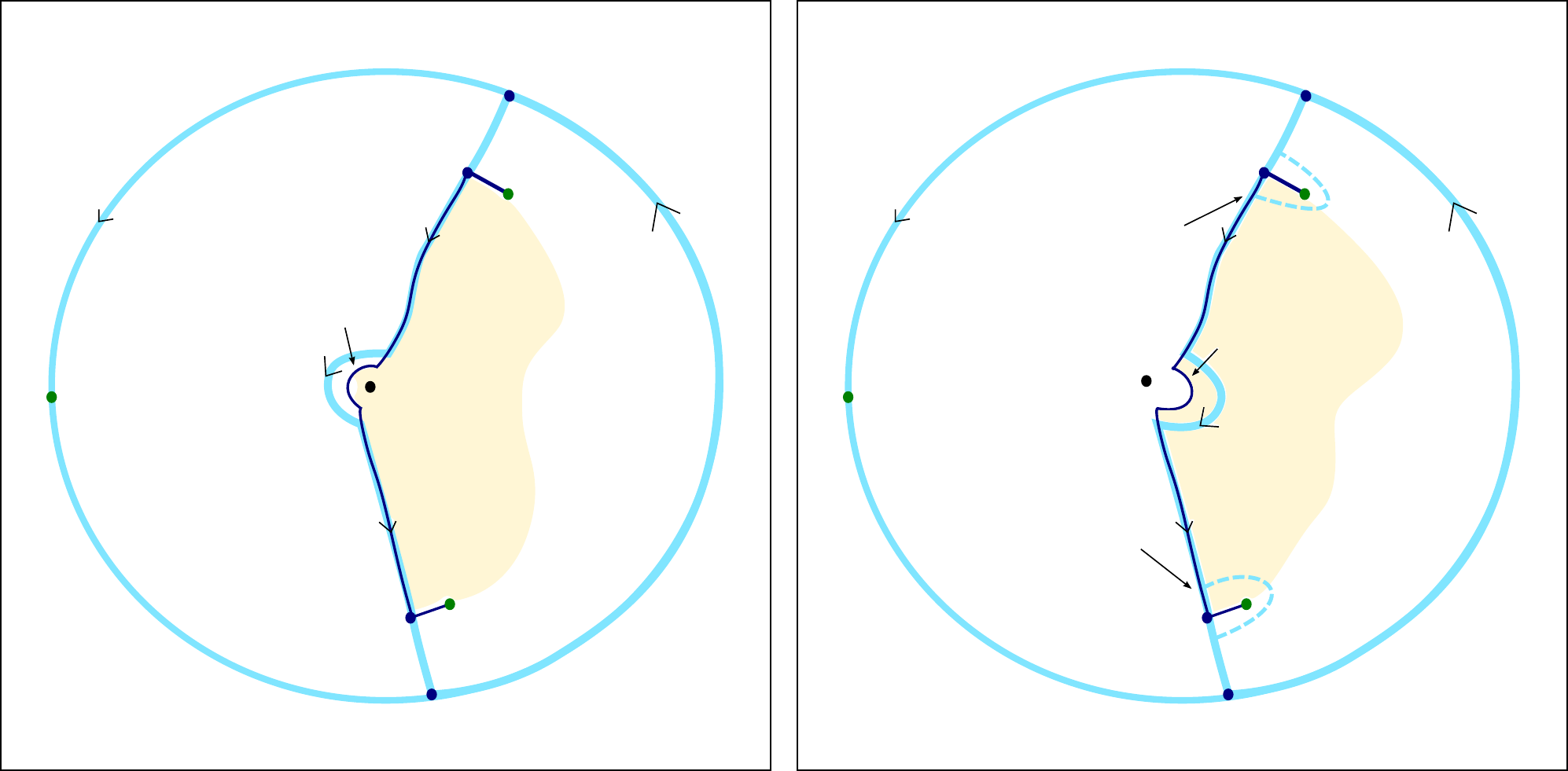}}%
    \put(0.127653,0.43501878){\color[rgb]{0,0,0}\makebox(0,0)[lb]{\smash{$C_R$}}}%
    \put(0.03988165,0.23603855){\color[rgb]{0,0,0}\makebox(0,0)[lb]{\smash{$P$}}}%
    \put(0.1301945,0.31918459){\color[rgb]{0,0,0}\makebox(0,0)[lb]{\smash{$U_1$}}}%
    \put(0.3491297,0.2363022){\color[rgb]{0,0,0}\makebox(0,0)[lb]{\smash{$U_2$}}}%
    \put(0.38925077,0.39697162){\color[rgb]{0,0,0}\makebox(0,0)[lb]{\smash{$f(s_i)$}}}%
    \put(0.280619,0.09137683){\color[rgb]{0,0,0}\makebox(0,0)[lb]{\smash{\ss $P_{i+1}$}}}%
    \put(0.3299988,0.36318009){\color[rgb]{0,0,0}\makebox(0,0)[lb]{\smash{\ss $P_i$}}}%
    \put(0.27234112,0.31274982){\color[rgb]{0,0,0}\makebox(0,0)[lb]{\smash{\ss $g_1$}}}%
    \put(0.25667195,0.16781001){\color[rgb]{0,0,0}\makebox(0,0)[lb]{\smash{\ss $g_2$}}}%
    \put(0.1639675,0.24124071){\color[rgb]{0,0,0}\makebox(0,0)[lb]{\smash{\ss $f(\zeta$)}}}%
    \put(0.20422499,0.28646303){\color[rgb]{0,0,0}\makebox(0,0)[lb]{\smash{\ss $\zeta$}}}%
    \put(0.24088334,0.24174893){\color[rgb]{0,0,0}\makebox(0,0)[lb]{\smash{\sc $z_0$}}}%
    \put(0.63575298,0.43501878){\color[rgb]{0,0,0}\makebox(0,0)[lb]{\smash{$C_R$}}}%
    \put(0.54798165,0.23603855){\color[rgb]{0,0,0}\makebox(0,0)[lb]{\smash{$P$}}}%
    \put(0.60032012,0.2725078){\color[rgb]{0,0,0}\makebox(0,0)[lb]{\smash{$U_1$}}}%
    \put(0.86447832,0.22857032){\color[rgb]{0,0,0}\makebox(0,0)[lb]{\smash{$U_2$}}}%
    \put(0.89735075,0.39697162){\color[rgb]{0,0,0}\makebox(0,0)[lb]{\smash{$\sigma=f(s_i)$}}}%
    \put(0.78044108,0.31274982){\color[rgb]{0,0,0}\makebox(0,0)[lb]{\smash{\ss $g_1$}}}%
    \put(0.7647719,0.16781001){\color[rgb]{0,0,0}\makebox(0,0)[lb]{\smash{\ss $g_2$}}}%
    \put(0.7807403,0.24355719){\color[rgb]{0,0,0}\makebox(0,0)[lb]{\smash{\ss $f(\zeta$)}}}%
    \put(0.77911216,0.26825182){\color[rgb]{0,0,0}\makebox(0,0)[lb]{\smash{\ss $\zeta$}}}%
    \put(0.70266214,0.24583056){\color[rgb]{0,0,0}\makebox(0,0)[lb]{\smash{\sc $z_0$}}}%
    \put(0.8139371,0.10777666){\color[rgb]{0,0,0}\makebox(0,0)[lb]{\smash{\ss$\sigma_1^2$}}}%
    \put(0.83942039,0.38256755){\color[rgb]{0,0,0}\makebox(0,0)[lb]{\smash{\ss$\sigma_1^1$}}}%
    \put(0.69519045,0.38038215){\color[rgb]{0,0,0}\makebox(0,0)[lb]{\smash{\ss $\gamma(t_1)=\sigma(t_1')$}}}%
    \put(0.68082661,0.34387259){\color[rgb]{0,0,0}\makebox(0,0)[lb]{\smash{\ss$\sigma(t_1'+\epsilon)$}}}%
    \put(0.65563381,0.09478343){\color[rgb]{0,0,0}\makebox(0,0)[lb]{\smash{\ss $\gamma(t_2)=\sigma(t_2')$}}}%
    \put(0.64840688,0.13976047){\color[rgb]{0,0,0}\makebox(0,0)[lb]{\smash{\ss$\sigma(t_1'+\epsilon)$}}}%
    \put(0.32782548,0.28453085){\color[rgb]{0,0,0}\makebox(0,0)[lb]{\smash{$\Vhat$}}}%
    \put(0.8502671,0.30017945){\color[rgb]{0,0,0}\makebox(0,0)[lb]{\smash{$\Vhat$}}}%
  \end{picture}%
\endgroup%
\end{center}
\caption{\small The computation of the index for an arc of type (0). Left: A repelling fixed point was previously in $\partial V$ and now belongs to $\Vhat$. The arc $s_i$ is shown in blue and belongs to $\ov{U}_2$. Its image is shown in light blue. Right:   A parabolic fixed point was previously in $\partial V$ and now is not in $\Vhat$. The arc $\gamma= s_i$ is shown in blue while its image $\sigma=f(s_i)$ is shown in light blue. The dashed curves show the local modification of $\sigma$ so that $\gamma$ {is contained in} the closure of $\hat{U}_1$.}
\label{indexcase0}
\end{figure}

\begin{claim}
If $s_i$ is of type $(1)$, the  counting  gives   $\ind(f(s_i)-s_i,0)=n_i+\ind(s_i,P)$.
\end{claim}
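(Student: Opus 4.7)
My plan is to follow the template of the proof just given for type $(0)$ arcs, exploiting the single structural simplification that defines type~$(1)$: since the ray pair contained in $s_i$ does not separate $\delta$ from $\Vhat$, the image $f(s_i)$ does not make a full revolution along $C_R$, and the resulting winding count drops by exactly one. First I would set up notation as before: let $g_1, g_2$ be the two rays of the pair in the order in which they are traversed along $\GV$ counterclockwise, let $z_0$ be their common landing point, and let $\zeta$ be the replacement arc furnished by Lemma~\ref{Enlarging rfp} if $z_0$ is repelling ($n_i=1$) or by Lemma~\ref{Enlarging pfp} if $z_0$ is parabolic ($n_i=0$). I would then describe $f(s_i)$ piece by piece as a closed curve at $P$: it starts along a (possibly empty) sub-arc of $C_R\cup \delta_P$, image of the $\partial \widehat U$-portion of $s_i$ from $P_i$ to the first crossing with $g_1$; continues inward along $g_1$ (which is $f$-invariant); crosses through $f(\zeta)$; returns outward along $g_2$; and closes back to $P$ along the image of the remaining $\partial \widehat U$-portion of $s_i$.

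The crucial topological step is to identify the connected component $U$ of $\C\setminus f(s_i)$ containing (the main body of) $s_i$, and to compute $N = \ind(f(s_i), z)$ for $z \in U$. This is precisely where type~$(1)$ differs from type~$(0)$: because $\delta$ does not lie between $g_1$ and $g_2$ in the cyclic order at infinity, the two $C_R$-subarcs traversed by $f(s_i)$ near its endpoints can be concatenated without producing a loop around the origin. Consequently $f(s_i)$ encloses at most $z_0$ itself, giving $N = n_i$, namely $1$ in the repelling case and $0$ in the parabolic case.

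Finally I would apply the Second Homotopy Lemma~\ref{Third homotopy lemma}(b) with this value of $N$. In the repelling case, Lemma~\ref{Enlarging rfp} ensures $f(\zeta) \subset \C\setminus V'$, so $f(s_i) \cap s_i = \emptyset$, and the Second Homotopy Lemma applies directly to give $\ind(f(s_i)-s_i,0) = 1 + \ind(s_i, P)$. The hard part will be the parabolic subcase, in which Lemma~\ref{Enlarging pfp} gives $f(\zeta) \subset \ov{V'}$: small portions of $s_i$ near $P_i$ and $P_{i+1}$ then lie in a different component of $\C\setminus f(s_i)$ than the bulk of $s_i$, obstructing the direct application of the Second Homotopy Lemma. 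I would repeat the local-modification trick from the type $(0)$ argument: perform an in-place homotopy of $f(s_i)$ in small neighborhoods of its first crossings with $g_1$ and $g_2$, sliding those portions along $g_1$ and $g_2$ so as to remove the extra intersections with $s_i$. Since the homotopy occurs away from the remainder of $f(s_i)$, no zero of the subtraction curve is produced, and the First Homotopy Lemma~\ref{First Homotopy Lemma} preserves the index, yielding $\ind(f(s_i)-s_i,0) = 0 + \ind(s_i, P)$. In either case we obtain $n_i + \ind(s_i, P)$, as required.
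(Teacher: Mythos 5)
Your proposal follows the paper's argument closely: decompose $f(s_i)$ piecewise, observe that unlike the type $(0)$ case it is a Jordan curve (no extra loop around $P$, precisely because the ray pair does not separate $\delta$ from $\Vhat$), identify the component of $\C\setminus f(s_i)$ whose closure contains $s_i$ to get $N=n_i$, and then apply the Second Homotopy Lemma, with the same local-modification trick in the parabolic subcase. All of that matches the paper.

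However, one assertion is inaccurate and would need to be repaired. In the repelling subcase you write that $f(\zeta)\subset\C\setminus V'$ implies $f(s_i)\cap s_i=\emptyset$ and apply the Second Homotopy Lemma ``directly'' (i.e.~part (a)). This disjointness does not hold. Both $s_i$ and $f(s_i)$ traverse the fixed rays $g_1,g_2$: writing $g_1$ in terms of potential, $s_i$ runs along $g_1$ from roughly the preimage of $C_R$ down to $\zeta$ (near $z_0$), while $f(s_i)$ runs along $g_1$ from $C_R$ down to $f(\zeta)$, and since $f(\zeta)$ is still close to $z_0$ for $D_\epsilon$ small, these two sub-arcs of $g_1$ overlap on an interval of potentials. (The conclusion $f(\zeta)\subset\C\setminus V'$ in Lemma~\ref{Enlarging rfp} allows $f(\zeta)\subset\partial V'$, and indeed the parts of $\zeta$ on the rays map to the rays.) The paper handles this by noting $s_i\subset\overline{U_1}$ and invoking part (b) of the Second Homotopy Lemma (just as in the type $(0)$ repelling case), which tolerates intersections with the boundary $\sigma$ of the component $U$. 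The value $N=1$ and the resulting index are unchanged, so the fix is just to replace ``disjoint, apply part (a)'' with ``$s_i\subset\overline{U_1}$, apply part (b)''.
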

\begin{proof} Let $g_1$ and $g_2$ be  respectively the first and the second ray in $s_i$ that we encounter when moving  along $\Gamma_{\Vhat}$ counterclockwise. Let $\zeta$ be as in Lemma \ref{Enlarging rfp} or \ref{Enlarging pfp} depending on whether the original fixed point  was repelling ($n_i=1$) or parabolic ($n_i=0$). Then $f(s_i)$ starts at $P$, goes along $C_R$ until meeting $g_1$, continues on $g_1$ until hitting $\zeta$, then along $f(\zeta)$, then moves back along the ray $g_2$ and continues along $C_R$ until meeting $P$ again.  See Figure \ref{indexcase1}.Ê Note that  in this case $f(s_i)$ is a Jordan curve which therefore  divides the plane into exactly two connected components. Denote by $U_1$ the bounded component of $\C\setminus f(s_i)$ and by $U_0$ the unbounded one. Then  $\ind(f(s_i),z)=j$ if $z\in U_j$ for $j=0,1$. 

As in the claim above we first suppose that $n_i=1$ (see Figure \ref{indexcase1}). Then $s_i \subset U_1$ and by the Second Homotopy Lemma with $N=1$ we have $\ind(f(s_i)-s_i,0)=1+\ind(s_i,P)$.

Otherwise, if $n_i=0$, we encounter the same difficulty as above: namely, part of $s_i$ is in $U_0$ while the remaining part is in $U_1$. We proceed analogously, by constructing a homotopy between $\sigma:=f(s_i)$ and $\sigmah$ in such a way that $s_i$ is in the closure of $\hat{U}_0$ and hence $\ind(f(s_i)-s_i,0)=\ind(s_i,P)$ by the Second Homotopy Lemma with $N=0$.
\end{proof}

\begin{figure}[hbt!]
\begin{center}
\def\svgwidth{0.9\textwidth}
\begingroup%
  \makeatletter%
  \providecommand\color[2][]{%
    \errmessage{(Inkscape) Color is used for the text in Inkscape, but the package 'color.sty' is not loaded}%
    \renewcommand\color[2][]{}%
  }%
  \providecommand\transparent[1]{%
    \errmessage{(Inkscape) Transparency is used (non-zero) for the text in Inkscape, but the package 'transparent.sty' is not loaded}%
    \renewcommand\transparent[1]{}%
  }%
  \providecommand\rotatebox[2]{#2}%
  \ifx\svgwidth\undefined%
    \setlength{\unitlength}{939.77716bp}%
    \ifx\svgscale\undefined%
      \relax%
    \else%
      \setlength{\unitlength}{\unitlength * \real{\svgscale}}%
    \fi%
  \else%
    \setlength{\unitlength}{\svgwidth}%
  \fi%
  \global\let\svgwidth\undefined%
  \global\let\svgscale\undefined%
  \makeatother%
  \begin{picture}(1,0.50086342)%
    \put(0,0){\includegraphics[width=\unitlength]{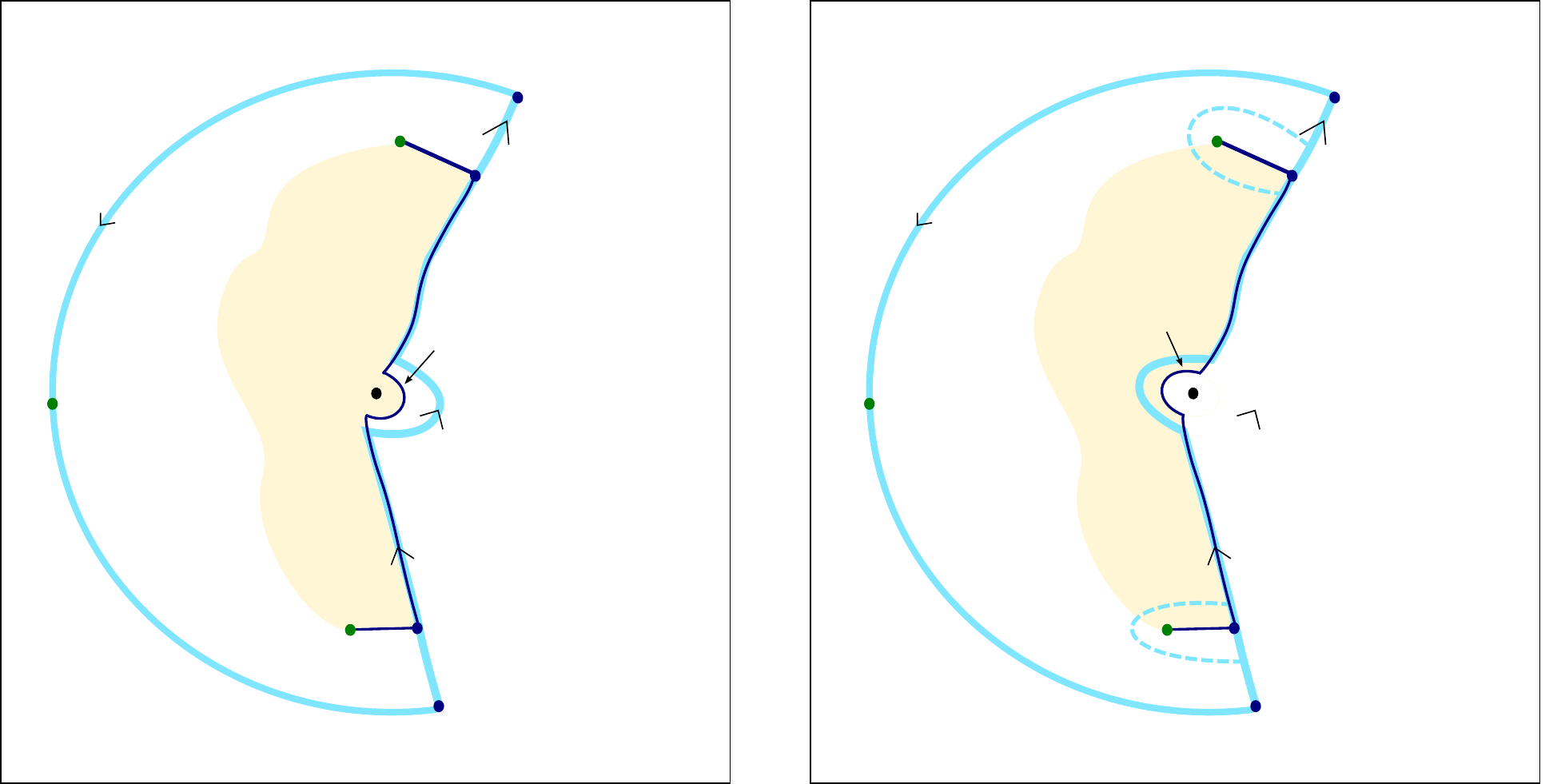}}%
    \put(0.13338731,0.44295644){\color[rgb]{0,0,0}\makebox(0,0)[lb]{\smash{$C_R$}}}%
    \put(0.04060936,0.24034548){\color[rgb]{0,0,0}\makebox(0,0)[lb]{\smash{$P$}}}%
    \put(0.06894015,0.31855969){\color[rgb]{0,0,0}\makebox(0,0)[lb]{\smash{$U_1$}}}%
    \put(0.35550017,0.24061394){\color[rgb]{0,0,0}\makebox(0,0)[lb]{\smash{$U_0$}}}%
    \put(0.23189281,0.46345896){\color[rgb]{0,0,0}\makebox(0,0)[lb]{\smash{$f(s_i)$}}}%
    \put(0.24401472,0.41881918){\color[rgb]{0,0,0}\makebox(0,0)[lb]{\smash{\ss $P_{i+1}$}}}%
    \put(0.27731044,0.31845648){\color[rgb]{0,0,0}\makebox(0,0)[lb]{\smash{\ss $g_2$}}}%
    \put(0.26135537,0.17087199){\color[rgb]{0,0,0}\makebox(0,0)[lb]{\smash{\ss $g_1$}}}%
    \put(0.26879253,0.21443015){\color[rgb]{0,0,0}\makebox(0,0)[lb]{\smash{\ss $f(\zeta$)}}}%
    \put(0.28229289,0.274222){\color[rgb]{0,0,0}\makebox(0,0)[lb]{\smash{\ss $\zeta$}}}%
    \put(0.73333703,0.42561364){\color[rgb]{0,0,0}\makebox(0,0)[lb]{\smash{\ss$\sigma_1^2$}}}%
    \put(0.68382178,0.09841803){\color[rgb]{0,0,0}\makebox(0,0)[lb]{\smash{\ss$\sigma_1^1$}}}%
    \put(0.20361344,0.10754435){\color[rgb]{0,0,0}\makebox(0,0)[lb]{\smash{\ss $P_i$}}}%
    \put(0.19486445,0.24691011){\color[rgb]{0,0,0}\makebox(0,0)[lb]{\smash{\sc $z_0$}}}%
    \put(0.17120946,0.33093022){\color[rgb]{0,0,0}\makebox(0,0)[lb]{\smash{$\Vhat$}}}%
    \put(0.65539525,0.44295644){\color[rgb]{0,0,0}\makebox(0,0)[lb]{\smash{$C_R$}}}%
    \put(0.56261727,0.24034548){\color[rgb]{0,0,0}\makebox(0,0)[lb]{\smash{$P$}}}%
    \put(0.59094809,0.31855969){\color[rgb]{0,0,0}\makebox(0,0)[lb]{\smash{$U_1$}}}%
    \put(0.87750809,0.24061394){\color[rgb]{0,0,0}\makebox(0,0)[lb]{\smash{$U_0$}}}%
    \put(0.75390072,0.46345896){\color[rgb]{0,0,0}\makebox(0,0)[lb]{\smash{$\sigma=f(s_i)$}}}%
    \put(0.79931837,0.31845648){\color[rgb]{0,0,0}\makebox(0,0)[lb]{\smash{\ss $g_2$}}}%
    \put(0.78336328,0.17087199){\color[rgb]{0,0,0}\makebox(0,0)[lb]{\smash{\ss $g_1$}}}%
    \put(0.67390955,0.25039659){\color[rgb]{0,0,0}\makebox(0,0)[lb]{\smash{\ss $f(\zeta$)}}}%
    \put(0.7121368,0.29370382){\color[rgb]{0,0,0}\makebox(0,0)[lb]{\smash{\ss $\zeta$}}}%
    \put(0.69321739,0.33093022){\color[rgb]{0,0,0}\makebox(0,0)[lb]{\smash{$\Vhat$}}}%
    \put(0.76915071,0.24640924){\color[rgb]{0,0,0}\makebox(0,0)[lb]{\smash{\sc $z_0$}}}%
    \put(0.83335456,0.38420664){\color[rgb]{0,0,0}\makebox(0,0)[lb]{\smash{\ss $\gamma(t_2)=\sigma(t_2')$}}}%
    \put(0.79701527,0.09957969){\color[rgb]{0,0,0}\makebox(0,0)[lb]{\smash{\ss $\gamma(t_i)=\sigma(t_1')$}}}%
  \end{picture}%
\endgroup%
\end{center}
\caption{\small The computation of the index for an arc of type (1). Left: A repelling fixed point was previously in $\partial V$ and now belongs to $\Vhat$. The arc $s_i$ is shown in blue and belongs to $\ov{U}_1$. Its image is shown in  light blue. Right:   A parabolic fixed point was previously in $\partial V$ and now is not in $\Vhat$. The arc $\gamma= s_i$ is shown in blue while its image $\sigma=f(s_i)$ is shown in  light blue. The dashed curves show the local modification of $\sigma$ so that $\gamma$ belongs to the closure of $\hat{U}_0$.}
\label{indexcase1}
\end{figure}

So in the {five} cases under consideration we obtain:
\begin{align*}
&\ind(f(s_i)-s_i,0)=1+n_i+\ind(s_i,P) &\text{ in case $(0)$; }\\
&\ind(f(s_i)-s_i,0)=n_i+\ind(s_i,P) &\text{ in case $(1)$; }\\
&\ind(f(s_i)-s_i,0)=1+\ind(s_i,P) &\text{ in case $(2)$; }\\
&\ind(f(s_i)-s_i,0)=\ind(s_i,P) &\text{ in cases {$(0')$ or }$(3)$.}
\end{align*}

Let $N_1$ be the number of repelling fixed points which were originally on $\partial V$, and $N_2$ be the number of arcs $s_i$ of type $(2)$. Observe that $N_1=\sum n_i$ (where the sum is over the indices for which $n_i$ is defined). Summing up, we have two different cases, giving the same final count. If $\delta\cap V=\emptyset$,  
\[
\begin{array}{rcl}
\ind(f(\GV)-\GV,0) &=&1+N_1+N_2+\sum_{i}\ind(s_i,P)\\
&=& 1+N_1+N_2 + \ind(\GV,P)\\
&=&1+N_1+N_2,
\end{array}
\]
where we have used that $P$ is contained in the unbounded connected component of $\C\setminus\GV$, so $\ind(\GV,P)=0$.

{If otherwise, $\delta\cap V \neq \emptyset$, then
\[
\begin{array}{rcl}
\ind(f(\GV)-\GV,0) &=&N_1+N_2+\sum_{i}\ind(s_i,P)\\
&=& N_1+N_2 + \ind(\GV,P)\\
&=&N_1+N_2 + 1,
\end{array}
\]
where we have used that $P$ is contained in the bounded connected component of $\C\setminus\GV$, so $\ind(\GV,P)=1$.
}

 We deduce from this that there is exactly one interior fixed point. Indeed,
\begin{itemize}
\item  there are exactly $N_1$ repelling fixed points which have been included in the region due to the modifications  described in  Lemma~\ref{Enlarging rfp}  (we do not need to count the parabolic ones, since they are no longer in $V'$ due to Lemma \ref{Enlarging pfp}); 
\item 
for each $s_i$ of type $(2)$ there is exactly one fixed point  in the interior $\Vhat$ which is the landing point of the unique fixed ray  asymptotically contained in that fundamental domain (notice that those rays must land alone or otherwise they would be part of a ray pair). 
\end{itemize}
This means that  there are $N_1+N_2$ fixed points in $\Vhat$ which are landing points of fixed rays, hence exactly one  fixed point is left. This fixed point must be interior because there are no more fixed rays left to land on it and because we assumed that $V$ (and hence $\Vhat$) contains no virtual fixed points at all.
 
 \subsubsection*{The case with multiple intersections}
 
 {We assumed at the beginning that  rays in $\partial V$ intersected  the circle $C_R$ (and therefore $r_\alpha$) in a single point. Otherwise, let us consider a modified region $\Vhat_R$ which includes the boundary rays only up to their {\em first} intersection $P$ with $r_\alpha$ (starting from the fixed point and moving to infinity). The counting for $\Vhat_R$ is then the one in the section above, independently of $R$. The region $\Vhat_R$ differs from $\Vhat$ by finitely many bounded sets $U_i$ whose boundary consists of    pieces of the ray and  pieces of $r_\alpha$, and each $U_i$ could have been either added or removed. 
So it is necessary and sufficient to show that none of the $U_i$ contains interior fixed points  for $R$ large enough.  By Theorem~\ref{Global counting theorem}, there are only finitely many interior fixed points for $f$, so they are all contained in a disk of radius say $R_1$.
We will show that for any fixed ray $g$ on $\partial V$, $R$ can be chosen large enough so that the $U_i$ whose boundary contains pieces of  $g$ do not intersect $D_{R_1}$, hence cannot contain interior fixed points. Since there are finitely many fixed rays on $\partial V$, up to taking  $R$ sufficiently large, we obtain that the number of interior fixed points in  $\Vhat_R$ equals the number of interior fixed points in $\Vhat$.

{Suppose the original ray $g$ is parametrized by $[0,\infty)$ where $g(0)$ is the fixed  point where it lands. Even after modification of the ray near the fixed point, the parametrization still makes sense for $t\geq t^*$, for some $t^*>0$. } For a point $z=g(t)$ we refer to the parameter  $t$ as the \emph{potential} of $z$. 
Since the  fixed rays on $\partial V$ land on a fixed point on one side, and tend to infinity on the other side, for any $R$ the ray  $g$ can  cross the circle $C_{R}$ only finitely many times. 
So we can let $T$ be the largest potential such that $g(T)$ intersects $D_{R_1}$. Let $T'$ be such that $f(g(T))=g(T')$. 

{Again because of convergence to infinity}  we can choose $R$ sufficiently large such that the first intersection (coming from the fixed point) with $g$ has potential $T''>T'$; for example, let $R>R_1+\diam g([T,T''])$. Since the ray is an injective curve hence an ordered set, and since $r_\alpha$ is a preimage of $C_R$, all points in $g\cap r_\alpha$ have potential {$t>T$}.  Thus every portion of $g$ in $\partial U_i$ has potential larger than $T$ at the extremities so has potential larger than $T$ everywhere. It follows that none of the $U_i$ bounded by $g$ and $r_\alpha$ can intersect $D_{R_1}$,  hence none of them contains interior fixed points.

\subsubsection*{The final counting}

  To conclude the proof of the Main Theorem we have to take into account the basic regions containing virtual fixed points. We will use  the global counting in Theorem \ref{Global counting theorem}  and  follow an argument analogous to the one in \cite[Proof of Theorem 3.3]{GM}.
 
Let $N=\#\FF$, and $n$ be the number of  landing  points of fixed rays which are asymptotically contained in some $F\in\FF$.  By Theorem \ref{Global counting theorem} there are $N+1$ fixed points, counted with multiplicity, which are landing points of rays asymptotically contained in some $F\in\FF$, or interior fixed points. Hence there are $N+1-n$ fixed points which therefore are  interior or virtual. 

On the other hand, let $m$ be the total number of basic regions and $v$ be the number of basic regions containing at least one virtual fixed point. It is easy to check that $m=N-n+1$. Hence  $m-v=N-n+1-v$ regions contain no virtual fixed point. Thus by the index counting above, there are $N-n+1-v$ interior fixed points, one inside each of these regions. Now let $v'\geq v$ be the actual number of virtual fixed points. Then the total number of fixed points which are interior or virtual is at least $N-n+1-v + v'$ (notice that {\em a priori} there could be interior fixed points  in the $v$ regions with at  least one virtual fixed point). 

Putting together both computations we obtain that 
\[
  N+1-n \geq N-n+1-v + v' .
\]
Hence $v'\leq v$ from which we deduce that $v'=v$. This implies that there is exactly one virtual fixed point in each of the $v$ regions which had at least one of them. But moreover, all fixed points have been accounted for, so these $v$ regions contain no interior fixed point.

\section{Proof of corollaries D and E}\label{Corollaries}
As explained in the introduction, Theorem C is an immediate consequence of the Main Theorem,  replacing $f$ by $f^p$ and observing that class $\BBt$  is closed under composition. 
In this section we prove some of the many corollaries of Theorem C. We recall that the period is not assumed to be exact.

\begin{cor}\label{Corollari}
If $f$ is a function in $\Bt$ whose periodic rays land,
\begin{enumerate}
\item There are only finitely many interior periodic  points of any given period.  In particular, there are only finitely many non-repelling cycles of any given period.
\item Any two periodic Fatou components can be separated by a periodic ray pair. 
\item There are no Cremer   periodic points on the boundary of periodic Fatou components.
\item If $z_0$ is a  parabolic point, for each repelling petal there is at least  one  ray  which lands at $z_0$ through that repelling petal. In particular, any two of its attracting petals are separated by a ray pair of the same period {as the virtual fixed point associated to the petal}.  
\item For any given period $p$, there are only finitely many (possibly none) periodic points of period $p$ which are landing points of more than one periodic ray. None of them is the landing point of infinitely many rays of the same period.
\end{enumerate}
\end{cor}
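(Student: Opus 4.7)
The plan is to deduce all five items from Theorem C applied to $f^p$, together with Proposition A applied to $f^p$ for item (5); this is legal because $\BBt$ is closed under composition. The engine is the uniqueness clause of the Separation Theorem: each basic region for $f^p$ contains exactly one interior or virtual $p$-periodic point, so whenever two distinct dynamical features each force the presence of such a point in their basic region, they cannot coexist in the same basic region. Item (1) is then immediate, since non-repelling cycles produce either interior $p$-periodic points (if (super)attracting, Siegel, or Cremer) or virtual $p$-periodic points (if parabolic, one per immediate basin), and there are only finitely many basic regions for $f^p$. For item (2), every periodic Fatou component lies in a single basic region for $f^p$ (rays lie in the Julia set, hence are avoided by Fatou components) and produces exactly one interior or virtual $p$-periodic point, namely the non-repelling fixed point it contains or, in the parabolic case, the basin itself; two distinct periodic Fatou components thus cannot share a basic region and are therefore separated by a $p$-periodic ray pair.

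For item (3), a Cremer $p$-periodic point $z_0$ is interior and lies on no periodic ray, so it sits in some open basic region $V$ for $f^p$; if $z_0 \in \partial U$ for a periodic Fatou component $U$, then by the argument for (2) we have $U\subset V$, so $V$ would contain both $z_0$ and the interior or virtual $p$-periodic point associated to $U$, contradicting Theorem C. For item (4), I pass to $f^p$ so that $z_0$ is a parabolic fixed point and each attracting petal is a virtual fixed point; two adjacent attracting petals $P_i, P_{i+1}$, separated by a single repelling petal $W$, cannot lie in the same basic region, so some ray pair separates them. The separating rays must both land at $z_0$, for otherwise a small punctured neighborhood of $z_0$ would reconnect $P_i$ and $P_{i+1}$ in the complement of the graph of $p$-periodic rays; since rays land at a parabolic point only through repelling petals and $W$ is the only one between $P_i$ and $P_{i+1}$, a ray lands through $W$. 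Finally for item (5), Proposition A applied to $f^p$ furnishes only finitely many $p$-periodic rays that do not land alone at repelling $p$-periodic points, so only finitely many $p$-periodic points can be landed by $\geq 2$ rays of period $p$, and in particular none can be landed by infinitely many.

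The main obstacle, as I see it, is the small topological verification in (3) and (4) that the Cremer point (resp.\ the parabolic fixed point and the adjacent attracting petals) really sits in the same open basic region as the neighboring Fatou object, rather than being walled off by a ray. The two ingredients that should make this rigorous are that interior and virtual periodic points lie in the complement of the graph of $p$-periodic rays (they are not on any such ray), and that a periodic Fatou component, being a connected open subset of the Fatou set, meets no periodic ray and is therefore contained in a single basic region for $f^p$.
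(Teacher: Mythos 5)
Your overall strategy is exactly that of the paper: deduce everything from the uniqueness clause of Theorem C applied to $f^p$, with Proposition A supplying finiteness in item (5). Items (1), (2), (3), and (5) are handled correctly and in essentially the same way as the paper's proof, and your spelled-out topological verification for (3) (that a Cremer point on $\partial U$ would have to share the open basic region containing $U$, because it lies on no $p$-periodic ray and basic regions are open) is a sound way of making precise what the paper compresses into ``as in the previous case.''

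The gap is in item (4). Your argument produces a ray through the repelling petal $W$ by pitting the two \emph{distinct} adjacent attracting petals $P_i$, $P_{i+1}$ against each other: they lie in different basic regions, a ray pair separates them, and that ray pair must come in along $W$. This only works when $m\geq 2$, i.e.\ when $z_0$ has at least two attracting directions. When the parabolic fixed point of $f^p$ has multiplicity two ($m=1$), there is a single attracting direction and a single repelling petal, so $P_i=P_{i+1}$ and the separation argument is vacuous. The paper handles this case with a different one-line argument which your proposal omits: if no $p$-periodic ray landed at $z_0$, then $z_0$ would be an interior fixed point of $f^p$, and since $z_0$ lies on the boundary of the (unique) immediate basin, which is a virtual fixed point of $f^p$, the basic region containing that basin would contain two distinct objects (one interior and one virtual fixed point), contradicting Theorem C; hence some ray lands at $z_0$, necessarily through the unique repelling petal. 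You need to add this case. A smaller point: you write ``each attracting petal is a virtual fixed point,'' but the virtual fixed point is the immediate \emph{basin} associated to an attracting direction, not the petal itself (a petal is merely a subset of it). The argument survives once phrased in terms of the basins $B_i \supset P_i$, since $B_i\neq B_{i+1}$ is what forces distinct basic regions.
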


\begin{proof}[Proof of Corollary~\ref{Corollari}]
\begin{enumerate}
\item By Theorem C, for each $p$ there are finitely many basic regions for $f^p$, and each of them contains at most one interior fixed  point of $f^p$ which is an interior periodic point of period $p$ (not necessarily exact) for $f$.
\item Let $U_1, U_2$ be two periodic Fatou components of least common period $p$.  Since $f\in\BB$, there are no Baker domains \cite{EL}, hence the components are either  basins of attraction of attracting or parabolic $p-$periodic points, or Siegel disks. Then  $\ov{U_1}$ and $\ov{U_2}$ each contain  a point $z_1,z_2$ respectively which is fixed under $f^p$.
If both $z_1$ and $z_2$  are in the interior of their Fatou component, they are interior periodic points, so they are separated by  a $p-$periodic ray pair  by Theorem C. As dynamic rays cannot intersect Fatou components, and Fatou components are connected sets, it follows that the same ray pair separates $U_1$ from $U_2$.
If instead at least one of them is on the boundary, say $z_1$, then $z_1$ is a parabolic point and $U_1$ is a virtual fixed point for $f^p$.  In particular, by Theorem C, there are no other interior $p-$periodic points or virtual fixed points of $f^p$ in  the basic region $V$ containing $U_1$, so $U_2$ is separated from $U_1$ by two rays on the boundary of $V$ (observe that $U_2$ could also be another virtual fixed point of $f^p$ attached to the same parabolic point if $z_1=z_0$).   

\item Observe that Cremer points are interior periodic points, since by the Snail Lemma (see Lemma 16.2 in \cite{Mi}) no periodic rays can land at Cremer points. Hence they can be separated from any other interior periodic point or periodic Fatou component as in the previous case.

\item If $z_0$ is a multiple fixed point for $f^p$ which has only one  immediate basin attached, then there is at least a  fixed ray of $f^p$ landing at $z_0$, otherwise $z_0$ would be an interior fixed point and would be contained in the same basic region as its attracting petal, contradicting  Theorem C. If $z_0$ has more than one virtual fixed point of $f^p$ attached, and there were a repelling petal which does not contain any fixed ray for $f^p$ landing at $z_0$, then its two adjacent immediate basins (virtual fixed points)  would be contained in the same basic region for $f^p$, contradicting again Theorem C. 

\item If not, there would be infinitely many basic regions for $f^p$, contradicting Proposition A.
\end{enumerate}
\end{proof}

From the fact that Fatou components can be separated by periodic ray pairs we obtain the following additional corollary.
Given an invariant Siegel disk $\Delta$, we say that $U$ is a hidden component of $\Delta$ if $U$ is a bounded connected component of $\C\setminus\overline{\Delta}$. 
The proof of the next corollary follows the outline of Lemmas 2, 3 and 10 in \cite{CR}; see also \cite{Ro}. 
\begin{cor}
If $f\in\BBt$ and all periodic rays land, any hidden component of a bounded invariant Siegel disk is either a wandering domain or preperiodic to the Siegel disk itself.
\end{cor}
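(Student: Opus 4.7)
The plan is to show that a hidden component $U$ of $\Delta$ is necessarily a Fatou component, that its forward orbit stays in $\{\Delta\}\cup\{\text{hidden components of }\Delta\}$, and then to invoke the Separation Theorem (Corollary~D part~2) to rule out any periodic hidden component different from $\Delta$.

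First, following the idea of Lemma~2 in \cite{CR}, I would argue that $U$ is a Fatou component. Since $U$ is a connected component of $\C\setminus\overline{\Delta}$, its boundary satisfies $\partial U\subset\partial\Delta\subset J(f)$. A normal families argument using boundedness of $U$ together with the fact that $\Delta$ is a nonempty open set omitted by all iterates $f^n|_U$ gives $U\subset F(f)$, and by maximality among connected open subsets of $\C\setminus\overline{\Delta}$, $U$ equals the whole Fatou component containing it. Next, by induction I would show that each $U_n:=f^n(U)$ is either $\Delta$ or a hidden component: invariance of $\Delta$ gives $f(\partial\Delta)\subset\partial\Delta$, so $\partial U_{n+1}=f(\partial U_n)\subset\partial\Delta$; compactness of $\overline{U_n}$ (the continuous image of $\overline{U_{n-1}}$) makes $U_{n+1}$ bounded, and combined with $U_{n+1}$ being a Fatou component this forces it to be either $\Delta$ itself or a bounded connected component of $\C\setminus\overline{\Delta}$.

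If $U_k=\Delta$ for some $k$, then $U$ is preperiodic to $\Delta$ and we are done. Otherwise, I argue by contradiction that $U$ cannot be preperiodic at all: were it so, some iterate $V:=U_k$ would be a periodic hidden component with $V\neq\Delta$, and by Corollary~D part~2 there would exist a periodic ray pair $\gamma=R_1\cup R_2\cup\{z\}$ such that $\gamma\cup\{\infty\}$ is a Jordan curve in $\chat$ splitting $\chat$ into two open discs $D_1\supset\Delta$ and $D_2\supset V$. Combining $\partial V\subset\partial\Delta\subset\overline{D_1}$ with $\partial V\subset\overline{V}\subset\overline{D_2}$ gives $\partial V\subset\gamma\cup\{\infty\}$, and boundedness of $V$ then forces $\partial V\subset R_1\cup R_2\cup\{z\}$.

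The main obstacle is to turn this topological constraint into a contradiction. I would first observe that every hidden component is simply connected in $\chat$: the complement in $\chat$ of a hidden component is the union of $\overline{\Delta}$ with the remaining components of $\chat\setminus\overline{\Delta}$, each of which has closure meeting $\overline{\Delta}$ along $\partial\Delta$, so the complement is connected. Therefore $\partial V$ is a connected continuum, and as a connected subset of the tree $R_1\cup R_2\cup\{z\}$ it must be an arc (or a single point). But an arc in $\C$ does not separate the plane, whereas $\partial V$ must separate the bounded nonempty open set $V$ from $\infty$, giving the desired contradiction. Hence $U$ is not preperiodic, and therefore $U$ is a wandering domain (recall that class $\BB$ excludes Baker domains).
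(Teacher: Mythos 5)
Your proposal follows the same overall scheme as the paper's proof: show that $U$ is a bounded Fatou component, show the forward orbit of $U$ stays within $\{\Delta\}\cup\{\text{hidden components}\}$, and invoke part~2 of Corollary~D to rule out a periodic hidden component $V\neq\Delta$. The final contradiction, however, is reached by a genuinely different route, and there is one step worth flagging.

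\emph{The normality argument.} You write that $U\subset F(f)$ follows because ``$\Delta$ is a nonempty open set omitted by all iterates $f^n|_U$.'' This omission is \emph{not} a priori true, and in fact it fails precisely in the case that $U$ is preperiodic to $\Delta$ (then $f^n(U)=\Delta$ for some $n$). Even ruling that case out, establishing the omission would require already knowing that the forward images $f^n(U)$ stay out of $\Delta$, which in turn presupposes the boundary analysis you only carry out afterwards. The paper's argument avoids this circularity entirely: since $\partial U\subset\partial\Delta$ and $\partial\Delta$ is compact and forward invariant, $\{|f^n|\}$ is uniformly bounded on $\partial U$, hence on the bounded set $U$ by the Maximum Principle, so $\{f^n|_U\}$ is normal by Montel. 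You should replace the omission argument by this one (the phrase ``boundedness of $U$'' in your proposal suggests you may have had something like this in mind, but as stated the argument has a gap).

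\emph{The final contradiction.} Here your route differs from the paper's and is a valid alternative. Both arguments reduce to $\partial V\subset\gamma\cap\partial\Delta$ where $\gamma=R_1\cup R_2\cup\{z\}$ is the separating ray pair. The paper then observes (somewhat implicitly) that $\gamma\setminus\{z\}$ consists of escaping points while $\partial\Delta$, being bounded and forward invariant, contains none, so $\gamma\cap\partial\Delta\subset\{z\}$ and $\partial V$ is at most a single point --- impossible. Your argument instead establishes that $\C\setminus V$ is connected (hence $V$ is simply connected in $\chat$), so $\partial V$ is a continuum; as a compact connected subset of $\gamma\cong\R$ it is a point or a closed arc, which cannot separate the plane, contradicting the fact that $\partial V$ separates the nonempty bounded open set $V$ from $\infty$. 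Your version is purely topological and does not use that rays consist of escaping points, at the modest cost of proving the simple-connectivity of hidden components and invoking the Jordan arc non-separation theorem. Either is acceptable; once the normality step is repaired, your proof is correct.
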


\begin{proof}
Observe  that hidden components are bounded Fatou components. In fact, $\partial\Delta$ is bounded and forward invariant, and $\partial U\subset \partial\Delta$, so by the Maximum Principle and Montel's Theorem $U$ is contained in the Fatou set.
Suppose that there is a hidden component $U$ which is not preperiodic to $\Delta$.
Observe that  $\partial f( U)\subset f(\partial U)$ because $f$ is holomorphic hence open, and $f(U)\cap \partial\Delta=\emptyset$ because $f(U)\subset F(f)$ and $\partial\Delta\subset J(f)$. Also $f(U)$ is bounded and connected because it is the image of a bounded connected set, and its boundary is contained in  $\partial\Delta$, so $f(U)$ is also a  hidden component of $\Delta$. Hence, if $U$ is not a  wandering domain, it is preperiodic to some periodic Fatou component $\widetilde{U}$ which is a hidden component of $\Delta$; by part $4.$ of Corollary \ref{Corollari}, unless $\widetilde{U}=\Delta$, they can be separated by two periodic rays landing together, hence $\partial\widetilde{U}\cap \partial\Delta$ is at most a  single point- which is not possible as $\partial\widetilde{U}\subset \partial\Delta$. 
\end{proof}

\end{document}